\documentclass{article}
\usepackage{amsfonts}
\usepackage{amsmath}
\usepackage{enumerate}
\usepackage{tikz}
\usetikzlibrary{matrix,arrows,positioning,decorations.markings,decorations.pathmorphing}

\newtheorem{theorem}{Theorem}

\newtheorem{definition}[theorem]{Definition}

\newtheorem{lemma}[theorem]{Lemma}

\newtheorem{proposition}[theorem]{Proposition}
\newtheorem{remark}[theorem]{Remark}

\newenvironment{proof}[1][Proof]{\noindent\textbf{#1.} }{\ \rule{0.5em}{0.5em}}

\begin{document}

\title{Reduction (by stages) in the whole Lagrange--Poincar\'{e} category}
\author{Miguel \'{A}ngel Berbel \and Marco Castrill\'{o}n L\'{o}pez}
\date{}
\maketitle

\begin{abstract}
We complete the reduction scheme in the whole $\mathfrak{LP}$ category, introduced in \cite{CMR} to perform Lagrangian reduction by stages. We answer affirmatively the open question of whether reduction can be done in the whole category and analyze the Noether theorem on $\mathfrak{LP}$-bundles, the relationship with Hamiltonian reduction by stages and the geometric aspects of the definition of this category.
\end{abstract}

\section{Introduction}
Symmetries are a central topic in Mechanics. They provide tools to simplify the dynamics of the problem under study and they are associated to conservation laws. In this sense, the geometric outlook provided by Geometric Mechanics has been successfully exploited to build a framework within the language of actions of Lie groups on manifold (just to mention some essential pieces of literature, we refer the reader to \cite{abraham, arnold, MRbook} as well as their references). The key point is the introduction of the quotient of the phase spaces of the dynamical problems, a picture that is commonly known as \emph{reduction}. For example, in the Lagrangian formulation of Mechanics, we consider a Lie group $G$ of symmetries acting (freely and properly) on the configuration manifold $Q$ so that a reduced Lagrangian and variational principle is defined in the the quotient $(TQ)/G$ under the lift of the action on the tangent bundle. A similar situation is defined on the Hamiltonian side of the picture, where the Poisson structure and the Hamiltonian are projected to $T^*Q/G$.

There are many occasions where the Lie group $G$ of symmetries of the system contains transformations of remarkably distinct nature. More precisely, in some occasions, there is a normal subgroup of symmetries $N$ encoding one type of information different (in terms of physical or mathematical reasons) to the remaining symmetries given in $G/N$. The action of the global symmetry group is split and the reduction process can be organized as a concatenation of simpler steps, first by $N$ and then by the quotient $G/N$. A prototypical example is present in the geometric models of underwater vehicles with rotors (see \cite{LM}). Nonetheless, since the quotient of a tangent or cotangent bundle is not necessarily a bundle of that type, the iteration of the reduction process implies reducing Lagrangians or Hamiltonians defined in a different kind of phase spaces, included in a wider category of phase spaces that, in particular, contains tangent or cotangent bundles.

This problem was tackled (for the Lagrangian side) in the celebrated article \cite{CMR}, where the authors introduced a new category $\mathfrak{LP}$ of Lagrange--Poincar\'e bundles that is stable under reduction of actions of Lie groups. In this situation, we can perform reduction several times and, if we reduce by a $N$ and afterward by $K=G/N$, the final result is equivalent to a direct reduction by $G$, provided some auxiliary connections used along the process are conveniently chosen. Apparently, the double reduction seems to be a longer way but, with it, one keeps control of the double nature of the symmetries mentioned above, which can be specially useful in the study of stability or conservation laws, for instance. Since then, many articles have followed and applied that seminal work (for example, some few references are \cite{Bl}, \cite{Fer}, \cite{Gay}, \cite{GS}). 
 
However, the work in \cite{CMR} restricted its main results to the subcategory $\mathfrak{RI}$ of Lagrange--Poincar\'e bundles coming from reducing an initial tangent bundle, so that the question whether the reduction procedure could be done in the whole category was left as an open problem. In this article, we both answer affirmatively to that question and complete some other aspects of this theory.

In this paper, we complete the reduction scheme in the whole $\mathfrak{LP}$ category. In addition, we provide some additional geometrical insight about the elementes of the category and the structure of their variational equations. On one hand, we provide a characterization of the Lie bracket on the sections of $\mathfrak{LP}$-bundle in geometric terms. This is particularly useful to understand the naturality of that bracket. On the other hand, the Noether theorem on $\mathfrak{LP}$-bundles is analyzed. Here, the notion of the conservation law is translated into a drift equation, a fact consistent with other theories with non-holonomic constraints, but with supplementary property that the drift equations fits into the variational equations of the reduced $\mathfrak{LP}$-bundle when reduction is performed (see a similar situation in Field Theories in \cite{marco}). We also go deeper in the relationship between reduction in the Lagrange-Poincar\'e category and Poisson reduction with a more detailed analysis of the relationship of the $\mathfrak{LP}$ Lie bracket with the one in Hamiltonian reduction by stages.

We complete the work with some examples. Even though \cite{CMR} already contains an indication of examples outside $\mathfrak{RI}$ (in terms of non-orientable manifolds), we provide a bigger class by using non-trivial flat bundles. As we know, these bundles play an important role in many geometrical situations. Choosing a convenient Lagrangian, we are able to include dynamics as simple as parallel transport in the purely $\mathfrak{LP}$ developed in the article. Finally, we also present an alternative way of studying systems depending on parameters to the one presented, for example, in \cite{CM87,HMR98A}.

\section{Preliminaries}

\subsection{Fiber bundles} \label{req-bundles}

Let $G$ be a Lie group acting freely and properly on a manifold $Q$. The action $\rho:G\times Q\to Q$ will be assumed to be on the left, although all the results in this article can be stated in a similar way for right actions. The natural projection $\pi :Q\rightarrow Q/G$ is a principal $G$-bundle such that the
fibers are exactly the orbits of the action. Therefore, if we denote by
\begin{equation*}
V_{q}Q=\{v\in T_{q}Q|d_q\pi (v)=0\},\qquad q\in Q,  \label{verticales}
\end{equation*}
the vertical tangent vectors, there is a natural identification between the vertical subbundle $VQ\subset TQ$ and $Q\times
\mathfrak{g}$ given by
$$
Q\times \mathfrak{g}\ni (q,\xi )\mapsto \xi ^{Q}_{q}=\left. \frac{d}{dt}%
\right\vert _{t=0}\exp (t\xi )\cdot q\in TQ,
$$
where the dot stands for the action and $\mathfrak{g}$ is the Lie algebra of
$G$. We denote by $\tilde{\mathfrak{g}}\rightarrow Q/G$ the adjoint bundle to $Q\to Q/G$, that is, the associated bundle $(Q\times \mathfrak{g})/G$ by the adjoint action of $G$ on $\mathfrak{g}$. Its elements will be denoted by $[q,\xi]_{G}$, $q\in Q$, $\xi \in \mathfrak{g}$. We recall that $\tilde{\mathfrak{g}
}\rightarrow Q/G$ is a Lie algebra bundle, that is, a vector bundle equipped with a fiberwise Lie algebra given by
$$\lbrack \lbrack q,\xi _{1}]_{G},[q,\xi _{2}]_{G}]=[q,[\xi _{1},\xi
_{2}]]_{G},\qquad \lbrack q,\xi _{1}]_{G},[q,\xi _{2}]_{G}\in
\tilde{\mathfrak{g}}_{x},x=\pi (q).$$
There is a bijection between sections $\bar{\xi}:Q/G\rightarrow \tilde{\mathfrak{g}}$ of the adjoint bundle and $\pi $-vertical $G$-invariant
vector fields $X_{\bar{\xi}}$ on $Q$ by setting $(X_{\bar{\xi}})_{q}=\xi
_{q}^{Q}$ for any $q\in Q$ where $\bar{\xi}(x)=[q,\xi ]_{G}$ and $x=\pi (q)$.

A principal connection $A$ on $Q\rightarrow Q/G$ is a $\mathfrak{g}$-valued
$1$-form on $Q$ such that $A(\xi _{q}^{Q})=\xi $, for any $\xi \in
\mathfrak{g}$, $q\in Q$, and $\rho_{g}^{\ast }A=\mathrm{Ad}_{g}\circ A$,
where $\rho_{g}:Q\to Q$ denotes the (left) action by $g\in G$. The form $A$
splits the tangent spaces as $T_{q}Q=H_{q}Q\oplus V_{q}Q$, for all $q\in Q$, where
$$H_{q}Q=\ker A_{q}=\{v\in T_{q}Q|A_{q}(v)=0\},\qquad q\in Q,$$
is called the horizontal subspace. The collection of all $H_qQ$, constitute a distribution $HQ$. Note that each $H_{q}Q$ can be identified with the tangent space, $T_x(Q/G)$, $x=\pi(q)$, through $d_q\pi$.

The curvature of a connection $A$ is a $\mathfrak{g}$-valued $2$-form given by $$B(v,w)=dA(\mathrm{Hor}(v),\mathrm{Hor}(w)),$$ where $v,w\in T_qQ$ and $\mathrm{Hor}(v)$ is the horizontal part of $v$ according to the decomposition $T_{q}Q=H_{q}Q\oplus V_{q}Q$. Intuitively, the curvature is the obstruction to the Frobenius integrability of  $HQ$. We note that the curvature can be also regarded as a 2-form on $Q/G$ with values in the adjoint bundle as
$$ \tilde{B}(X,Y)=[q,B(X^h,Y^h)]_G,$$ for any $X,Y\in T_x(Q/G)$, where $X^h,Y^h \in H_qQ$, is the horizontal lift with respect to the connection to any point $q$ of the fiber of $x$.

Given a vector bundle $\tau:V\to Q$, an affine connection is a map
\begin{equation*}
\begin{split}
\nabla:\mathfrak{X}^{\infty}(Q)\times \Gamma(V) &\to \Gamma(V) \\
(X,s) &\mapsto \nabla_Xs,
\end{split}
\end{equation*}
which is $\mathcal{C}^{\infty}(Q)$-linear in the first entry and for all $s_1,s_2\in \Gamma(V)$, $X \in \mathfrak{X}^{\infty}(Q)$ and $f_1,f_2 \in \mathcal{C}^{\infty}(Q)$ satisfies,
$$\nabla_{X}(f_1s_1+f_2s_2)=X[f_1]s_1+f_1\nabla_{X}s_1+X[f_2]s_2+f_2\nabla_{X}s_2,$$
where $X[f]$ denotes the derivative of $f$ in the direction of  the field $X$. An affine connection $\nabla$ splits $T_vV=H_vV\oplus V_vV$, $v\in V$, with $H_vV=ds(T_q Q)$ for a (local) section $s:Q\to V$ such that $(\nabla _X s)_q=0$ for all $X\in T_q Q$. Furthermore, an affine connection defines a covariant derivative for curves $v(t)\in V$ as
 $$\frac{Dv(t)}{dt}=\nabla _{\dot{q}(t)}s,$$ where $q(t)=(\tau \circ v)(t)$ and $s$ is a local section around a neighborhood of $q(t)$ such that $v(t)=s(q(t))$. It can be proved that there is a bijection between affine connections and covariant derivatives on $V$.

We denote by $\Omega^p(Q,V)=\Gamma(\bigwedge^p T^*Q\otimes V)$ the space of $V$-valued $p$-differential forms on $Q$. Unlike real-valued differential forms on $Q$, the space
\[
\Omega (Q,V)=\bigoplus_p\Omega^p(Q,V),
\]
of $V$-valued differential forms on $Q$ is not an algebra with respect to the wedge product. Yet, it is a $\Omega (Q)$-module with the following wedge product
$$\omega\wedge\eta(X_1,\dots,X_{p+q})=\sum_\sigma \mathrm{sign}(\sigma)\omega(X_{\sigma(1)},\dots,X_{\sigma(p)})\eta(X_{\sigma(p+1)},\dots, X_{\sigma(p+q)}),$$
where $\omega\in \Omega^p (Q),\eta\in \Omega^q (Q,V)$ and the sum is over all permutations $\sigma$ of $p+q$ elements.

Given an affine connection $\nabla$, there is a unique operator
\[
d_{\nabla}:\Omega^{p}(Q,V) \to \Omega^{p+1}(Q,V),
\]
such that $d_{\nabla}$ acting on $s\in\Omega^{0}(Q,V)=\Gamma(V)$ coincides with $\nabla s\in \Omega^{1}(Q,V)$, and satisfies the Leibniz identity $d_{\nabla}(\omega\wedge\eta)=(d\omega)\wedge\eta+(-1)^{\mathrm{deg}(\omega)}\omega\wedge(d_{\nabla}\eta),\omega\in \Omega^{\bullet} (Q),\eta\in \Omega^q (Q,V)$.
In fact, there is a one to one correspondence between affine connections and linear operators $d_{\nabla}$ defined on
$\Omega^{\bullet}(Q,V)$, increasing the degree by one and satisfying the Leibniz identity. As usual, there is a an intrinsic formula given as
\begin{align*}
d_\nabla(\eta)(X_1,\dots,X_{q+1})=&\sum_{i<j}(-1)^{(i+j)}\eta([X_i,X_j],X_1,\dots,\hat{X}_i,\dots,\hat{X}_j,\dots,X_{q+1})\\&+\sum_{i=1}^{q+1}(-1)^{(i+1)}\nabla_{X_i}(\eta(X_1,\dots,\hat{X}_i,\dots,X_{q+1})),
\end{align*}
where $\hat{X}_i$ means that $X_i$ is omitted. In general, $d_{\nabla}^2=d_{\nabla}\circ d_{\nabla}\neq 0$. The curvature of an affine connection $k_{\nabla}\in \Omega^2(Q,\mathrm{End}(V))$, defined as
$$k_{\nabla}(X,Y)s=\nabla_X\nabla_Ys-\nabla_Y\nabla_Xs-\nabla_{[X,Y]}s$$ for all $X,Y\in\mathfrak{X}(Q)$ and all $s\in\Gamma(V)$, can be seen as the failure to get that property since
$$d_{\nabla}^2(s)(X,Y)=k_{\nabla}(X,Y)(s).$$

Let $E=Q\times_G F$ be a bundle associated to the $G$-principal bundle $\pi:Q\to Q/G$ and defined by the action of $G$ on a manifold $F$. A principal connection  $A$ on $\pi:Q\to Q/G$ defines a horizontal distribution $HE\subset TE$ complementary to the vertical bundle
\[
T_{[q,f]_G}E=V_{[q,f]_G}E\oplus H_{[q,f]_G}E, \qquad [q,f]_G\in E,
\]
by the condition
\[
H_{[q,f]_G}E=d_q\psi_f(H_qQ),
\]
where $\psi_f: q\in Q\mapsto [q,f]_G\in E.$ In case $E$ is a vector bundle, the distribution hereby obtained gives rise to an affine connection on $E$, and hence, a covariant derivative.

Given a vector bundle $V\to Q$, an action $\rho$ of a Lie group $G$ on $V$ is called a vector bundle action if the maps $\rho_g:V\to V$, $g\in G$, are  vector bundle isomorphisms and the action induced in the base is free and proper. The rules $$[v_q]_G+[w_q]_G=[v_q+w_q] \text{ and } \lambda [v_q]_G=[\lambda v_q]_G,$$ where $[v_q]_G,[w_q]_G\in V/G$ stand for the equivalence classes of $v_q,w_q\in V_q$ and $\lambda \in \mathbb{R}$, define a vector bundle structure on $V/G$ over $Q/G$ with projection $\tau _G:V/G\to Q/G$ given by $\tau _G([v]_G)=[\tau (v)]_G$. The projection  $\pi_{V,G}:V\to V/G$ is a surjective vector bundle homomorphism covering $\pi:Q\to Q/G$
\begin{center}
\begin{tikzpicture}[scale=1.5] \label{quo.vect.ble}
\node (0) at (0,1) {$V$};
\node (A) at (1.5,1) {$V/G$};
\node (B) at (1.5,0) {$Q/G$};
\node (C) at (0,0) {$Q$};
\draw[->,font=\scriptsize,>=angle 90]
(0) edge node[above]{$\pi_{V,G}$} (A)
(0) edge node[right]{$\tau$} (C)
(C) edge node[above]{$\pi$} (B)
(A) edge node[right]{$\tau_G$} (B);
\end{tikzpicture}
\end{center}
whose restriction to each fiber is a linear isomorphism. There is a bijection between the space $\Gamma (V/G)$ of sections of $V/G$ and the space $\Gamma ^G (V)$ of $G$-invariant sections of $V$. If $\Gamma(V)$ has a Lie algebra structure invariant by the action, $\Gamma^G(V)$ is a Lie subalgebra which makes $\Gamma(V/G)$ a Lie algebra.

\subsection{Lagrange--Poincar\'e reduction} \label{usuallp}

The action defined by a Lagrangian function $L :TQ \to \mathbb{R}$ on the set $\Omega(Q;q_0,q_1)$ of curves $q(t)$ with endpoints $q_0,q_1\in Q$ is defined as
\[
S(q(t))=\int _{t_0}^{t_1} L(\dot{q}(t))dt,
\]
where $\dot{q}(t)$ denotes the natural lift of $q(t)$ to $TQ$. A curve $q(t)\in \Omega(Q;q_0,q_1)$ is said to be critical with respect to the action if for any smooth deformation $\{q_\lambda(t)\}_{\lambda\in \mathbb{R}}$ of curves in $\Omega(Q;q_0,q_1)$, with  $q_0(t)=q(t)$, we have
\[
\left. \frac{d}{d\lambda}\right|_{\lambda =0} S(q_\lambda (t))=0.
\]
The vector field $\delta q= d/d\lambda |_{\lambda =0} q_\lambda$ is called a variation, and the set of them is denoted by $\Delta_q$. Note that any vector field along $q(t)$ with vanishing endpoints is a variation. Critical curves satisfy the renowned Euler--Lagrange equations, which in standard coordinates $(q^1,\dots,q^n,\dot{q}^1,\dots,\dot{q}^n)$ on $TQ$ can be written as
\[
\frac{\partial L}{\partial q^i}(q,\dot{q})  -\frac{d}{dt}\left(\frac{\partial L}{\partial \dot{q}^i}(q,\dot{q})\right)=0, \qquad \forall i=1,\dots , n.
\]

It is sometimes useful to regard the tangent bundle $TQ$ as the space $J^1(\mathbb{R},Q)$, the jet bundle of classes of curves from $\mathbb{R}$ to $Q$. The class of a curve $\gamma(t)$ is denoted by $[\gamma(t)]^{(1)}$ and the curve $\dot{q}(t)$ in $TQ$ is just the curve $[q]^{(1)}(t)$.
Similarly, for $k\geq 2$, we understand the $k$-th order tangent bundle $T^{(k)}Q$ as the $k$-order jet bundle $J^k(\mathbb{R},Q)$. The lift of a curve $q(t)$ to $T^{(k)}Q$ is denoted by $[q]^{(k)}(t)$.

Given an affine connection $\nabla$ on $TQ\to Q$, the Euler--Lagrange operator is the bundle map
\begin{align*}
\mathcal{EL}(L):T^{(2)}Q&\to T^*Q \\
[q]^{(2)}&\mapsto \frac{\partial L}{\partial q}(\dot{q}(t))-\frac{D}{dt}\left(\frac{\partial L}{\partial \dot{q}}(\dot{q}(t))\right),
\end{align*}
where $\partial L /\partial q$ and $\partial L /\partial \dot{q}$ are respectively the horizontal and vertical part of $\partial L /\partial[q]^{(1)}$ with respect to $\nabla$, and $\frac{D}{dt}$ is the covariant derivative defined by $\nabla$ on the dual bundle $T^*Q$. It is not hard to prove that the Euler--Lagrange operator is independent of the chosen $\nabla$. In addition, it provides an intrinsic description of the local Euler--Lagrange equations above so that a curve $q(t)\in \Omega (Q;q_0,q_1)$ is critical if and only if $\mathcal{EL}(L)([q]^{(2)})=0$.

We now consider a Lie group $G$ acting freely and properly on the configuration manifold $Q$ as well as the natural induced action on $TQ$. If we assume that the Lagrangian $L$ is invariant with respect to this action,it drops to a function
\[
l:TQ/G \to \mathbb{R}
\]
called the reduced Lagrangian. The unreduced variational problem can also be dropped to the quotient as follows. On one hand, the set of curves are $[\dot{q}]_G$, for $q\in \Omega(Q;q_0,q_1)$, or using jet notation $[q^{(1)}]_G$. On the other, the set of admissible variations, denoted $\Delta^{\ell}_q$, are the projection $[\delta\dot{q}]_G=[\delta q^{(1)}]_G$ of variations $\delta q \in \Delta_q$. This process is known as the Lagrange--Poincar\'e reduction. However, the projection of these curves and variations requires a better understanding of the bundle $(TQ)/G\to Q/G$. For any choice of a principal connection $A$ on $Q\to Q/G$ we define the map
\begin{equation*}
\begin{split}
\alpha_A:TQ/G& \to T(Q/G)\oplus \tilde{\mathfrak{g}} \\
[v_q]_G &\mapsto T\pi(v_q)\oplus [q,A(v_q)]_G. \\
\end{split}
\end{equation*}
This map is a vector bundle diffeomorphism. Given a curve $q\in \Omega(Q;q_0,q_1)$, we consider the curves
\[
\dot{x}(t)=T\pi(\dot{q}(t)),\qquad \bar{\xi}(t)=[q(t),A(\dot{q}(t))]_G,
\]
so that the admissible curves $[\dot{q}]_G(t)=[\dot{q}(t)]_G$ of the reduced variational problem on $TQ/G$ are identified with $\dot{x}(t)\oplus \bar{\xi}(t)$ via $\alpha_A$. As the notation suggests, $\dot{x}(t)$ coincides with the tangent lift of $x(t)=\pi(q(t))$. In other words, the set of curves for the variational problem of the reduced Lagrangian $l$ lies in
\[
\Omega(Q/G;x_0,x_1)\oplus\Omega(\tilde{\mathfrak{g}};x_0,x_1),
\]
where $x_0=\pi(q_0), x_1=\pi(q_1)$ and $\Omega(\tilde{\mathfrak{g}};x_0,x_1)$ is the space of curves in $\tilde{\mathfrak{g}}$ with endpoints whose projections are $x_0,x_1$. In fact, the curve in $\Omega (\tilde{\mathfrak{g}};x_0,x_1)$ is the main object, since the projection of $\bar{\xi}(t)$ to $Q/G$ is $x(t)$. We can thus say that the set of admissible reduced curves is exactly $\Omega(\tilde{\mathfrak{g}};x_0,x_1)$. However, we will keep below the notation $x(t)\oplus \bar{\xi}(t)$ for the admissible curves to keep track of the curve in the reduced configuration manifold $Q/G$.

With respect to the set of admissible variations, we have that the first factor of the projection of $\delta q ^{(1)}$ is simply $\delta x^{(1)}$, the lift of the variation of $x(t)$. However, the induced variation of $\bar{\xi}(t)$ is more involved. For that, one studies the cases where $\delta q$ is vertical or horizontal with respect to the projection $\pi :Q\to Q/G$ and the connection. One can prove that (see \cite{CMR}), when $\delta q$ is vertical, that is, $\delta q(t)=\eta (t)^Q_{q(t)}$ with $\eta:I\to \mathfrak{g}$ and $\eta (t_0)=\eta (t_1)=0$, then $\delta \bar{\xi}$ is given by
\[
\delta \bar{\xi}(t)=\frac{D}{dt}\bar{\eta}(t)+[\bar{\xi}(t), \bar{\eta}(t)]_G,
\]
where $\bar{\eta}(t)=[q(t),\eta(t)]_G$. For horizontal variations $\delta q$, that is, $A(\delta q)=0$, the variation $\delta\bar{\xi}$ is given by
$$\delta \bar{\xi}(t)=\delta x(t)^h_{\bar{\xi}(t)}+\tilde{B}_x(\delta x(t), \dot{x}(t)).$$
It is worth noting that, despite $\delta q$ being horizontal, $\delta\bar{\xi}$ is not horizontal, since there is a vertical summand coming from the curvature of the connection.

As any variation $\delta q$ can be decomposed as the sum of a vertical and horizontal components, we can conclude that the set of admissible variations of the reduced variational problem for a curve $x(t)\oplus \bar{\xi}(t)\in \Omega(Q/G;x_0,x_1)\oplus\Omega(\tilde{\mathfrak{g}};x_0,x_1)$ are
$$\delta\dot{x}\oplus \left( \frac{D}{dt}\bar{\eta}+[\bar{\xi}, \bar{\eta}]_G+ \tilde{B}_x(\delta x, \dot{x})\right) $$
where $\delta \dot{x}$ is the lift of a free variation $\delta x\in \Delta_x$ and $\bar{\eta}(t)$ is a curve in $\tilde{\mathfrak{g}}$ such that $\tau(\bar{\eta}(t))=x(t)$ and $\bar{\eta}(t_0)=\bar{\eta}(t_1)=0$.

Once we understand both the set of admissible curves and variations, the criticality of curves for the reduced variational principle can be written by means of the vanishing of a bundle morphism, called the Lagrange--Poincar\'e morphism,
\[
\mathcal{LP}(l):T^{(2)}Q/G\simeq T^{(2)}(Q/G)\oplus 2\tilde{\mathfrak{g}}\to T^*(Q/G)\oplus\tilde{\mathfrak{g}}^*,
\]
which can be split in two components as
\[
\mathcal{LP}(l)=\mathrm{Hor}(\mathcal{LP})(l)\oplus\mathrm{Ver}(\mathcal{LP})(l):T^{(2)}(Q/G)\oplus 2\tilde{\mathfrak{g}}\to T^*(Q/G)\oplus\tilde{\mathfrak{g}}^*,
\]
with
\begin{align*}
\mathrm{Hor}(\mathcal{LP})(l):T^{(2)}(Q/G)\oplus 2\tilde{\mathfrak{g}}&\to T^*(Q/G)\\
[x]^{(2)}\oplus [\bar{\xi}]^{(1)}&\mapsto \frac{\partial l}{\partial x}(\dot{x},\bar{\xi})-\frac{D}{dt}\frac{\partial l}{\partial \dot{x}}(\dot{x},\bar{\xi})-\left\langle\frac{\partial l}{\partial \bar{\xi}}(\dot{x},\bar{\xi}),\tilde{B}_q(\dot{x},\cdot)\right\rangle,\\
\mathrm{Ver}(\mathcal{LP})(l):T^{(2)}(Q/G)\oplus 2\tilde{\mathfrak{g}}&\to\tilde{\mathfrak{g}}^* \\
[x]^{(2)}\oplus [\bar{\xi}]^{(1)}&\mapsto  \text{ad}^*_{\bar{\xi}}\frac{\partial l}{\partial \bar{\xi}}(\dot{x},\bar{\xi})-\frac{D}{dt}\frac{\partial l}{\partial \bar{\xi}}(\dot{x},\bar{\xi}) .
\end{align*}

\subsection{Noether Current and Vertical Equations}\label{noetherTQ}
Given a Lagrangian $L:TQ\to \mathbb{R}$ invariant under the action of a Lie group $G$, we define the Noether current as the map
\begin{align*}
J:TQ\to &\mathfrak{g}^* \\
\dot{q}\mapsto &J(\dot{q})(\eta)=\left\langle \frac{\partial L}{\partial \dot{q}}(\dot{q}),\eta_q^Q\right\rangle .
\end{align*}
The Noether Theorem states that a solution $q(t)$ of the Euler--Lagrange equations of $L$ satisfies $\left\langle dJ(\dot{q}(t))/dt,\eta\right\rangle =0$ for all $\eta\in\mathfrak{g}$. Since,
\begin{align*}
J(g\dot{q})(\mathrm{Ad}_g\eta)&=\left\langle  \frac{\partial L}{\partial \dot{q}}(g\dot{q}(t)),(\mathrm{Ad}_g\eta)_{gq}^Q\right\rangle =\left.\frac{d }{ds}\right|_{s =0} L(g\dot{q}+s\left.\frac{d}{d\lambda}\right|_{\lambda =0} g\exp(\lambda\eta)g^{-1}\cdot g\dot{q})\\ &=\left.\frac{d }{ds}\right|_{s =0} L(\dot{q}+g^{-1}s\left.\frac{d}{d\lambda}\right|_{\lambda =0}g\exp(\lambda\eta)g^{-1}\cdot g\dot{q}) \\ &=\left.\frac{d }{ds}\right|_{s =0} L(\dot{q}+s\eta_q^Q)  =\left\langle  \frac{\partial L}{\partial \dot{q}}(\dot{q}(t)),\eta_q^Q\right\rangle =J(\dot{q})(\eta) ,
\end{align*}
$J$ is $G$-equivariant with respect to the lifted action on $TQ$ and the co-adjoint action on $\mathfrak{g}^*$. Then,
\begin{align*}
\tilde{J}:TQ\times \mathfrak{g}\to & \mathbb{R}\\
(\dot{q},\eta) \mapsto &\tilde{J}(\dot{q},\eta)=\left\langle  \frac{\partial L}{\partial \dot{q}}(\dot{q}(t)),\eta_q^Q\right\rangle .
\end{align*}
is a $G$-invariant real function which can be reduced to the quotient bundle,
$(TQ\times \mathfrak{g})/G\cong (TQ)/G\oplus \tilde{\mathfrak{g}}\cong T(Q/G)\oplus \tilde{\mathfrak{g}}\oplus \tilde{\mathfrak{g}},$ as follows
\begin{align*}
\tilde{j}:T(Q/G)\oplus \tilde{\mathfrak{g}}\oplus \tilde{\mathfrak{g}} \to & \mathbb{R}\\
(\dot{x},\bar{\xi},\bar{\eta}) \mapsto &\tilde{J}(\dot{x}^h_q+\xi_q^Q,\eta).
\end{align*}
This reduced function coincides with the vertical derivative of the reduced Lagrangian in the sense that
\begin{align*}
\tilde{j}(\dot{x},\bar{\xi},\bar{\eta})&=\tilde{J}(\dot{x}^h_q+\xi_q^Q,\eta)=\left.\frac{d}{ds}\right|_{s =0} L(\dot{x}^h_q+\xi_q^Q+s\eta_q^Q)\\ &=\left.\frac{d}{ds}\right|_{s =0} l(\dot{x}\oplus\bar{\xi}+s(0\oplus\bar{\eta}) =\left\langle \frac{\partial l}{\partial \bar{\xi}}, \bar{\eta}\right\rangle.
\end{align*}
As a consequence,
\begin{align*}
\left\langle \frac{d}{dt}J(\dot{q}(t)),\eta\right\rangle &=  \frac{d}{dt} \left\langle J(\dot{q}(t)),\eta\right\rangle =\frac{d}{dt} \tilde{J}(\dot{q}(t),\eta)\\
&=\frac{d}{dt} \tilde{j}(\dot{x}(t),\bar{\xi}(t),\bar{\eta}(t))=\frac{d}{dt}\left\langle \frac{\partial l}{\partial \bar{\xi}}(\dot{x}(t)\oplus\bar{\xi}(t)), \bar{\eta}(t)\right\rangle.
\end{align*}
From the definition of covariant derivative on a dual bundle
\begin{align*}
\frac{d}{dt}&\left\langle \frac{\partial l}{\partial \bar{\xi}}(\dot{x}(t)\oplus\bar{\xi}(t)), \bar{\eta}(t)\right\rangle\\ &= \left\langle \frac{D}{dt}\left(  \frac{\partial l}{\partial \bar{\xi}}\right) (\dot{x}(t)\oplus\bar{\xi}(t)), \bar{\eta}(t)\right\rangle+\left\langle \frac{\partial l}{\partial \bar{\xi}}(\dot{x}(t)\oplus\bar{\xi}(t)), \frac{D\bar{\eta}(t)}{dt}\right\rangle,
\end{align*}
and from the definition of associated affine connection
\begin{align*}
\frac{D\bar{\eta}(t)}{dt}&=[q(t),-[A(\dot{q}),\eta(t)]+\dot{\eta}]_G=[q(t),-[\xi(t),\eta(t)]]_G=-[\bar{\xi}(t),\bar{\eta}(t)]
\end{align*}
Finally, joining this last three equations, we get
\begin{align*}
\left\langle \frac{d}{dt}J(\dot{q}(t)),\eta\right\rangle &= \left\langle \frac{D}{dt}\left(  \frac{\partial l}{\partial \bar{\xi}}\right) (\dot{x}(t)\oplus\bar{\xi}(t)), \bar{\eta}(t)\right\rangle+\left\langle \frac{\partial l}{\partial \bar{\xi}}(\dot{x}(t)\oplus\bar{\xi}(t)), \frac{D\bar{\eta}(t)}{dt}\right\rangle\\
&=\left\langle \frac{D}{dt}\left(  \frac{\partial l}{\partial \bar{\xi}}\right) (\dot{x}(t)\oplus\bar{\xi}(t)), \bar{\eta}(t)\right\rangle-\left\langle \mathrm{ad}^*_{\bar{\xi}(t)}\frac{\partial l}{\partial \bar{\xi}}(\dot{x}(t)\oplus\bar{\xi}(t)), \bar{\eta}(t)\right\rangle \\
&=\mathrm{Ver}(\mathcal{LP})(l)\bar{\eta}(t).
\end{align*}
We have proved the following result.
\begin{proposition}
A curve $q(t)$ in $Q$ preserves the Noether current of a $G$-invarianf Lagrangian $L:TQ\to \mathbb{R}$ if and only if the curve $[\dot{q}]_G$ in $TQ/G\cong T(Q/G)\oplus\tilde{\mathfrak{g}}$ satisfies the vertical Lagrange--Poincar\'e equation.
\end{proposition}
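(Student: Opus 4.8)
The plan is to extract the equivalence directly from the chain of identities established immediately before the statement, whose final equality reads
\[
\left\langle \frac{d}{dt}J(\dot{q}(t)),\eta\right\rangle = \mathrm{Ver}(\mathcal{LP})(l)(\bar{\eta}(t)),
\]
valid for every fixed $\eta\in\mathfrak{g}$, where $\bar{\eta}(t)=[q(t),\eta]_G$. The whole analytic content---the equivariance of $J$, its reduction to the fibre derivative $\partial l/\partial\bar{\xi}$, the Leibniz rule for the covariant derivative on the dual bundle, and the identity $D\bar{\eta}/dt=-[\bar{\xi},\bar{\eta}]_G$ coming from the associated affine connection---has already been carried out, so the remaining task is only to unwind the two defining notions and match the quantifiers.

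First I would fix the meaning of the two sides. Preserving the Noether current along $q(t)$ means $\frac{d}{dt}J(\dot{q}(t))=0$ as a curve in $\mathfrak{g}^*$, that is, $\langle \frac{d}{dt}J(\dot{q}(t)),\eta\rangle=0$ for every $\eta\in\mathfrak{g}$ and every $t$. On the other side, satisfying the vertical Lagrange--Poincar\'e equation means that $\mathrm{Ver}(\mathcal{LP})(l)=\mathrm{ad}^*_{\bar{\xi}}\frac{\partial l}{\partial\bar{\xi}}-\frac{D}{dt}\frac{\partial l}{\partial\bar{\xi}}$ vanishes as a section of $\tilde{\mathfrak{g}}^*$ along $x(t)=\pi(q(t))$, i.e. its pairing with every element of the fibre $\tilde{\mathfrak{g}}_{x(t)}$ is zero.

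The key step is the quantifier-matching. At a fixed time $t$, as $\eta$ ranges over all of $\mathfrak{g}$, the element $\bar{\eta}(t)=[q(t),\eta]_G$ ranges over the entire fibre $\tilde{\mathfrak{g}}_{x(t)}$, since $\eta\mapsto[q(t),\eta]_G$ is a linear isomorphism $\mathfrak{g}\xrightarrow{\sim}\tilde{\mathfrak{g}}_{x(t)}$ for the chosen representative $q(t)$. Feeding this into the final identity, the vanishing of $\langle \frac{d}{dt}J(\dot{q}(t)),\eta\rangle$ for all $\eta$ is equivalent, at each $t$, to the vanishing of $\mathrm{Ver}(\mathcal{LP})(l)(\bar{\eta}(t))$ for all $\bar{\eta}(t)\in\tilde{\mathfrak{g}}_{x(t)}$, which is exactly the vanishing of the $\tilde{\mathfrak{g}}^*$-valued vertical morphism. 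Letting $t$ vary delivers both implications simultaneously.

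I expect the only delicate point to be the bookkeeping about the admissible $\eta$: the identity $D\bar{\eta}/dt=-[\bar{\xi},\bar{\eta}]_G$ used above was derived assuming $\eta\in\mathfrak{g}$ is time-independent, so that the term $\dot{\eta}$ drops out. I would therefore use precisely these constant $\eta$ both to form the Noether pairing and to exhaust the fibre $\tilde{\mathfrak{g}}_{x(t)}$; no larger class is needed, and a time-dependent $\eta$ would reintroduce $\dot{\eta}$ and spoil the clean identification. With this caveat recorded, the proposition follows at once from the displayed identity.
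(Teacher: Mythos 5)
Your proposal is correct and follows essentially the same route as the paper: the paper's proof \emph{is} the chain of identities preceding the statement, culminating in $\langle \tfrac{d}{dt}J(\dot{q}(t)),\eta\rangle=\mathrm{Ver}(\mathcal{LP})(l)\,\bar{\eta}(t)$, and the proposition is read off from it. Your only addition is to make explicit the quantifier-matching step (that $\eta\mapsto[q(t),\eta]_G$ is a linear isomorphism of $\mathfrak{g}$ onto the fibre $\tilde{\mathfrak{g}}_{x(t)}$, so vanishing for all constant $\eta$ is equivalent to vanishing of the $\tilde{\mathfrak{g}}^*$-valued vertical operator), which the paper leaves implicit and which is a correct and worthwhile clarification.
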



\section{Reduction of variations in the $\mathfrak{LP}$ category}

\subsection{The Lagrange--Poincar\'e Category, $\mathfrak{LP}$} \label{Sect.LPcategory}
Iteration of the Lagrange--Poincar\'e reduction process has an immediate difficulty: the original Lagrangian $L$ is defined on $TQ$, the tangent bundle of the configuration space $Q$, while the reduced Lagrangian $l$ is defined on $TQ/G\cong T(Q/G)\oplus \tilde{\mathfrak{g}}$ which is not be a tangent bundle itself. Hence, reduction of $l$ involves the formulation of Lagrangian Mechanics in a wider category of bundles stable by the action of groups and consistent with the Lagrange-Poincar\'e reduction analyzed in the previous section.

The category of Lagrange--Poincar\'e bundles is denoted by $\mathfrak{LP}$ and is defined as follows:
\begin{enumerate}
\item \label{LP1} The \textbf{objects} of $\mathfrak{LP}$ are vector bundles $\tau_Q\oplus\tau:TQ	\oplus V\to Q$ where $\tau_Q:TQ\to Q$ is the tangent bundle of a manifold $Q$, and $\tau:V\to Q$ is a vector bundle with the following additional structure:
\begin{enumerate}
\item \label{LP1a} a Lie algebra on each fiber of $V$, denoted by $[,]$, such that $V$ is a Lie algebra bundle;
\item \label{LP1b} a $V$ valued 2-form $\omega$ on $Q$;
\item \label{LP1c}a covariant derivative $D/dt$ for curves in $V$ or equivalently a connection $\nabla$ on $V$;
\item \label{LP1d} the bilineal operator defined by
\begin{equation} \label{corchetesecciones}
[X_1\oplus w_1,X_2\oplus w_2]=[X_1,X_2]\oplus(\nabla_{X_1}w_2-\nabla_{X_2}w_1-\omega(X_1,X_2)+[w_1,w_2]),
\end{equation}
is a Lie bracket on sections $X\oplus w\in \Gamma(TQ\oplus V)$. Note that $[X_1,X_2]$ denotes the Lie bracket of vector fields while $[w_1,w_2]$ denotes the Lie bracket in the fibers of $V$.
\end{enumerate}
\item The \textbf{morphisms} between two Lagrange--Poincar\'e bundles $TQ_i\oplus V_i$, $i=1,2$ with structures $[,]_i$, $\omega_i$ and $D_i/dt$ are vector bundle morphisms $f:TQ_1\oplus V_1\to TQ_2\oplus V_2$ such that:
\begin{enumerate}
\item $f(TQ_1)\subset TQ_2$ and $f\vert_{TQ_1}=Tf_0$, where $f_0:Q_1\to Q_2$ is the function induced by $f$ in the base spaces;
\item \label{inv} $f(V_1)\subset V_2$ and $f\vert_{V_1}$ commutes with the additional structure, that is, given $v,v'\in (\tau_1)^{-1}(q)$, $X,X'\in (\tau_{Q_1})^{-1}(q)$ and a curve $v(t)$ in $V_1$:
$$f([v,v']_1)=[f(v),f(v')]_2,$$
$$f(\omega_1(X,X'))=\omega_2(f(X),f(X')),$$
$$f\left( \frac{D_1v(t)}{dt}\right) =\frac{D_2f(v(t))}{dt}.$$
\end{enumerate}
\end{enumerate}

Tangent bundles are a special case of Lagrange--Poincar\'e bundles for which $V=0$ is the trivial vector bundle. The Lie bracket on $\Gamma (TQ)$ is simply the Jacobi-Lie bracket for vector fields, and the morphisms between two tangent bundles are the tangent lift of functions. Another example of $\mathfrak{LP}$-bundle is $T(Q/G)\oplus \tilde{\mathfrak{g}}$ with the Lie bracket on $\tilde{\mathfrak{g}}$ defined in section \ref{req-bundles}, the curvature $\tilde{B}$ as the $2$-form on $Q/G$ and the covariant derivative induced by $\nabla^A$, the affine connection on $\tilde{\mathfrak{g}}$ obtained from the connection $A$ of $Q\to Q/G$  as an associated bundle. Furthermore, given two sections $X_i\oplus \bar{\xi}_i$ in $\Gamma(T(Q/G)\oplus \tilde{\mathfrak{g}})\equiv \mathfrak{X}^{\infty}(Q/G) \oplus \Gamma  (\tilde{\mathfrak{g}} )$ the expression
$$[X_1\oplus \bar{\xi}_1,X_2\oplus \bar{\xi}_2]=[X_1,X_2]\oplus(\nabla^A_{X_1}\bar{\xi}_2-\nabla^A_{X_2}\bar{\xi}_1-\tilde{B}(X_1,X_2)+[\bar{\xi}_1,\bar{\xi}_2]),$$
where the bracket on the first summand is the Jacobi-Lie bracket for tangent fields on $Q/G$, coincides with the quotient Lie bracket on $\Gamma ((TQ)/G)$.

One may think that condition 1.(d) can be deduced from the previous three condition and hence it is superflous. This is not the case. In fact, this condition can be rewritten in a less intriguing way imposing geometrical relations between $[,]$, $\omega$, and $\nabla$ as follows.
\begin{proposition} \label{3cond}
Let $\tau_Q\oplus\tau:TQ \oplus V\to Q$, where $\tau_Q:TQ\to Q$ is the tangent bundle of a manifold $Q$ and $\tau:V\to Q$ is a vector bundle, satisfying properties \em 1.(a)\em ,\em 1.(b)\em , and \em 1.(c)\em. Then, the expression
$$[X_1\oplus w_1,X_2\oplus w_2]=[X_1,X_2]\oplus(\nabla_{X_1}w_2-\nabla_{X_2}w_1-\omega(X_1,X_2)+[w_1,w_2])$$ defines a Lie bracket on sections $X\oplus w\in \Gamma(TQ\oplus V)$ if and only if
\begin{enumerate} [\em (a')\em ]
\setcounter{enumi}{3}
\item \label{LP1e} $d_{\nabla}\omega=0$;
\item \label{LP1f} $\nabla_X[v,w]=[\nabla_Xv,w]+[v,\nabla_X w]$ for all $X\in \mathfrak{X}(Q)$ and all $v,w\in \Gamma(V)$;
\item \label{LP1g} $k_{\nabla}(X,Y)v=-[\omega
(X,Y),v]$ for all $X,Y \in \mathfrak{X}(Q)$ and all $v\in \Gamma(V)$.
\end{enumerate}
\end{proposition}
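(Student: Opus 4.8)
The plan is to verify the two defining properties of a Lie bracket, antisymmetry and the Jacobi identity, and to show that, under the standing hypotheses 1.(a)--1.(c), the Jacobi identity is equivalent to (d$'$), (e$'$), (f$'$). First I would dispose of antisymmetry, which needs none of the new conditions: writing $s_i=X_i\oplus w_i$, the $TQ$-component $[X_1,X_2]$ is skew because it is the Jacobi--Lie bracket, while the $V$-component is skew because $\omega$ is a $2$-form (property 1.(b)) and the fibrewise bracket is skew (property 1.(a)). So the entire content lies in the Jacobi identity. Since the $TQ$-component of the Jacobiator $\sum_{\mathrm{cyc}}[[s_1,s_2],s_3]$ is $\sum_{\mathrm{cyc}}[[X_1,X_2],X_3]$, which vanishes by the Jacobi identity for vector fields, everything reduces to its $V$-component, which I denote $J$.

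Next I would expand $J$ by substituting the bracket formula twice and summing cyclically. The resulting terms split into four families according to the objects they involve, and I would regroup them as follows. (i) The terms built from two covariant derivatives together with $\nabla_{[X_i,X_j]}w_k$ reassemble, after tracking signs, into $-\sum_{\mathrm{cyc}}k_\nabla(X_i,X_j)w_k$. (ii) The terms linear in $\omega$ and $\nabla$, namely $\nabla_{X_k}\omega(X_i,X_j)$ and $\omega([X_i,X_j],X_k)$, reassemble into $d_\nabla\omega(X_1,X_2,X_3)$ through the intrinsic formula for $d_\nabla$ recalled above. (iii) The terms coupling $\nabla$ to the fibrewise bracket collect into a derivation defect. (iv) The purely fibrewise terms $\sum_{\mathrm{cyc}}[[w_i,w_j],w_k]$ vanish by the Jacobi identity in each fibre (property 1.(a)). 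The outcome is the closed form
\[
J=d_\nabla\omega(X_1,X_2,X_3)-\sum_{\mathrm{cyc}}\big(k_\nabla(X_i,X_j)w_k+[\omega(X_i,X_j),w_k]\big)-\sum_{\mathrm{cyc}}\big(\nabla_{X_k}[w_i,w_j]-[\nabla_{X_k}w_i,w_j]-[w_i,\nabla_{X_k}w_j]\big).
\]

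The ``if'' direction is then immediate: conditions (d$'$), (f$'$) and (e$'$) annihilate the first, second and third families respectively, so $J\equiv0$ and the Jacobi identity holds. For ``only if'' I would exploit that each of the three families is $\mathcal C^\infty(Q)$-multilinear in its arguments and homogeneous of a distinct degree in the fibre variables $w_i$: the $d_\nabla\omega$ term has degree $0$, the curvature/$\omega$ term degree $1$, and the derivation-defect term degree $2$. Hence the identity $J\equiv0$ splits into the separate vanishing of each family, which can be made rigorous by evaluating on sections with prescribed values at a point: taking all $w_i=0$ yields $d_\nabla\omega=0$, i.e. (d$'$); taking a single $w_i$ nonzero (and, say, the third vector field zero so the $3$-form $d_\nabla\omega$ drops out) yields $k_\nabla(X,Y)v+[\omega(X,Y),v]=0$, i.e. (f$'$); and taking two of the $w_i$ nonzero with the remaining vector field and section zero yields $\nabla_X[v,w]=[\nabla_Xv,w]+[v,\nabla_Xw]$, i.e. (e$'$).

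The routine-but-delicate part, and the main obstacle, is the sign bookkeeping in steps (i) and (ii): matching the double-derivative combination exactly to $k_\nabla$ and the $\omega$-combination exactly to $d_\nabla\omega$ via the intrinsic formula, where a single misplaced sign derails the identification. A secondary point that must be checked for the ``only if'' argument to be valid is that the derivation defect $\nabla_X[v,w]-[\nabla_Xv,w]-[v,\nabla_Xw]$ is genuinely tensorial in all three slots (using that the fibrewise bracket is $\mathcal C^\infty(Q)$-bilinear); this is precisely what legitimises isolating (e$'$) by pointwise specialisation. Once the closed form for $J$ is established and tensoriality is in hand, the separation of the three families and the stated equivalence follow at once.
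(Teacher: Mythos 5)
Your proposal is correct and follows essentially the same route as the paper: both expand the $V$-component of the Jacobiator, split it into the three homogeneous families matching (d$'$), (e$'$), (f$'$) (the purely fibrewise part vanishing by the fibre Jacobi identity), and recover each condition by specializing arguments to zero. The only cosmetic difference is that you package the decomposition as a single closed formula for $J$ and justify the separation by tensoriality and homogeneity degree, whereas the paper phrases it as the cyclic sum of an auxiliary expression $C$ evaluated at the same special arguments; the content is identical.
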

\begin{proof}
The expression $[X_1,X_2]\oplus(\nabla_{X_1}w_2-\nabla_{X_2}w_1-\omega(X_1,X_2)+[w_1,w_2])$ is clearly $\mathbb{R}$-bilinear on $\Gamma(TQ\oplus V)$ and its skew symmetry is straightforward since
$$[X\oplus w,X\oplus w]=[X,X]\oplus(\nabla_{X}w-\nabla_{X}w-\omega(X,X)+[w,w])=0.$$
Thus, the expression defines a Lie Bracket if and only if it satisfies the Jacobi identity, that is, if the expression
\begin{align*}
& [ X_1\oplus w_1,[ X_2\oplus w_2,X_3\oplus w_3]]= \\=& \left[ X_1\oplus w_1,[X_2,X_3]\oplus(\nabla_{X_2}w_3-\nabla_{X_3}w_2-\omega(X_2,X_3)+[w_2,w_3])\right] \\
=&[X_1,[X_2,X_3]]\oplus (\nabla_{X_1}\nabla_{X_2} w_3-\nabla_{X_1}\nabla_{X_3} w_2-\nabla_{X_1}\omega(X_2,X_3)+\nabla_{X_1}[w_2,w_3]\\
-&\nabla_{[X_2,X_3]}w_1-\omega(X_1,[X_2,X_3])+[w_1, \nabla_{X_2} w_3-\nabla_{X_3} w_2-\omega(X_2,X_3)+[w_2,w_3]])
\end{align*}
is cyclic. This is equivalent to ask if
\begin{align*}
C(X_1,w_1,X_2,w_2,X_3,w_3)=&\nabla_{X_2}\nabla_{X_3} w_1-\nabla_{X_3}\nabla_{X_2} w_1-\nabla_{X_1}\omega(X_2,X_3)\\&+\nabla_{X_1}[w_2,w_3]-\nabla_{[X_2,X_3]}w_1-\omega(X_1,[X_2,X_3])\\&+[w_3, \nabla_{X_1} w_2]-[w_2,\nabla_{X_1} w_3]-[w_1,\omega(X_2,X_3)]
\end{align*}
is cyclic.
Assume that $C(X_1,w_1,X_2,w_2,X_3,w_3)$ is cyclic. For $w_1=w_2=w_3=0$ and arbitrary $X_1,X_2,X_3\in\mathfrak{X}(Q)$, we have $$C(X_i,0,X_j,0,X_k,0)=-\nabla_{X_i}\omega(X_j,X_k)-\omega(X_i,[X_j,X_k]),$$ for all $i,j,k\in\{1,2,3\}$. Hence,
\begin{align*}
0=C(X_1,0,X_2,0,X_3,0)+C(X_2,0,X_3,0,X_1,0)+C(X_3,0,X_1,0,X_2,0)=\\
-\nabla_{X_1}\omega(X_2,X_3)-\omega(X_1,[X_2,X_3])
-\nabla_{X_2}\omega(X_3,X_1)-\omega(X_2,[X_3,X_1])\\
-\nabla_{X_3}\omega(X_1,X_2)-\omega(X_3,[X_1,X_2])=-d_{\nabla}\omega(X_1,X_2,X_3),
\end{align*}
and we have condition 1.(d').

For $w_1=X_2=X_3=0$, we have
$$C(X_1,0,0,w_2,0,w_3)=[w_3, \nabla_{X_1} w_2]-[w_2,\nabla_{X_1} w_3]+\nabla_{X_1}[w_2,w_3],$$ and
$C(0,w_3,X_1,0,0,w_2)=C(0,w_2,0,w_3,X_1,0)=0$. Hence,
\begin{align*}
0&=C(X_1,0,0,w_2,0,w_3)+C(0,w_3,X_1,0,0,w_2)+C(0,w_2,0,w_3,X_1,0)\\
&=\nabla_{X_1}[w_2,w_3]-[\nabla_{X_1} w_2, w_3]-[w_2,\nabla_{X_1} w_3],
\end{align*}
from where 1.(e') follows.

For $X_1=w_2=w_3=0$, we have $$C(0,w_1,X_2,0,X_3,0)=\nabla_{X_2}\nabla_{X_3} w_1-\nabla_{X_3}\nabla_{X_2} w_1-\nabla_{[X_2,X_3]}w_1-[w_1,\omega(X_2,X_3)],$$ and
$C(X_3,0,0,w_1,X_2,0)= C(X_2,0,X_3,0,0,w_1)=0$. Hence,
\begin{align*}
0&=C(0,w_1,X_2,0,X_3,0)+C(X_3,0,0,w_1,X_2,0)+ C(X_2,0,X_3,0,0,w_1)\\
&=\nabla_{X_2}\nabla_{X_3} w_1-\nabla_{X_3}\nabla_{X_2} w_1-\nabla_{[X_2,X_3]}w_1-[w_1,\omega(X_2,X_3)]\\&=k_{\nabla}(X_2,X_3)w_1+[\omega(X_2,X_3),w_1]
\end{align*}
from where 1.(f') follows.

Conversely, conditions 1.(d'), 1.(e') and 1.(f') imply that expressions $C(X_1,0,X_2,0,X_3,0)$, $C(X_1,0,0,w_2,0,w_3)$, and $C(0,w_1,X_2,0,X_3,0)$ are cyclic. Hence,
\begin{eqnarray*}
C(X_1,w_1,X_2,w_2,X_3,w_3)=&C(X_1,0,X_2,0,X_3,0)+C(X_1,0,0,w_2,0,w_3)\\
&+C(0,w_1,X_2,0,X_3,0)
\end{eqnarray*} is cyclic.
\end{proof}

An action in the category $\mathfrak{LP}$ of a Lie group $G$ on an object $TQ\oplus V$ of $\mathfrak{LP}$ is a vector bundle action $\rho:G\times TQ\oplus V \to TQ\oplus V$ such that for all $g\in G$, $\rho_g:TQ\oplus V \to TQ\oplus V$ is a $\mathfrak{LP}$ isomorphism. As a consequence, the additional structures  $[,]$, $\omega$, $\nabla$ are invariant by $\rho$ and quotient structures can be defined in the sense specified below.

A Lie bracket $[,]$ on $V$ is said to be invariant if for all $g\in G$ and all $v_1,v_2\in V$ such that $\tau(v_1)= \tau(v_2)$ it follows $g[v_1,v_2]=[gv_1,gv_2].$ Then, the expression
$$[[v_1]_G,[v_2]_G]_G=[[v_1,v_2]]_G$$ defines a \textit{quotient Lie bracket} on $V/G$.

A $V$-valued $2$-form $\omega$ on $Q$ is said to be invariant if for all $g\in G$ and all $X,Y\in TQ$ such that $\tau_Q(X)=\tau_Q(Y)$ we have that $g\omega(X,Y)=\omega(gX,gY)$.
An invariant form defines a \textit{quotient generalized  form} as
$$[\omega]_G([X]_G,[Y]_G)=[\omega(X,Y)]_G$$
for all $X,Y\in TQ$ such that $\tau_Q(X)=\tau_Q(Y).$ Observe that $[\omega]_G$ is not a form on $Q/G$ since it is a skew-symmetric bilineal form on the fibers of $TQ/G$ rather than on the fibers of $T(Q/G)$. In fact, we identify $[X]_G\equiv T\pi (X) \oplus \bar{\xi}$, $[Y]_G\equiv T\pi (Y) \oplus \bar{\eta}$ via the isomorphism $\alpha_A$ and obtain
\begin{equation*}
\begin{split}
[\omega]_G([X]_G,[Y]_G)=&[\omega]_G(T\pi (X),T\pi (Y))+[\omega]_G(T\pi (X),\bar{\eta})\\&+[\omega]_G(\bar{\xi},T\pi (Y))+[\omega]_G(\bar{\xi},\bar{\eta}).
\end{split}
\end{equation*}
Only the term $[\omega]_G(T\pi (X),T\pi (Y))$ is a $2$-form on $Q/G$.

Let $Z=X\oplus \bar{\xi}\in \Gamma (TQ/G)=\Gamma (T(Q/G))\oplus\Gamma(\tilde{\mathfrak{g}})$ and $[v]_G\in \Gamma(V/G)$ with $v\in\Gamma^G(V)$. There is a unique $\bar{Z}\in\Gamma^G(TQ)$ identified with $Z$. Furthermore, $\bar{Z}=X^h\oplus Y$ with $X^h \in\mathfrak{X}(TQ)$ the horizontal lift of $X$ and $Y$ the unique $\pi$-vertical $G$-invariant vector field such that for all $x\in Q/G$, $\bar{\xi}(x)=[q,A(Y(q))]_G$ with $q\in \pi^{-1}(x)$. Then the \textit{quotient connection} is defined by
$$\left[ \nabla^{(A)}\right] _{G,X\oplus \bar{\xi}}[v]_G=[\nabla_{\bar{Z}}v]_G,$$
the \textit{vertical quotient connection} is defined by
$$\left[ \nabla^{(A,V)}\right] _{G,X\oplus \bar{\xi}}[v]_G=[\nabla_{Y}v]_G,$$
and the \textit{horizontal quotient connection} is defined by
$$\left[ \nabla^{(A,H)}\right] _{G,X\oplus \bar{\xi}}[v]_G=[\nabla_{X^h}v]_G.$$
Note that quotient connections are not connections in the usual sense since derivation is carried along a section of $TQ/G$ intead of a section of $T(Q/G)$. In particular, this is called a $(TQ)/G$-connection in the context of Lie algebroids \cite{algebroids2017}. Yet, the horizontal quotient connection can be thought as a usual connection on $T(Q/G)$ since it only depends on $T(Q/G)\subset TQ/G$.

Using this quotient structures, it is proved in \cite{CMR} that the $\mathfrak{LP}$ category is stable by reduction:
\begin{theorem} \label{quotientLP}
Let $\tau_Q\oplus \tau:TQ\oplus V \to Q$ be an object of $\mathfrak{LP}$ with additional structures $[,]$, $\omega$, $\nabla$. Let $\rho:G\times (TQ\oplus V)\to TQ\oplus V$ be an action in the category $\mathfrak{LP}$ and $A$ a principal connection on $Q\to Q/G$. Then, the vector bundle
$$T(Q/G)\oplus\tilde{\mathfrak{g}}\oplus(V/G)$$
with additional structures $[,]^{\tilde{\mathfrak{g}}}$, $\omega^{\tilde{\mathfrak{g}}}$ and $\nabla^{\tilde{\mathfrak{g}}}$ in $\tilde{\mathfrak{g}}\oplus(V/G)$ given by
\begin{eqnarray*}
\nabla^{\tilde{\mathfrak{g}}}_X(\bar{\xi}\oplus[v]_G)&=&\nabla^{A}_X\bar{\xi}\oplus\left([\nabla^{(A,H)}]_{G,X}[v]_G-[\omega]_G(X,\bar{\xi}) \right),\\
\omega^{\tilde{\mathfrak{g}}}(X_1,X_2)&=&\tilde{B}^A(X_1,X_2)\oplus [\omega]_G(X_1,X_2), \\
\,[\bar{\xi}_1\oplus[v_1]_G,\bar{\xi}_2\oplus[v_2]_G]^{\tilde{\mathfrak{g}}} &=& [\bar{\xi}_1,\bar{\xi}_2]\\
\oplus \left([\nabla^{(A,V)}]_{G,\bar{\xi}_1}[v_2]_G \right. & -  &\left. [\nabla^{(A,V)}]_{G,\bar{\xi}_2}[v_1]_G
-[\omega]_G(\bar{\xi}_1,\bar{\xi}_2)+[[v_1]_G,[v_2]_G]_G  \right)
\end{eqnarray*}
is an object of the $\mathfrak{LP}$ category, the reduced bundle with respect to the group $G$ and the connection $A$.
\end{theorem}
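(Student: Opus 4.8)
The plan is to avoid verifying conditions \textbf{1.(d')}, \textbf{1.(e')}, \textbf{1.(f')} of Proposition \ref{3cond} by hand, and instead reduce the whole statement to a single matching identity: that the section-level bracket \eqref{corchetesecciones} built from the reduced data $[,]^{\tilde{\mathfrak{g}}}$, $\omega^{\tilde{\mathfrak{g}}}$, $\nabla^{\tilde{\mathfrak{g}}}$ on $T(Q/G)\oplus\tilde{\mathfrak{g}}\oplus(V/G)$ coincides with the Lie bracket that $\Gamma((TQ\oplus V)/G)$ inherits from the invariant sections. If this matching holds, the Jacobi identity comes for free and no cyclic computation is required.

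First I would record that the reduced data are genuine $\mathfrak{LP}$ data. The form $\omega^{\tilde{\mathfrak{g}}}$ is a $2$-form on $Q/G$ whose summands are the adjoint-valued curvature $\tilde{B}^A$ and the horizontal--horizontal part of $[\omega]_G$ (the only part that is a true form on $Q/G$, as emphasized before the statement). The covariant derivative $\nabla^{\tilde{\mathfrak{g}}}$ satisfies the Leibniz rule: for $f\in\mathcal{C}^\infty(Q/G)$ the horizontal quotient connection produces $X^h[f\circ\pi]=X[f]$ since $d\pi(X^h)=X$, while the correction $-[\omega]_G(X,\bar{\xi})$ is $\mathcal{C}^\infty(Q/G)$-linear and therefore does not spoil Leibniz. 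The key tensoriality observation is that the vertical quotient connection $[\nabla^{(A,V)}]_{G,\bar{\xi}}[v]_G=[\nabla_Y v]_G$ is $\mathcal{C}^\infty(Q/G)$-linear in \emph{both} arguments, because the lift $Y$ of $\bar{\xi}$ is $\pi$-vertical and annihilates pullbacks $f\circ\pi$; hence $[,]^{\tilde{\mathfrak{g}}}$ is a genuine fiberwise ($\mathbb{R}$-bilinear, skew) operation, as an $\mathfrak{LP}$ bracket must be.

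Next, and this is the crux, I would carry out the matching computation. Since $TQ\oplus V$ is an $\mathfrak{LP}$-object, $\Gamma(TQ\oplus V)$ is a Lie algebra under \eqref{corchetesecciones}; because the $G$-action is by $\mathfrak{LP}$-isomorphisms this bracket is invariant, so $\Gamma^G(TQ\oplus V)$ is a Lie subalgebra and $\Gamma((TQ\oplus V)/G)\cong\Gamma(T(Q/G)\oplus\tilde{\mathfrak{g}}\oplus(V/G))$ inherits a Lie bracket. I would then represent sections $X_i\oplus\bar{\xi}_i\oplus[v_i]_G$ by their invariant lifts $X_i^h+Y_i\oplus v_i$, with $X_i^h$ the $A$-horizontal lift, $Y_i$ the vertical field fixed by $\bar{\xi}_i=[q,A(Y_i)]_G$, and $v_i\in\Gamma^G(V)$, and apply \eqref{corchetesecciones} to them, projecting back via $\alpha_A$. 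On the $TQ$-factor this is the classical Lagrange--Poincar\'e splitting: the horizontal projection of $[X_1^h+Y_1,X_2^h+Y_2]$ gives $[X_1,X_2]$ on $Q/G$, while its $A$-vertical projection produces $\tilde{B}^A(X_1,X_2)$ from $[X_1^h,X_2^h]$, the bracket $[\bar{\xi}_1,\bar{\xi}_2]$ from $[Y_1,Y_2]$, and the covariant-derivative terms from the mixed brackets, reproducing the $T(Q/G)\oplus\tilde{\mathfrak{g}}$ part of the reduced bracket. On the $V$-factor, expanding $\nabla_{X_1^h+Y_1}v_2-\nabla_{X_2^h+Y_2}v_1-\omega(X_1^h+Y_1,X_2^h+Y_2)+[v_1,v_2]$ and grouping according to the definitions of the horizontal and vertical quotient connections, of $[\omega]_G$, and of the quotient fiberwise bracket, one reads off exactly the $V/G$-components of $\nabla^{\tilde{\mathfrak{g}}}_{X_1}s_2-\nabla^{\tilde{\mathfrak{g}}}_{X_2}s_1-\omega^{\tilde{\mathfrak{g}}}(X_1,X_2)+[s_1,s_2]^{\tilde{\mathfrak{g}}}$.

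Once the matching is established the conclusion is immediate: the reduced section bracket is Lie because it equals the inherited quotient bracket; setting both $TQ$-arguments to zero collapses \eqref{corchetesecciones} to $0\oplus[s_1,s_2]^{\tilde{\mathfrak{g}}}$, so the section-level Jacobi identity together with the tensoriality established above forces $[,]^{\tilde{\mathfrak{g}}}$ to be fiberwise a Lie algebra, giving condition \textbf{1.(a)}; conditions \textbf{1.(b)} and \textbf{1.(c)} were checked above, and \textbf{1.(d)} is exactly the Lie-bracket property just obtained. Hence the reduced bundle is an object of $\mathfrak{LP}$. I expect the main obstacle to be the $V$-factor bookkeeping in the matching step: one must track carefully the cross terms between the horizontal lift $X^h$ and the vertical lift of $\bar{\xi}$, since $[\omega]_G$ is not a form on $Q/G$ but a fiberwise pairing on $TQ/G$, and it is precisely the mixed horizontal--vertical evaluation $-[\omega]_G(X,\bar{\xi})$ that must be absorbed into $\nabla^{\tilde{\mathfrak{g}}}$ rather than into $\omega^{\tilde{\mathfrak{g}}}$ or the bracket.
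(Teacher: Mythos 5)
Your proposal is correct, but a comparison with ``the paper's own proof'' is only partially possible: the paper does not prove Theorem~\ref{quotientLP} at all, importing it from \cite{CMR}; the only relevant apparatus it supplies is Proposition~\ref{3cond} and the unproved assertion, made just after the definition of $\alpha^{TQ\oplus V}_A$, that this map is a Lie algebra homomorphism between the quotient bracket on $\Gamma((TQ\oplus V)/G)$ and the bracket built from the structures of Theorem~\ref{quotientLP}. Your matching identity is exactly that assertion, so your argument proves the theorem and the homomorphism claim in one stroke, and it is the mechanism the paper implicitly relies on rather than a genuinely different route. The computation does close: writing the invariant lift as $(X_i^h+Y_i)\oplus v_i$ and expanding \eqref{corchetesecciones}, the terms $[\nabla_{X_1^h}v_2]_G-[\omega(X_1^h,Y_2)]_G$ assemble into the $V/G$-part of $\nabla^{\tilde{\mathfrak{g}}}_{X_1}(\bar{\xi}_2\oplus[v_2]_G)$, the term $-[\omega(X_1^h,X_2^h)]_G$ yields $-\omega^{\tilde{\mathfrak{g}}}(X_1,X_2)$, and $[\nabla_{Y_1}v_2]_G-[\nabla_{Y_2}v_1]_G-[\omega(Y_1,Y_2)]_G+[[v_1,v_2]]_G$ yields $[\cdot,\cdot]^{\tilde{\mathfrak{g}}}$, while the $TQ$-factor reduces to the classical $T(Q/G)\oplus\tilde{\mathfrak{g}}$ case already recorded in Section~\ref{Sect.LPcategory}. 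Your derivation of condition 1.(a) from the section-level Jacobi identity restricted to sections with vanishing $T(Q/G)$-component is also sound, and it genuinely needs the $\mathcal{C}^{\infty}(Q/G)$-bilinearity of $[\cdot,\cdot]^{\tilde{\mathfrak{g}}}$ that you establish via the verticality of the lift $Y$ (so that $Y[f\circ\pi]=0$). What your route buys over the direct alternative --- verifying (d'), (e'), (f') of Proposition~\ref{3cond} for the reduced data by a cyclic computation --- is that the Jacobi identity is inherited for free and the homomorphism property of $\alpha^{TQ\oplus V}_A$, which the paper needs later for reduction by stages, comes out as a by-product; the price is precisely the cross-term bookkeeping you flag, namely that $-[\omega]_G(X,\bar{\xi})$ must land in $\nabla^{\tilde{\mathfrak{g}}}$ and not in $\omega^{\tilde{\mathfrak{g}}}$ or in the fiber bracket.
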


\subsection{Lagrangian mechanics in the $\mathfrak{LP}$ category}
Let $TQ\oplus V\in \mathfrak{LP}$ and let
$$L:TQ\oplus V\to \mathbb{R}$$
be a function, the Lagrangian.
We denote by $\ell\Omega(Q)\oplus\Omega(V)$ the space of curves in $TQ\oplus V$ of the form $\dot{q}(t)\oplus v(t)$ where $v(t)$ is a curve in $V$, $q(t)=\tau(v(t))$ is a curve in $Q$ and $\dot{q}(t)$ the lift of $q(t)$ to $TQ$. In addition, $\ell\Omega(Q;q_0,q_1)\oplus\Omega(V;q_0,q_1)$ denotes the curves within $\Omega(Q)\oplus\Omega(V)$ such that $q(t)$ has endpoints $q_0,q_1$. The action of $L:TQ\oplus V\to \mathbb{R}$ is defined on $\ell\Omega(Q;q_0,q_1)\oplus\Omega(V;q_0,q_1)$ as
\begin{equation*}
S(L)(\dot{q}\oplus v)=\int_{t_0}^{t_1} L(\dot{q}(t),v(t)) dt.
\end{equation*}
Let $\Delta^{\ell}_{q\oplus v}\subset \Delta_{\dot{q}\oplus v}$ be the set of variations of the form $\delta \dot{q}\oplus  \delta v$ with $\delta \dot{q}\in \Delta^{\ell}_{q}$ and
$$\delta v=\frac{Dw}{dt}+[v,w]+\omega_q(\delta q,\dot{q}),$$
where $w(t)$ is a curve in $V$ with zero endpoints and $\tau(w(t))=q(t)$.
A curve $\dot{q}(t)\oplus v(t)\in \ell\Omega(Q;q_0,q_1)\oplus\Omega(V;q_0,q_1)$ satisfies the variational principle in $\mathfrak{LP}$ defined by $L: TQ\oplus V\to \mathbb{R}$ if for any $\delta (\dot{q}\oplus v)\in \Delta^{\ell}_{q\oplus v}$ we have that
$$0=d\mathcal{S}(L)\cdot (\delta \dot{q}\oplus  \delta v)=\left.\frac{d}{d\lambda}\right|_{\lambda =0}\mathcal{S}(L)((\dot{q}\oplus v)(t,\lambda)),$$
where $(\dot{q}\oplus v)(t,\lambda)$ is a deformation of $\dot{q}\oplus v$ inducing $\delta \dot{q}\oplus  \delta v$.

This variational principle for Lagrangians defined on $\mathfrak{LP}$ bundles can be translated into equations.

\begin{theorem} \label{ecs.lp}
Let $L:TQ\oplus V\to\mathbb{R}$ a Lagrangian and
$$S(L)(\dot{q}\oplus v)=\int_{t_0}^{t_1} L(\dot{q}(t),v(t)) dt$$
its action on curves $\dot{q}(t)\oplus v(t)$ within $\ell\Omega(Q;q_0,q_1)\oplus\Omega(V;q_0,q_1)$. There exists a unique bundle map
$$\mathcal{LP}(L):T^{(2)}Q\oplus 2 V\to T^*Q\oplus V^*$$
such that, for each variation $\delta \dot{q}\oplus \delta v$ in $\Delta^{\ell}_{q\oplus v}$, it satisfies
$$d\mathcal{S}(L)\cdot (\delta \dot{q}\oplus \delta v)= \int_{t_0}^{t_1} \mathcal{LP}(L)([q]^{(2)}\oplus [v]^{(1)})\cdot (\delta q\oplus \delta v).$$
The bundle map $\mathcal{LP}(L)$ is called Lagrange--Poincar\'e operator and since it takes values in a direct sum, it can be decomposed into two terms: the horizontal Lagrange--Poincar\'e operator $\mathrm{Hor}(\mathcal{LP})(L)$ and the vertical  Lagrange--Poincar\'e operator $\mathrm{Ver}(\mathcal{LP})(L)$.
Their expressions are
\begin{align*}
\mathrm{Ver}\mathcal{LP}(L):T^{(2)}Q\oplus 2V &\to T^*Q \\
[q]^{(2)}\oplus [v]^{(1)} &\mapsto \frac{\partial L}{\partial q}-\frac{D}{dt}\frac{\partial L}{\partial \dot{q}}-\left\langle\frac{\partial L}{\partial v},\omega_q(\dot{q},\cdot)\right\rangle,
\end{align*}
\begin{align*}
\mathrm{Hor}\mathcal{LP}(L):T^{(2)}Q\oplus 2V &\to V^* \\
[q]^{(2)}\oplus [v]^{(1)} &\mapsto  \mathrm{ad}^*_{v}\frac{\partial L}{\partial v}-\frac{D}{dt}\frac{\partial L}{\partial v} .
\end{align*}

\end{theorem}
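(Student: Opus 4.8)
The plan is to mimic, in the abstract $\mathfrak{LP}$ setting, the integration-by-parts derivation of the Lagrange--Poincar\'e equations already carried out in Section~\ref{usuallp} for the reduced Lagrangian on $T(Q/G)\oplus\tilde{\mathfrak{g}}$; here $V$, $\omega$ and $\nabla$ play the roles of $\tilde{\mathfrak{g}}$, $\tilde{B}$ and $\nabla^{A}$. First I would fix an auxiliary affine connection on $TQ\to Q$, so that $\partial L/\partial q$ and $\partial L/\partial\dot q$ make sense as the horizontal and vertical parts of the fibre derivative of $L$ along the $TQ$-factor, and then differentiate the action under the integral sign. Splitting the variation $\delta\dot q$ of the tangent lift into its horizontal and vertical components via this connection produces
$$\left.\frac{d}{d\lambda}\right|_{\lambda=0} L=\left\langle\frac{\partial L}{\partial q},\delta q\right\rangle+\left\langle\frac{\partial L}{\partial\dot q},\frac{D\delta q}{dt}\right\rangle+\left\langle\frac{\partial L}{\partial v},\delta v\right\rangle,$$
the covariant derivative $D\delta q/dt$ being exactly the vertical part of $\delta\dot q$.

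Next I would substitute the admissible variation $\delta v=\frac{Dw}{dt}+[v,w]+\omega_q(\delta q,\dot q)$ and integrate by parts. Using the Leibniz rule for the covariant derivative on the dual pairing, $\frac{d}{dt}\langle\mu,s\rangle=\langle\frac{D\mu}{dt},s\rangle+\langle\mu,\frac{Ds}{dt}\rangle$, I would move the $t$-derivatives off $\frac{D\delta q}{dt}$ and $\frac{Dw}{dt}$ onto $\partial L/\partial\dot q$ and $\partial L/\partial v$ respectively; the resulting total $t$-derivatives integrate to boundary terms that vanish because $\delta q$ and $w$ have vanishing endpoints. The bracket term is rewritten with the coadjoint action as $\langle\partial L/\partial v,[v,w]\rangle=\langle\mathrm{ad}^*_v\,\partial L/\partial v,w\rangle$, and the form term is rewritten by skew-symmetry of $\omega$ as $\langle\partial L/\partial v,\omega_q(\delta q,\dot q)\rangle=-\langle\partial L/\partial v,\omega_q(\dot q,\cdot)\rangle\cdot\delta q$.

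Collecting the coefficients of the two independent free data $\delta q$ and $w$ then yields precisely the two announced expressions: the $\delta q$-terms give the $T^*Q$-valued operator $\frac{\partial L}{\partial q}-\frac{D}{dt}\frac{\partial L}{\partial\dot q}-\langle\partial L/\partial v,\omega_q(\dot q,\cdot)\rangle$, and the $w$-terms give the $V^*$-valued operator $\mathrm{ad}^*_v\frac{\partial L}{\partial v}-\frac{D}{dt}\frac{\partial L}{\partial v}$. Existence of the bundle map $\mathcal{LP}(L)$ follows once one checks that these expressions depend only on the $2$-jet $[q]^{(2)}$ and the $1$-jet $[v]^{(1)}$, and, as for the Euler--Lagrange operator, that the dependence on the auxiliary $TQ$-connection cancels. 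Uniqueness is immediate from the fundamental lemma of the calculus of variations: since $\delta q$ (with fixed endpoints) and $w$ (with zero endpoints) range independently over all such curves, any bundle map reproducing $d\mathcal{S}(L)$ must coincide pointwise with the one constructed.

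The step I expect to be the main obstacle is the first one, namely justifying the geometric first-variation formula: one must argue carefully that, after interchanging $d/d\lambda$ with the integral, the vertical part of the variation of the tangent lift $\delta\dot q$ is the covariant derivative $D\delta q/dt$, and that pairing with the horizontal and vertical parts of the fibre derivative of $L$ reproduces the $\partial L/\partial q$ and $\partial L/\partial\dot q$ terms independently of the chosen connection on $TQ$. The remaining manipulations---the integration by parts, the $\mathrm{ad}^*$ identity and the skew-symmetry of $\omega$---are routine once this is in place.
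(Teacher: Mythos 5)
Your proposal is correct and follows essentially the same route as the paper's proof: fix an auxiliary connection on $TQ$ to split the fibre derivative of $L$, differentiate under the integral, substitute the admissible variation $\delta v=\frac{Dw}{dt}+[v,w]+\omega_q(\delta q,\dot q)$, integrate by parts so the boundary terms vanish, and collect the coefficients of $\delta q$ and $w$ using $\langle\partial L/\partial v,[v,w]\rangle=\langle\mathrm{ad}^*_v\,\partial L/\partial v,w\rangle$ and the skew-symmetry of $\omega$. Your added remarks on connection-independence and on uniqueness via the fundamental lemma of the calculus of variations are consistent with (and slightly more explicit than) the paper's argument.
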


\begin{proof}
The proof is analogous to the explicit deduction of the usual Lagrange--Poincar\'e equations. We apply the chain rule to the factorization,
\begin{align*}
\mathbb{R} &\longrightarrow& TQ \oplus V &\longrightarrow& \mathbb{R} \\
\lambda &\longmapsto& (\dot{q}\oplus v)(t,\lambda) &\longmapsto& L((\dot{q}\oplus v)(t,\lambda)) \\
\end{align*}
to obtain $$\frac{d L(\dot{q}\oplus v)(t,\lambda)}{d\lambda}=\frac{\partial L}{\partial (\dot{q}\oplus v)}\frac{\partial(\dot{q}\oplus v)(t,\lambda)}{\partial \lambda},$$
where
 \begin{eqnarray*}
\partial\dot{q}\oplus v(t,\lambda)/\partial \lambda &:&\mathbb{R}\to T_{\dot{q}\oplus v(t,\lambda)}(TQ\oplus V),\\
\partial L/\partial (\dot{q}\oplus v) &:& T_{\dot{q}\oplus v(t,\lambda)}(TQ\oplus V)\to \mathbb{R}.
\end{eqnarray*}
We chose an arbitrary connection on $TQ\to Q$ which, together with the connection $\nabla$ on $V$, defines a connection on $TQ\oplus V$. This connection provides a split of $T_{\dot{q}\oplus v(t,\lambda)}(TQ\oplus V)$ into
\begin{align*}
&\mathrm{Hor}_{\dot{q}\oplus v(t,\lambda)}(TQ\oplus V)\oplus \mathrm{Ver^1}_{\dot{q}\oplus v(t,\lambda)}(TQ\oplus V)\oplus \mathrm{Ver^2}_{\dot{q}\oplus v(t,\lambda)}(TQ\oplus V)\\&\cong T_{q(t,\lambda)}Q\oplus T_{q(t,\lambda)}Q\oplus V_{q(t,\lambda)}
\end{align*}
Then,

\begin{align*}
&d\mathcal{S}(L)\cdot (\delta \dot{q}\oplus \delta v)= \left.\frac{d}{d\lambda}\right|_{\lambda =0}\mathcal{S}(L)((\dot{q}\oplus v)(t,\lambda))\\=&\left.\frac{d}{d\lambda}\right|_{\lambda =0} \left( \displaystyle\int_{t_0}^{t_1} L((\dot{q}\oplus v)(t,\lambda))dt\right)
= \displaystyle\int_{t_0}^{t_1}\left.\frac{d}{d\lambda}\right|_{\lambda =0}L((\dot{q}\oplus v)(t,\lambda))dt\\
=& \displaystyle\int_{t_0}^{t_1} \left( \frac{\partial L}{\partial q} \left.\frac{\partial q}{\partial \lambda}\right|_{\lambda =0}+\frac{\partial L}{\partial \dot{q}}\left.\frac{D\dot{q}}{d\lambda}\right|_{\lambda =0}+\frac{\partial L}{\partial v}\left.\frac{Dv}{d\lambda}\right|_{\lambda =0}\right) dt.\\
\end{align*}
Performing integration by parts as in the usual Euler-Lagrang\'e proof gives

\begin{align*}
d\mathcal{S}(L)\cdot (\delta \dot{q}\oplus \delta v)
=& \displaystyle\int_{t_0}^{t_1} \left( \left( \frac{\partial L}{\partial q}-\frac{D}{dt}\left( \frac{\partial L}{\partial \dot{q}}\right)   \right) \left.\frac{\partial q}{\partial \lambda}\right|_{\lambda =0}+\frac{\partial L}{\partial v}\left.\frac{Dv}{d\lambda}\right|_{\lambda =0}\right)dt.\\
\end{align*}
As $\delta \dot{q}\oplus \delta v\in \Delta^{\ell}_{q\oplus v}$, we have that
$$\frac{Dv}{d\lambda}=\frac{Dw}{dt}+[v,w]+\omega_q(\delta q,\dot{q}),$$
where $w(t)$ is a curve in $V$ with null endpoints  and $\tau (w(t))=q(t)$. Hence;
\begin{align*}
&d\mathcal{S}(L)\cdot (\delta \dot{q}\oplus \delta v)
= \displaystyle\int_{t_0}^{t_1} \left( \left( \frac{\partial L}{\partial q}-\frac{D}{dt}\left( \frac{\partial L}{\partial \dot{q}}\right)   \right) \delta q+\frac{\partial L}{\partial v}\left( \frac{Dw}{dt}+[v,w]+\omega_q(\delta q,\dot{q})\right) \right)dt\\
=& \displaystyle\int_{t_0}^{t_1} \left( \left(\frac{\partial L}{\partial q}-\frac{D}{dt}\left( \frac{\partial L}{\partial \dot{q}}\right)   \right) \delta q
+\frac{d}{dt}\left( \frac{\partial L}{\partial v}w\right)-\frac{D}{dt}\left(\frac{\partial L}{\partial v} \right)w
+\frac{\partial L}{\partial v}
[v,w]+\frac{\partial L}{\partial v}\omega_q(\delta q,\dot{q}) \right)dt\\
=&\left[ \frac{\partial L}{\partial v}w\right]^{t_1}_{t_0}+ \displaystyle\int_{t_0}^{t_1} \left( \left( \frac{\partial L}{\partial q}-\frac{D}{dt}\left( \frac{\partial L}{\partial \dot{q}}\right)   \right) \delta q
-\frac{D}{dt}\left(\frac{\partial L}{\partial v} \right)w+\mathrm{ad}^*_{v}\frac{\partial L}{\partial v}
w-\frac{\partial L}{\partial v}\omega_q(\dot{q},\delta q) \right) dt\\
=& \displaystyle\int_{t_0}^{t_1} \left( \left( \frac{\partial L}{\partial q}-\frac{D}{dt}\left( \frac{\partial L}{\partial \dot{q}}\right)   \right) \delta q -\frac{\partial L}{\partial v}\omega_q(\dot{q},\delta q)
+\left( \mathrm{ad}^*_{v}\frac{\partial L}{\partial v}-\frac{D}{dt}\left(\frac{\partial L}{\partial v} \right)\right)
w \right) dt\\
=& \displaystyle\int_{t_0}^{t_1}\mathcal{LP}(L)([q]^{(2)}\oplus [v]^{(1)})\cdot (\delta q\oplus \delta v),
\end{align*}
and the proof is complete.
\end{proof}

Therefore a curve $\dot{q}(t)\oplus v(t)$ of $\ell\Omega(Q;q_0,q_1)\oplus\Omega(V,q_0,q_1)$ satisfies the variational principle if and only if it satisfies the Lagrange--Poincar\'e equations
\begin{eqnarray}
\frac{\partial L}{\partial q}-\frac{D}{dt}\frac{\partial L}{\partial \dot{q}}-\left\langle\frac{\partial L}{\partial v},\omega_q(\dot{q},\cdot)\right\rangle &=&0,\label{EPV}\\
\mathrm{ad}^*_{v}\frac{\partial L}{\partial v}-\frac{D}{dt}\left(\frac{\partial L}{\partial v}\right)&=&0\label{EPV}.
\end{eqnarray}

\subsection{Reduction of Variations}

Given an action of a Lie group $G$ on an element $TQ\oplus V$ in the category $\mathfrak{LP}$, there is a reduction process analogous to the reduction of Lagrangians defined on tangent bundles. This process is established for $\mathfrak{RI}$, the smallest subcategory of $\mathfrak{LP}$ that contains tangent bundles and is closed under the quotiening operation in \cite{CMR}. The extension of this procedure to the whole $\mathfrak{LP}$ category given below.

We choose a connection $A$ in the principal bundle $Q\to Q/G$ and we consider the identification
\begin{align*}
\alpha^{TQ\oplus V}_A: (TQ\oplus V )/G &\to T(Q/G)\oplus \tilde{\mathfrak{g}}\oplus (V/G)\\
[\dot{q}\oplus v]_G&=\alpha_A(\dot{q})\oplus [v]_G.
\end{align*}
Furthermore, $\alpha^{TQ\oplus V}_A$ applied to sections
is a Lie algebra homomorphism between $\Gamma((TQ\oplus V )/G)$ equipped with the quotient Lie bracket of $\Gamma(TQ\oplus V)$ and $\Gamma (T(Q/G)\oplus \tilde{\mathfrak{g}}\oplus (V/G))$ with the Lie bracket induced by the additional structures of Theorem \ref{quotientLP}.

For a Lagrangian $L:TQ\oplus V \to \mathbb{R}$ invariant by the action of $G$, with the identification above, we define a reduced Lagrangian
$$L^{(G)}:T(Q/G)\oplus \tilde{\mathfrak{g}}\oplus (V/G) \to \mathbb{R}.$$

First, we recall a lemma of reconstruction of curves from \cite{CMR}:
\begin{lemma} \label{curves}
Let $\pi_G$ be the projection of $TQ\oplus V$ to $(TQ\oplus V)/G$, the map
\begin{equation*}
\begin{split}
\Omega(\alpha^{TQ\oplus V}_A\circ \pi_G):\Omega (TQ\oplus V)\to &\Omega(T(Q/G)\oplus\tilde{\mathfrak{g}}\oplus V/G)\\
\gamma(t) \mapsto & (\alpha^{TQ\oplus V}_A\circ \pi_G) (\gamma(t))
\end{split}
\end{equation*}
restricted to $\ell\Omega(Q;q_0)\oplus \Omega (V;q_0)$ is  inyective and the image of its restriction is $$\ell\Omega(Q/G;x_0)\oplus \Omega(\tilde{\mathfrak{g}};x_0)\oplus\Omega(V/G;x_0).$$
\end{lemma}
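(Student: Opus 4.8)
The plan is to reduce the whole statement to a standard reconstruction argument for curves in the principal bundle $Q\to Q/G$. The point is that $\alpha^{TQ\oplus V}_A\circ\pi_G$ carries no $\mathfrak{LP}$ structure at all: under the identification $(TQ\oplus V)/G\cong (TQ/G)\oplus(V/G)$ the map factors as $\alpha^{TQ\oplus V}_A=\alpha_A\oplus\mathrm{id}_{V/G}$, where $\alpha_A$ is a vector bundle diffeomorphism, so only the underlying bundle geometry (the connection $A$ and the fibrewise isomorphism $\pi_{V,G}$) is relevant. First I would simply compute the effect on a generic admissible curve $\gamma(t)=\dot q(t)\oplus v(t)$ of $\ell\Omega(Q;q_0)\oplus\Omega(V;q_0)$, where $q(0)=q_0$, $\tau(v(t))=q(t)$, and $\dot q(t)$ is the tangent lift. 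Using the definition of $\alpha_A$ one gets
$$(\alpha^{TQ\oplus V}_A\circ\pi_G)(\gamma(t))=\dot x(t)\oplus\bar\xi(t)\oplus[v(t)]_G,$$
with $x(t)=\pi(q(t))$, $\dot x(t)=T\pi(\dot q(t))$ its tangent lift, $\bar\xi(t)=[q(t),A(\dot q(t))]_G$, and all three components projecting to $x(t)$ with $x(0)=x_0$. This already shows the image lies in $\ell\Omega(Q/G;x_0)\oplus\Omega(\tilde{\mathfrak g};x_0)\oplus\Omega(V/G;x_0)$.

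For surjectivity I would argue by reconstruction. Given a target $\dot x(t)\oplus\bar\xi(t)\oplus[v]_G(t)$ in the claimed image, the requirements $T\pi(\dot q(t))=\dot x(t)$ and $[q(t),A(\dot q(t))]_G=\bar\xi(t)$ force the connection decomposition of $\dot q(t)$ to be horizontal part $\dot x(t)^h_{q(t)}$ and vertical part $\xi(t)^Q_{q(t)}$, where $\xi(t)\in\mathfrak g$ is the representative of $\bar\xi(t)$ at $q(t)$, i.e. $\bar\xi(t)=[q(t),\xi(t)]_G$. This yields the reconstruction ODE
$$\dot q(t)=\dot x(t)^h_{q(t)}+\xi(t)^Q_{q(t)},\qquad q(0)=q_0,$$
whose right-hand side is a smooth time-dependent vector field on $Q$ covering $\dot x(t)$; by existence and uniqueness of integral curves there is a unique $q(t)$ with $q(0)=q_0$, and by construction $\dot q(t)$ is its tangent lift. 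Once $q(t)$ is fixed, the fibrewise linear isomorphism $\pi_{V,G}\colon V_{q(t)}\to (V/G)_{x(t)}$ determines a unique $v(t)\in V_{q(t)}$ with $[v(t)]_G=[v]_G(t)$. Then $\gamma(t)=\dot q(t)\oplus v(t)$ lies in $\ell\Omega(Q;q_0)\oplus\Omega(V;q_0)$, and the identities $A(\dot q(t))=\xi(t)$, $T\pi(\dot q(t))=\dot x(t)$ show it maps back to the prescribed target.

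Injectivity then comes for free from the uniqueness in the same reconstruction. If two admissible curves $\gamma_1,\gamma_2$ share their image, their base data $x(t)$ and $\bar\xi(t)$ coincide, so $q_1(t)$ and $q_2(t)$ both solve the same ODE with the same initial value $q_0$, whence $q_1=q_2$; and then $v_1=v_2$ because $[v_1]_G=[v_2]_G$ over the common base curve and $\pi_{V,G}$ is a fibrewise isomorphism. The step I expect to be the main obstacle is the surjectivity/reconstruction step: one must check that the reconstruction vector field is globally well defined and smooth, which hinges on the representative $\xi(t)$ depending smoothly and equivariantly ($\xi_{g\cdot q}=\mathrm{Ad}_g\,\xi_q$) on the chosen point of the fibre, so that $\xi(t)^Q_{q(t)}$ is an honest vector field rather than merely a fibrewise assignment. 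It is also here that the prescribed initial point $q_0$ plays its essential role, as it removes exactly the fibre freedom and makes the restriction to curves with fixed initial base point necessary for both injectivity and a well-posed lift.
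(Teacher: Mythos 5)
Your argument is correct and is essentially the standard reconstruction proof; note that the paper does not actually prove Lemma \ref{curves} but recalls it from \cite{CMR}, where the same decomposition into a horizontal lift of $\dot x$ plus the vertical correction determined by $\bar\xi$, followed by the fibrewise lift of $[v]_G$ through $\pi_{V,G}$, is used. The only points worth making fully explicit are that the reconstruction ODE really lives on the pullback bundle $x^*Q\to[t_0,t_1]$ (the field $\xi(t)^Q$ is defined only on the fibre over $x(t)$, with the equivariance $\xi_{g\cdot q}=\mathrm{Ad}_g\,\xi_q$ you already identified), and that its solution exists on the whole interval --- which follows by writing $q(t)=g(t)\cdot\tilde q(t)$ with $\tilde q$ the horizontal lift of $x$ and $g(t)$ solving a globally solvable equation on the group $G$.
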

As a corollary of this lemma, $\Omega(\alpha^{TQ\oplus V}_A\circ \pi_G)$ is a bijection between $\ell\Omega(Q;q_0,q_1)\oplus \Omega (V;q_0,q_1)$ and $\ell\Omega(Q/G;x_0,x_1)\oplus \Omega(\tilde{\mathfrak{g}};x_0,x_1)\oplus\Omega(V/G;x_0,x_1),$ that is, the sets of curves considered in both variational problems. A similar result for allowed variations requires a careful study of the geometry of the reduced variations in $\tilde{\mathfrak{g}}\oplus V/G$ as follows.
\begin{theorem} \label{variations}
Let $\dot{q}\oplus v$ be a curve in $\ell\Omega(Q;q_0,q_1)\oplus \Omega (V;q_0,q_1)$ and $\dot{x}\oplus \bar{\xi}\oplus [v]_G=\alpha^{TQ\oplus V}_A\circ \pi_G(\dot{q}\oplus v)$. Then the map
\begin{align*}
T_{\dot{q}\oplus v}\Omega(\alpha^{TQ\oplus V}_A\circ \pi_G):\Delta^{\ell}_{q\oplus v}&\to \Delta_{x\oplus \bar{\xi}\oplus [v]_G}\\
\delta\dot{q}\oplus\delta v &\mapsto \left.\frac{d}{d\lambda}\right|_{\lambda =0} (\alpha^{TQ\oplus V}_A\circ \pi_G)(\dot{q}(t,\lambda)\oplus v(t,\lambda)),
\end{align*}
where $\dot{q}(t,\lambda)\oplus v(t,\lambda)$ is any deformation of $\dot{q}\oplus v$ inducing the variation $\delta\dot{q}\oplus\delta v$, is a linear isomorphism from $\Delta^{\ell}_{q\oplus v}$ onto $\Delta^{\ell}_{x\oplus \bar{\xi}\oplus [v]_G}$.
\end{theorem}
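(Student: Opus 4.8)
The plan is to exploit that $\Omega(\alpha^{TQ\oplus V}_A\circ\pi_G)$ is already a bijection of curve spaces (the corollary of Lemma \ref{curves}), so its tangent map at $\dot q\oplus v$ is a linear isomorphism of the ambient tangent spaces to those curve spaces. Restricted to $\Delta^\ell_{q\oplus v}$ this map is therefore automatically injective, and the whole content of the statement reduces to identifying its image with $\Delta^\ell_{x\oplus\bar\xi\oplus[v]_G}$, the set of admissible variations of the reduced $\mathfrak{LP}$-bundle of Theorem \ref{quotientLP} (obtained by applying the general definition of $\Delta^\ell$ to that bundle with its structures $\nabla^{\tilde{\mathfrak{g}}}$, $\omega^{\tilde{\mathfrak{g}}}$, $[\,,]^{\tilde{\mathfrak{g}}}$). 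Concretely, I would fix a deformation $\dot q(t,\lambda)\oplus v(t,\lambda)$ realizing a given $\delta\dot q\oplus\delta v\in\Delta^\ell_{q\oplus v}$, apply the reduction map, and differentiate at $\lambda=0$, splitting the outcome along $T(Q/G)\oplus\tilde{\mathfrak{g}}\oplus(V/G)$.

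The first two summands are handled by what is already available. The $T(Q/G)$-component is immediately $\delta\dot x$, the tangent lift of the free variation $\delta x=T\pi(\delta q)$. For the $\tilde{\mathfrak{g}}$-component one repeats verbatim the classical Lagrange--Poincar\'e computation recalled in Section \ref{usuallp}, since the $T(Q/G)\oplus\tilde{\mathfrak{g}}$ block of the reduction coincides with the tangent-bundle case; writing $\eta=A(\delta q)$ and $\bar\eta=[q,\eta]_G$ this gives $\delta\bar\xi=\frac{D^A}{dt}\bar\eta+[\bar\xi,\bar\eta]+\tilde B(\delta x,\dot x)$, which is precisely the $\tilde{\mathfrak{g}}$-part of $\frac{D^{\tilde{\mathfrak{g}}}}{dt}w'+[v',w']^{\tilde{\mathfrak{g}}}+\omega^{\tilde{\mathfrak{g}}}_x(\delta x,\dot x)$ with $v'=\bar\xi\oplus[v]_G$ and $w'=\bar\eta\oplus[w]_G$, read off from the formulas of Theorem \ref{quotientLP}.

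The real work is the $(V/G)$-component, and here the decisive point is that the pushed-forward variation must be measured as the covariant $\lambda$-variation with respect to the \emph{reduced} connection $\nabla^{\tilde{\mathfrak{g}}}$. Because $\nabla^{\tilde{\mathfrak{g}}}$ couples the two factors through the term $-[\omega]_G(X,\bar\xi)$, this covariant variation contributes, besides $\frac{D^{(A,H)}}{d\lambda}[v]_G$, an extra summand $-[\omega]_G(\delta x,\bar\xi)$ that is easy to overlook. I would then insert $\delta v=\frac{Dw}{dt}+[v,w]+\omega_q(\delta q,\dot q)$ and reduce everything to the three quotient connections: using the splittings $\dot q=\dot x^h+\xi^Q$ and $\delta q=\delta x^h+\eta^Q$ together with the tensoriality of the vertical quotient connection, one rewrites $[\tfrac{Dw}{dt}]_G=\frac{D^{(A,H)}}{dt}[w]_G+[\nabla^{(A,V)}]_{\bar\xi}[w]_G$ and $[\tfrac{Dv}{d\lambda}]_G=\frac{D^{(A,H)}}{d\lambda}[v]_G+[\nabla^{(A,V)}]_{\bar\eta}[v]_G$, while expanding $[\omega_q(\delta q,\dot q)]_G$ into its four bilinear pieces $[\omega]_G(\delta x,\dot x)+[\omega]_G(\delta x,\bar\xi)-[\omega]_G(\dot x,\bar\eta)-[\omega]_G(\bar\xi,\bar\eta)$. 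The two occurrences of $[\omega]_G(\delta x,\bar\xi)$ cancel, and what remains is exactly the $(V/G)$-part of the reduced admissible variation dictated by Theorem \ref{quotientLP}. I expect this bookkeeping of the coupling and curvature terms—in particular the cancellation of the $[\omega]_G(\delta x,\bar\xi)$ contribution—to be the main obstacle; everything else is the linear algebra of the $\alpha_A$ identification.

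Finally, surjectivity, hence the isomorphism onto $\Delta^\ell_{x\oplus\bar\xi\oplus[v]_G}$, I would obtain by reversing the parametrization: an arbitrary reduced admissible variation is determined by a free $\delta x$ and a curve $w'=\bar\eta\oplus[w]_G$ with vanishing endpoints, from which one reconstructs $\delta q=(\delta x)^h+\eta^Q$ (with $\bar\eta=[q,\eta]_G$) and the unique lift $w$ of $[w]_G$ along $q(t)$. Endpoints vanish on both sides simultaneously, so this sets up a bijection of parameters which, by the identity established in the previous paragraph, is implemented by $T_{\dot q\oplus v}\Omega(\alpha^{TQ\oplus V}_A\circ\pi_G)$; this shows the map is onto and completes the argument.
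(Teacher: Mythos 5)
Your proposal is correct and follows essentially the same route as the paper: compute the pushed-forward variation as the covariant $\lambda$-derivative with respect to the reduced connection $\nabla^{\tilde{\mathfrak{g}}}$ (picking up the coupling term $-[\omega]_G(\delta x,\bar{\xi})$), decompose $[\delta v]_G$ via the horizontal and vertical quotient connections and the four bilinear pieces of $[\omega_q(\delta q,\dot{q})]_G$, and observe the cancellation of $[\omega]_G(\delta x,\bar{\xi})$ so that the result matches the admissible variations of the quotient $\mathfrak{LP}$-bundle. The only (harmless) divergence is in the bijectivity step, where the paper lifts entire reduced deformations through Lemma \ref{curves} and differentiates, while you reconstruct the variation parameters $\delta q=(\delta x)^h+\eta^Q$ and the lift $w$ of $[w]_G$ directly; both yield the same inverse map.
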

\begin{proof} We separate this argument in different steps.

(1). First, we write explicitly the variations $\Delta^{\ell}_{x\oplus \bar{\xi}\oplus [v]_G}$, that is, the allowed variations of a curve $\dot{x}\oplus \bar{\xi}\oplus [v]_G$ in $T(Q/G)\oplus(\tilde{\mathfrak{g}}\oplus V/G)$ with additional structures $[,]^{\tilde{\mathfrak{g}}}$, $\omega^{\tilde{\mathfrak{g}}}$ and $\nabla^{\tilde{\mathfrak{g}}}$. Whereas $\delta x$ is free, the variation $\delta\bar{\xi}\oplus \delta [v]_G$ is given by
$$\delta\bar{\xi}\oplus \delta [v]_G=\frac{D^{\tilde{\mathfrak{g}}}}{dt}(\bar{\eta}\oplus[w]_G)+[\bar{\xi}\oplus [v]_G,\bar{\eta}\oplus[w]_G]^{\tilde{\mathfrak{g}}}-\omega^{\tilde{\mathfrak{g}}}(\dot{x},\delta x),$$
where $\bar{\eta}\oplus[w]_G$ is a curve in $\tilde{\mathfrak{g}}\oplus V/G$ with null endpoints such that $\tau_G(\bar{\eta}(t)\oplus[w]_G(t))=x(t)$ . From the explicit expression of the additional structures in Theorem \ref{quotientLP},
$$\frac{D^{\tilde{\mathfrak{g}}}}{dt}(\bar{\eta}\oplus[w]_G)=\nabla^{\tilde{\mathfrak{g}}}_{\dot{x}}(\bar{\eta}\oplus[w]_G)=\nabla_{\dot{x}}\bar{\eta}\oplus ([\nabla^{(A,H)}]_{G,\dot{x}}[w]_G-[\omega]_G(\dot{x},\bar{\eta}));$$
\begin{equation*}
\begin{split}
[\bar{\xi}\oplus [v]_G,\bar{\eta}\oplus[w]_G]^{\tilde{\mathfrak{g}}}=[\bar{\xi},\bar{\eta}]\oplus ([\nabla^{(A,V)}]_{G,\bar{\xi}}&[w]_G-[\nabla^{(A,V)}]_{G,\bar{\eta}}[v]_G\\ &-[\omega]_G(\bar{\xi},\bar{\eta})+[[v]_G,[w]_G]_G);
\end{split}
\end{equation*}

$$\omega^{\tilde{\mathfrak{g}}}(\dot{x},\delta x)=\tilde{B}^A(\dot{x},\delta x)\oplus [\omega]_G(\dot{x},\delta x)$$
Hence, the variation of $\bar{\xi}$ is given by
$$\delta\bar{\xi}=\frac{D \bar{\eta}}{dt}+[\bar{\xi},\bar{\eta}]-\tilde{B}^A(\dot{x},\delta x)$$
and the variation of $[v]_G$ is given by
\begin{align*}
\delta [v]_G=[\nabla^{(A,H)}]_{G,\dot{x}}[w]_G-[\omega]_G&(\dot{x},\bar{\eta})+[\nabla^{(A,V)}]_{G,\bar{\xi}}[w]_G-[\nabla^{(A,V)}]_{G,\bar{\eta}}[v]_G\\&-[\omega]_G(\bar{\xi},\bar{\eta})+[[v]_G,[w]_G]_G-[\omega]_G(\dot{x},\delta x).
\end{align*}

(2). We now prove that
$$T_{\dot{q}\oplus v}\Omega(\alpha^{TQ\oplus V}_A\circ \pi_G)(\Delta^{\ell}_{q\oplus v})\subset \Delta^{\ell}_{x\oplus \bar{\xi}\oplus [v]_G}.$$
A variation in $\Delta^{\ell}_{q\oplus v}$ is obtained as the derivative at $\lambda=0$ of a deformation  $\dot{q}(t,\lambda)\oplus v(t,\lambda)$ of $\dot{q}(t)\oplus v(t)$ such that for each $\lambda$, $\dot{q}_{\lambda}(t)\oplus v
_{\lambda}(t)$ is a curve belonging to $\ell\Omega(Q;q_0,q_1)\oplus \Omega (V;q_0,q_1)$. We study the variation in $T(Q/G)\oplus\tilde{\mathfrak{g}}\oplus V/G$ obtained from the deformation
$$\dot{x}(t,\lambda)\oplus \bar{\xi}(t,\lambda)\oplus [v(t,\lambda)]_G=\alpha^{TQ\oplus V}_A\circ \pi_G(\dot{q}(t,\lambda)\oplus v(t,\lambda)).$$
As seen in subsection \ref{usuallp}, the variation $\delta x$ is free and $\delta \dot{x}\in \Delta^{\ell}_x$. Furthermore, observe that the horizontal part of $$\left.\frac{d}{d\lambda}\right|_{\lambda =0}\left( \bar{\xi}(t,\lambda)\oplus [v(t,\lambda)]_G\right) $$ coincides with the horizontal lift of $\delta x$. Consequently, we calculate directly the covariant derivative with respect to the connection $\nabla^{\tilde{\mathfrak{g}}}$ of $\tilde{\mathfrak{g}}\oplus V/G$. In turn, this derivative can be expressed in terms of the connection as

$$\left.\frac{D^{\tilde{\mathfrak{g}}}}{d\lambda}\right|_{\lambda =0}\left( \bar{\xi}(t,\lambda)\oplus [v(t,\lambda)]_G\right)=\nabla^{\tilde{\mathfrak{g}}}_{\delta x}(\bar{\xi}\oplus [v]_G). $$
Since
\begin{align*}
\nabla^{\tilde{\mathfrak{g}}}_{\delta x}(\bar{\xi}\oplus [v]_G)&=\nabla^{A}_{\delta x}\bar{\xi}\oplus ([\nabla^{(A,H)}]_{G,\delta x}[v]_G-[\omega]_G(\delta x,\bar{\xi}))\\
&=\nabla^{A}_{\delta x}\bar{\xi}\oplus ([\nabla^{(A)}]_{G,\delta x\oplus\bar{\eta}}[v]_G-[\nabla^{(A,V)}]_{G,\bar{\eta}}[v]_G-[\omega]_G(\delta x,\bar{\xi}))\\
&=\nabla^{A}_{\delta x}\bar{\xi}\oplus ([\nabla^{(A)}]_{G,\delta q}[v]_G-[\nabla^{(A,V)}]_{G,\bar{\eta}}[v]_G-[\omega]_G(\delta x,\bar{\xi})),
\end{align*}
where $\bar{\eta}$ is a curve in  $\tilde{\mathfrak{g}}$ such that $\delta q=\delta x\oplus \bar{\eta}$, we conclude that for $\lambda=0$
$$\frac{D^{\tilde{\mathfrak{g}}}}{d\lambda}\left( \bar{\xi}(t,\lambda)\oplus [v(t,\lambda)]_G\right)=\frac{D\bar{\xi}}{d\lambda}\oplus \left( \left[ \frac{D}{d\lambda}v(t,\lambda)\right]_G-[\nabla^{(A,V)}]_{G,\bar{\eta}}[v]_G-[\omega]_G(\delta x,\bar{\xi})\right).$$
Similarly as in section \ref{usuallp}
$$\left.\frac{D}{d\lambda}\right|_{\lambda =0}\bar{\xi}=\frac{D \bar{\eta}}{dt}+[\bar{\xi},\bar{\eta}]-\tilde{B}^A(\dot{x},\delta x).$$ As $\delta q\oplus \delta v$ is an allowed variation of $TQ\oplus V$, $$\delta v=\left.\frac{D}{d\lambda}\right|_{\lambda =0}v=\frac{Dw}{dt}+[v,w]+\omega_q(\delta q,\dot{q}),$$
where $w(t)$ is a curve in $V$ such that $w(t_0)=w(t_1)=0$ and $\tau\circ w=q$. We study now the projection to  $V/G$ of each of the terms of $\delta v$:
\begin{align*}\left[ \frac{Dw}{dt}\right]_G=&[\nabla^A_{\dot{q}}w]_G=[\nabla^A_{\dot{x}\oplus\bar{\xi}}w]_G=[\nabla^{(A)}]_{G,\dot{x}\oplus\bar{\xi}}[w]_G\\=&[\nabla^{(A,H)}]_{G,\dot{x}}[w]_G+[\nabla^{(A,V)}]_{G,\bar{\xi}}[w]_G;
\end{align*}
$$[[v,w]]_G=[[v]_G,[w]_G]_G;$$
\begin{align*}
[\omega(\dot{q},\delta q)]_G&=[\omega]_G([\dot{q}]_G,[\delta q]_G)=[\omega]_G(\dot{x}\oplus \bar{\xi},\delta x\oplus \bar{\eta})\\&=[\omega]_G(\dot{x},\delta x)+[\omega]_G(\bar{\xi},\delta x)
+[\omega]_G(\dot{x},\bar{\eta})+[\omega]_G(\bar{\xi}, \bar{\eta}).
\end{align*}
Consequently,
\begin{align*}\label{projvarv}
\left[\frac{Dv}{d\lambda}\big\vert_{\lambda=0}\right]_G=&[\nabla^{(A,H)}]_{G,\dot{x}}[w]_G+[\nabla^{(A,V)}]_{G,\bar{\xi}}[w]_G+[[v]_G,[w]_G]_G\\&-[\omega]_G(\dot{x},\delta x)+[\omega]_G(\delta x,\bar{\xi})
-[\omega]_G(\dot{x},\bar{\eta})-[\omega]_G(\bar{\xi}, \bar{\eta}).
\end{align*}
Finally, we substitute this expression into the covariant derivative of $\bar{\xi}(t,\lambda)\oplus [v(t,\lambda)]_G$ and obtain the variation $\delta\bar{\xi}\oplus \delta [v]_G$.

\begin{equation*}
\begin{aligned}
\delta\bar{\xi}\oplus \delta [v]_G=\frac{D^{\tilde{\mathfrak{g}}}}{d\lambda}\left( \bar{\xi}(t,\lambda)\oplus [v(t,\lambda)]_G\right)&=\left( \frac{D \bar{\eta}}{dt}+[\bar{\xi},\bar{\eta}]-\tilde{B}^A(\dot{x},\delta x)\right) \oplus \\ \big( [\nabla^{(A,H)}]_{G,\dot{x}}[w]_G+[\nabla^{(A,V)}]_{G,\bar{\xi}}[w]_G&-[\nabla^{(A,V)}]_{G,\bar{\eta}}[v]_G +[[v]_G,[w]_G]_G \\&-[\omega]_G(\dot{x},\delta x)
-[\omega]_G(\dot{x},\bar{\eta})-[\omega]_G(\bar{\xi}, \bar{\eta}) \big).
\end{aligned}
\end{equation*}
The variation obtained via $\alpha^{TQ\oplus V}_A\circ \pi_G$ coincides with the variation of $\bar{\xi}(t,\lambda)\oplus [v(t,\lambda)]_G$ obtained from $\bar{\eta}\oplus[w]_G$ in $T(Q/G)\oplus\tilde{\mathfrak{g}}\oplus V/G$.

(3). The map $T_{\dot{q}\oplus v}\Omega(\alpha^{TQ\oplus V}_A\circ \pi_G)$ does not depend on the chosen deformation and is clearly linear. Then, it only remains to prove its bijectivity. Let
$$\dot{x}(t,\lambda)\oplus \bar{\xi} (t,\lambda)\oplus [v]_G(t,\lambda)$$
be an allowed deformation of $\dot{x}(t)\oplus \bar{\xi} (t)\oplus [v]_G(t)$. From Lemma \ref{curves}, for each  $\lambda$ there is a unique curve $\dot{q}_{\lambda}(t)\oplus v_{\lambda}(t)$ in $\ell\Omega(Q;q_0,q_1)\oplus\Omega(V,q_0,q_1)$ such that its image by $\Omega(\alpha_A\circ \pi_G)$ is $\dot{x}(t,\lambda)\oplus \bar{\xi} (t,\lambda)\oplus [v]_G(t,\lambda)$. Since $\dot{x}(t,0)\oplus \bar{\xi} (t,0)\oplus [v]_G(t,0)=\dot{x}(t)\oplus \bar{\xi} (t)\oplus [v]_G(t)$, uniqueness implies that $\dot{q}_{0}(t)\oplus v_{0}(t)=\dot{q}(t)\oplus v(t)$ and $q(t,\lambda)=q_{\lambda}(t)$ is a deformation of $\dot{q}(t)\oplus v(t)$. This allows to define an inverse function of $T_{\dot{q}}\Omega(\alpha_A\circ \pi_G)$ deriving with respect to $\lambda$ at $\lambda=0$.
\end{proof}

The reduction in the $\mathfrak{LP}$ category can be now stated as follows.

\begin{theorem} \label{reduction}
Given $G$-invariant Lagrangian $L:TQ\oplus V\to \mathbb{R}$ and a curve $\dot{q}(t)\oplus v(t)$ in $\ell\Omega(Q;q_0,q_1)\oplus \Omega (V;q_0,q_1)$, the following are equivalent
\begin{enumerate} [(i)]
\item The curve $\dot{q}(t)\oplus v(t)$ is a critical point of the action $\int^{t_1}_{t_0} L(\dot{q}(t),v(t))dt$
with allowed variations $\Delta^{\ell}_{q\oplus v}$.
\item The curve $\dot{q}(t)\oplus v(t)$ satisfies the Lagrange--Poincar\'e equations \\ $\mathcal{LP}(L)(\dot{q}\oplus v)=0.$
\item The curve $$\dot{x}(t)\oplus \bar{\xi}(t) \oplus [v]_G(t)=\alpha^{TQ\oplus V}_A\circ \pi_G (\dot{q}(t)\oplus v(t))$$ in $\ell\Omega(Q/G;x_0,x_1)\oplus \Omega(\tilde{\mathfrak{g}};x_0,x_1)\oplus\Omega(V/G;x_0,x_1)$ is a critical point of the action $\int^{t_1}_{t_0} L^{(G)}(\dot{x}(t),\bar{\xi}(t),[v]_G(t))dt$
with allowed variations $\Delta^{\ell}_{x\oplus \bar{\xi}\oplus [v]_G}$.
\item The curve $\dot{x}(t)\oplus \bar{\xi} \oplus [v]_G(t)$ satisfies the Lagrange--Poincar\'e equations $\mathcal{LP}(L^{(G)})(\dot{x}\oplus \bar{\xi} \oplus [v]_G)=0.$
\end{enumerate}
\end{theorem}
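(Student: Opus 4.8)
The plan is to establish the four-fold equivalence as a closed square of implications assembled from three ingredients already available: Theorem~\ref{ecs.lp}, which identifies criticality of the action with the vanishing of the Lagrange--Poincar\'e operator on \emph{any} object of $\mathfrak{LP}$; Theorem~\ref{quotientLP}, which guarantees that the reduced bundle $T(Q/G)\oplus\tilde{\mathfrak{g}}\oplus(V/G)$ is again an object of $\mathfrak{LP}$; and Theorem~\ref{variations} together with the corollary of Lemma~\ref{curves}, which supply the isomorphisms of curves and of admissible variations relating the two variational problems. Concretely, I would prove $(i)\Leftrightarrow(ii)$ and $(iii)\Leftrightarrow(iv)$ as two instances of one argument, and then bridge the two worlds through $(i)\Leftrightarrow(iii)$.

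For $(i)\Leftrightarrow(ii)$ I would apply Theorem~\ref{ecs.lp} verbatim to the $\mathfrak{LP}$-bundle $TQ\oplus V$ and the Lagrangian $L$: it produces the operator $\mathcal{LP}(L)$ and the identity $d\mathcal{S}(L)\cdot(\delta\dot q\oplus\delta v)=\int_{t_0}^{t_1}\mathcal{LP}(L)([q]^{(2)}\oplus[v]^{(1)})\cdot(\delta q\oplus\delta v)$ for every $\delta\dot q\oplus\delta v\in\Delta^{\ell}_{q\oplus v}$. Since an admissible variation is parametrized by the freely and independently chosen data $\delta q$ and $w$ (each with vanishing endpoints) through $\delta v=\frac{Dw}{dt}+[v,w]+\omega_q(\delta q,\dot q)$, the fundamental lemma of the calculus of variations isolates the two summands of the integrand: testing against arbitrary $\delta q$ forces the $T^*Q$-component to vanish, and testing against arbitrary $w$ forces the $V^*$-component to vanish, so criticality is equivalent to $\mathcal{LP}(L)=0$. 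The equivalence $(iii)\Leftrightarrow(iv)$ is literally the same statement applied to the reduced bundle, which is a legitimate object of $\mathfrak{LP}$ by Theorem~\ref{quotientLP}, and to the reduced Lagrangian $L^{(G)}$.

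The bridge $(i)\Leftrightarrow(iii)$ rests on the $G$-invariance of $L$. Because $L$ descends to $L^{(G)}$ along $\alpha^{TQ\oplus V}_A\circ\pi_G$, the two actions agree on corresponding curves, $S(L)(\dot q\oplus v)=S(L^{(G)})\big(\Omega(\alpha^{TQ\oplus V}_A\circ\pi_G)(\dot q\oplus v)\big)$, and by the corollary of Lemma~\ref{curves} this is an equality of functionals over bijective spaces of curves. I would fix a deformation $\dot q(t,\lambda)\oplus v(t,\lambda)$ inducing a variation $\delta\dot q\oplus\delta v\in\Delta^{\ell}_{q\oplus v}$; its image under $\Omega(\alpha^{TQ\oplus V}_A\circ\pi_G)$ is a deformation of the reduced curve inducing, by definition, the variation $T_{\dot q\oplus v}\Omega(\alpha^{TQ\oplus V}_A\circ\pi_G)(\delta\dot q\oplus\delta v)$. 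Differentiating the equality of actions at $\lambda=0$ and using the chain rule yields $d\mathcal{S}(L)\cdot(\delta\dot q\oplus\delta v)=d\mathcal{S}(L^{(G)})\cdot T_{\dot q\oplus v}\Omega(\alpha^{TQ\oplus V}_A\circ\pi_G)(\delta\dot q\oplus\delta v)$. Since Theorem~\ref{variations} asserts that $T_{\dot q\oplus v}\Omega(\alpha^{TQ\oplus V}_A\circ\pi_G)$ is a linear isomorphism of $\Delta^{\ell}_{q\oplus v}$ onto $\Delta^{\ell}_{x\oplus\bar\xi\oplus[v]_G}$, the first variation of $L$ vanishes on all admissible variations upstairs if and only if the first variation of $L^{(G)}$ vanishes on all admissible variations downstairs, which is exactly $(i)\Leftrightarrow(iii)$.

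The analytic heavy lifting has already been absorbed into Theorem~\ref{variations}, so the principal obstacle here is conceptual bookkeeping rather than fresh computation: one must use the \emph{surjectivity} of the isomorphism of variation spaces onto the admissible class (not merely an inclusion into it), because only surjectivity guarantees that vanishing of $d\mathcal{S}(L^{(G)})$ against \emph{every} reduced admissible variation is implied by vanishing of $d\mathcal{S}(L)$ upstairs. This is precisely what Theorem~\ref{variations} provides. A secondary point to verify is that no boundary term spoils the chain-rule identity for the first variations; this holds because the reconstructed curves share the fixed endpoints $q_0,q_1$ (equivalently $x_0,x_1$) and the deforming data $w$, $\bar\eta$, $[w]_G$ have vanishing endpoints, so every integration by parts closes without residual terms. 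With these points settled, the four statements collapse into a single equivalence.
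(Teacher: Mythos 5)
Your proposal is correct and is essentially the argument the paper intends: Theorem~\ref{reduction} is stated without an explicit proof precisely because it is assembled from Theorem~\ref{ecs.lp} (applied both to $TQ\oplus V$ and, via Theorem~\ref{quotientLP}, to the reduced bundle) together with the bijection of curves from Lemma~\ref{curves} and the isomorphism of admissible variations from Theorem~\ref{variations}, which is exactly the square of implications you build. Your explicit remark that the \emph{surjectivity} of the variation map is what makes the bridge $(i)\Leftrightarrow(iii)$ work in both directions is a correct and worthwhile emphasis of the role Theorem~\ref{variations} plays.
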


\subsection{Reduction by Stages in the $\mathfrak{LP}$ category}
Next, we specify how an isomorphism in the category $\mathfrak{LP}$ induces an equivalence of variational principles. Afterwards we shall see that this implies that reduction by stages is equivalent to direct reduction in the whole  $\mathfrak{LP}$ category.
\begin{proposition}\label{isomorphism}
Let $TQ_1\oplus V_1$ and $TQ_2\oplus V_2$ be Lagrange--Poincar\'e bundles and $f:TQ_1\oplus V_1 \to TQ_2\oplus V_2$ an isomorphism in the $\mathfrak{LP}$ category. Let $L_1$ and $L_2$ be Lagrangians defined respectively in these bundles such that $L_1=L_2\circ f$. Then a curve $\dot{q}_1\oplus v_1$ satisfies the variational principle for $L_1$ if and only if $f(\dot{q}_1\oplus v_1)$ satisfies the variational principle for $L_2$.
\end{proposition}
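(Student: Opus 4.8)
The plan is to show that an $\mathfrak{LP}$-isomorphism $f$ transports the entire variational structure---curves, admissible variations, and the value of the action---so that criticality is preserved. The key observation is that all the data entering the variational principle of Theorem \ref{ecs.lp} (the covariant derivative $D/dt$, the fiberwise bracket $[,]$, and the $2$-form $\omega$) are precisely the structures that $f$ is required to intertwine by morphism condition \ref{inv}. First I would use the fact that $f$ restricts to $Tf_0$ on the tangent factor, so that $f$ sends a lifted curve $\dot{q}_1(t)$ to $\dot{q}_2(t)$ where $q_2(t)=f_0(q_1(t))$; consequently $f$ maps $\ell\Omega(Q_1;q_0,q_1)\oplus\Omega(V_1;q_0,q_1)$ bijectively onto the corresponding space of curves in $TQ_2\oplus V_2$, with inverse induced by $f^{-1}$.

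Next I would verify that $f$ carries the admissible variations $\Delta^{\ell}_{q_1\oplus v_1}$ onto $\Delta^{\ell}_{f(q_1\oplus v_1)}$. Recall that a variation has the form $\delta v_1 = \tfrac{D_1 w_1}{dt}+[v_1,w_1]_1+(\omega_1)_{q_1}(\delta q_1,\dot q_1)$ with $w_1$ a curve in $V_1$ vanishing at the endpoints. Applying $f$ and invoking the three intertwining identities of condition \ref{inv}, namely $f(D_1 w_1/dt)=D_2 f(w_1)/dt$, $f([v_1,w_1]_1)=[f(v_1),f(w_1)]_2$, and $f(\omega_1(\delta q_1,\dot q_1))=\omega_2(f(\delta q_1),f(\dot q_1))$, the image $f(\delta v_1)$ is exactly a variation of the form prescribed in $\Delta^{\ell}_{f(q_1\oplus v_1)}$, with $w_2=f(w_1)$ (still vanishing at the endpoints since $f$ is a fiberwise linear isomorphism covering $f_0$). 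Because $f$ is invertible in $\mathfrak{LP}$ and $f^{-1}$ satisfies the same intertwining relations, this correspondence of variations is a bijection. Here it is convenient to phrase the argument through Theorem \ref{variations}, or simply to note directly that a deformation of $\dot q_1\oplus v_1$ maps under $f$ to a deformation of $f(\dot q_1\oplus v_1)$ and vice versa.

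The final step is purely formal: since $L_1=L_2\circ f$, for any deformation $(\dot q_1\oplus v_1)(t,\lambda)$ we have
\[
S(L_1)\big((\dot q_1\oplus v_1)(t,\lambda)\big)=\int_{t_0}^{t_1}L_2\big(f((\dot q_1\oplus v_1)(t,\lambda))\big)\,dt=S(L_2)\big(f((\dot q_1\oplus v_1)(t,\lambda))\big),
\]
so the two actions agree on corresponding deformations. Differentiating at $\lambda=0$ and using that $f$ induces a bijection between deformations (hence between the families of admissible variations), the first variation $d\mathcal{S}(L_1)\cdot(\delta\dot q_1\oplus\delta v_1)$ vanishes for all admissible variations if and only if $d\mathcal{S}(L_2)\cdot(\delta\dot q_2\oplus\delta v_2)$ vanishes for all admissible variations. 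This is exactly the assertion that $\dot q_1\oplus v_1$ is critical for $L_1$ precisely when $f(\dot q_1\oplus v_1)$ is critical for $L_2$.

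I expect the main obstacle to be bookkeeping rather than conceptual: one must check carefully that the endpoint and basepoint conditions are respected under $f$ (so that admissible variations go to admissible variations and not merely to variations), and that the correspondence of variations is genuinely onto, which requires using $f^{-1}\in\mathfrak{LP}$. A subtle point worth flagging is that the decomposition of $\delta q_1$ into the base variation $\delta x_1$ and the vertical piece is not needed here, since we work intrinsically on $TQ_i\oplus V_i$ before any further reduction; the only structures invoked are $D/dt$, $[,]$, and $\omega$, all of which $f$ preserves by hypothesis. Once the bijection of variations and the equality of actions are established, the equivalence of critical points follows immediately.
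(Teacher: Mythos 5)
Your proposal is correct and follows essentially the same route as the paper: establish that $f$ maps the space of admissible curves bijectively onto the corresponding space for $TQ_2\oplus V_2$, then use the three intertwining identities of morphism condition \ref{inv} to show that $f$ carries $\Delta^{\ell}_{q\oplus v}$ onto $\Delta^{\ell}_{f_0(q)\oplus f(v)}$ (with surjectivity supplied by $f^{-1}$), and conclude from $L_1=L_2\circ f$. The only difference is that you spell out the final equality of actions and first variations, which the paper leaves implicit after the opening sentence of its proof.
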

\begin{proof}
It is enough to see that $f$ induces a bijection between the sets of curves $$\ell\Omega(Q_1;q_0,q_1)\oplus \Omega(V_1;q_0,q_1)\text{ and }\ell\Omega(Q_2;f_0(q_0),f_0(q_1))\oplus \Omega(V_2;f_0(q_0),f_0(q_1)),$$ and between the sets of variations $\Delta^{\ell}_{q\oplus v}$ and $\Delta^{\ell}_{f_0(q)\oplus f(v)}$.

On one hand, observe that $$f(\dot{q}\oplus v)=f(\dot{q})\oplus f(v)=Tf_0(\dot{q})\oplus f(v).$$ Since both, $Tf_0(\dot{q})$ and $f(v)$, project to $f_0(q)$, we conclude that $f(\dot{q}\oplus v)$ is an allowed curve in  $TQ_2\oplus V_2$ and $$\ell\Omega(Q_1;q_0,q_1)\oplus \Omega(V_1;q_0,q_1)\hookrightarrow\ell\Omega(Q_2;f_0(q_0),f_0(q_1))\oplus \Omega(V_2;f_0(q_0),f_0(q_1)).$$

On the other hand, let $\delta \dot{q}\oplus \delta v\in \Delta^{\ell}_{q\oplus v}$, that is, $$\delta v= \frac{D_1w}{dt}+[v,w]_1-\omega_{1,q}(\dot{q},\delta q) $$ and let $q(t,\lambda)\oplus v(t,\lambda)$ be a deformation producing this variation, we write
$$\left.\frac{d}{d\lambda}\right|_{\lambda =0}f(\dot{q}(t,\lambda))=Tf_0\left( \left.\frac{d}{d\lambda}\right|_{\lambda =0}\dot{q}(t,\lambda)\right)=Tf_0(\delta q)=f(\delta q),$$
\begin{equation*}
\begin{split}
\left.\frac{D_2}{d\lambda}\right|_{\lambda =0}f(v(t,\lambda))&=f\left( \left.\frac{D_1}{d\lambda}\right|_{\lambda =0}v(t,\lambda)\right)=f\left( \frac{D_1w}{dt}+[v,w]_1-\omega_{1,q}(\dot{q},\delta q) \right)\\&=\frac{D_2(f(w))}{dt}+[f(v),f(w)]_2-\omega_{2,f(q)}(f(\dot{q}),f(\delta q)),
\end{split}
\end{equation*}
where we have used that $f$ commutes with the additional structures of $TQ_1\oplus V_1$ and $TQ_2\oplus V_2$. The variation found lies in $\Delta^{\ell}_{f_0(q)\oplus f(v)}$ and, consequently, $$\Delta^{\ell}_{q\oplus v}\hookrightarrow\Delta^{\ell}_{f_0(q)\oplus f(v)}.$$

Finally, as $f$ is an isomorphism, the opposite injections are obtained analogously from $f^{-1}$.
\end{proof}

We now suppose that $L:TQ\oplus V$ is a Lagrangian invariant by the action of $G$ in $TQ\oplus V$, $N$ is a normal subgroup of $G$, and $K=G/N$ is the quotient group. Since the $\mathfrak{LP}$ category is closed under reduction, it is possible to reduce $L$ by the group  $N$ and afterwards reduce by the group $K$. The natural question is whether this is equivalent to directly reduce by $G$ or not. Let $A_N$ be a principal connection on $Q\to Q/N$, $A_{G/N}$ be a principal connection on $Q/N\to (Q/N)/(G/N)$, and $A_G$ be a principal connection on $Q\to Q/G$. These connections are said to be compatible if for all $v_q\in T_qQ$ and all $q\in Q$
$$A_G(v_q)=0 \Leftrightarrow A_N(v_q)=0 \text{ and } A_{G/N}(T\pi_N(v_q))=0.$$

\begin{proposition}
\cite{CMR} (Section 6.3) If the connections $A_G$, $A_N$ and $A_{G/N}$ are compatible, the map
$$\beta^{TQ\oplus V}_{(A_N,A_{G/N},A_G)}=\alpha^{T(Q/N)\oplus\tilde{\mathfrak{n}}\oplus(V/N)}_{A_{G/N}} \circ [\alpha^{TQ\oplus V}_{A_N}]_{G/N}  \circ i^{TQ\oplus V}_{(G/N)} \circ (\alpha^{TQ\oplus V}_{A_G})^{-1},$$
where $i^{TQ\oplus V}_{(G/N)}:(TQ\oplus V)/G\to((TQ\oplus V)/N)/(G/N)$ denotes the natural identification, is a $\mathfrak{LP}$ isomorphism from $T(Q/G)\oplus\tilde{\mathfrak{g}}\oplus(V/G)$ onto $$T((Q/N)/(G/N))\oplus\tilde{\mathfrak{k}}\oplus(\tilde{\mathfrak{n}}\oplus(V/N))/(G/N),$$
where $\mathfrak{n}$ is the Lie algebra of $N$ and $\mathfrak{k}$ is the Lie algebra of $K$.
\end{proposition}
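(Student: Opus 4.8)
The plan is to exhibit $\beta^{TQ\oplus V}_{(A_N,A_{G/N},A_G)}$ as a composition of four maps, each an isomorphism respecting the $\mathfrak{LP}$ structure, and then to invoke the fact that $\mathfrak{LP}$ isomorphisms are closed under composition and inversion. Reading the definition from right to left, the factor $(\alpha^{TQ\oplus V}_{A_G})^{-1}$ is the inverse of the $G$-reduction map of Theorem \ref{quotientLP}, the map $i^{TQ\oplus V}_{(G/N)}$ is the canonical identification of the iterated quotient, the map $[\alpha^{TQ\oplus V}_{A_N}]_{G/N}$ is the $K$-quotient of the $N$-reduction map, and $\alpha^{T(Q/N)\oplus\tilde{\mathfrak{n}}\oplus(V/N)}_{A_{G/N}}$ is the final $K$-reduction map. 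The first and last factors are $\mathfrak{LP}$ isomorphisms by Theorem \ref{quotientLP} together with the remark that each $\alpha^{\cdot}_A$ is a Lie algebra homomorphism on sections; indeed, a vector bundle isomorphism $f$ with $f\vert_{TQ}=Tf_0$ that intertwines the section brackets \eqref{corchetesecciones} automatically preserves $[,]$, $\omega$ and $\nabla$ separately (specialize to $X_i=0$, to $w_i=0$, and to mixed arguments), so preservation of the section bracket is all that must be checked.

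For the middle two factors I would argue as follows. The identification $i^{TQ\oplus V}_{(G/N)}$ is an $\mathfrak{LP}$ isomorphism because the quotient bracket, quotient $2$-form and quotient connection on $(TQ\oplus V)/G$ are built from $G$-invariant data, and a section (resp. form, resp. connection) is $G$-invariant precisely when it is $N$-invariant and its $N$-quotient is $(G/N)$-invariant; hence the structures assembled along the iterated quotient coincide with those on the direct quotient, so the tautological identification is structure preserving. For $[\alpha^{TQ\oplus V}_{A_N}]_{G/N}$ I first note that $\alpha^{TQ\oplus V}_{A_N}$ is $K$-equivariant, using the $(G/N)$-invariance of the horizontal distribution of $A_N$ underlying the reduction-by-stages setup, so that its $K$-quotient is defined; being the quotient of a $K$-equivariant $\mathfrak{LP}$ isomorphism, it is again an $\mathfrak{LP}$ isomorphism by the same section-bracket criterion applied to $K$-invariant sections (cf. the last sentence of Subsection \ref{req-bundles}).

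The hard part, and the only place where the hypothesis is used, is verifying that $\beta$ is a morphism in the strict sense that it carries the tangent summand into the tangent summand, $\beta(T(Q/G))\subset T((Q/N)/(G/N))$ with $\beta\vert_{T(Q/G)}=T\beta_0$ for the canonical diffeomorphism $\beta_0:Q/G\to (Q/N)/(G/N)$. Here I would trace the $A_G$-horizontal lift $X^{h_G}\in T_qQ$ of a vector $X\in T_x(Q/G)$ through the four factors. Under $(\alpha^{TQ\oplus V}_{A_G})^{-1}$ followed by $i^{TQ\oplus V}_{(G/N)}$ it becomes the class of $X^{h_G}$ (with vanishing $V$-component) in the iterated quotient; applying $\alpha^{TQ\oplus V}_{A_N}$ produces $T\pi_N(X^{h_G})\oplus[q,A_N(X^{h_G})]_N\oplus 0$, and the compatibility $A_G(X^{h_G})=0\Rightarrow A_N(X^{h_G})=0$ kills the $\tilde{\mathfrak{n}}$-term; applying $\alpha^{T(Q/N)\oplus\tilde{\mathfrak{n}}\oplus(V/N)}_{A_{G/N}}$ to the surviving $T\pi_N(X^{h_G})$ yields a $T((Q/N)/(G/N))$-term together with $[\,\cdot\,,A_{G/N}(T\pi_N(X^{h_G}))]_K$, and the second half of compatibility, $A_{G/N}(T\pi_N(X^{h_G}))=0$, kills the $\tilde{\mathfrak{k}}$-term. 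Thus $\beta(X)=T\pi_{G/N}(T\pi_N(X^{h_G}))=T\beta_0(X)$ lies in the tangent summand, as required. Without compatibility these two vertical terms would be non-zero and $\beta$ would tilt $T(Q/G)$ out of the tangent summand, so the condition is exactly what promotes the composed structure-preserving bijection to a morphism of the category.

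Finally, the companion inclusion $\beta(\tilde{\mathfrak{g}}\oplus(V/G))\subset \tilde{\mathfrak{k}}\oplus(\tilde{\mathfrak{n}}\oplus(V/N))/(G/N)$ needs no hypothesis: a $G$-vertical vector projects under $T\pi_N$ to a $K$-vertical one and therefore lands, after $\alpha^{\cdot}_{A_{G/N}}$, in $\tilde{\mathfrak{k}}$ together with an $\tilde{\mathfrak n}$-contribution, while elements of $V/G$ manifestly map to elements of $(\tilde{\mathfrak n}\oplus(V/N))/(G/N)$. Since all four factors are invertible, the inverse of $\beta$ is the analogous composition read backwards, so $\beta$ is an $\mathfrak{LP}$ isomorphism onto the stated bundle, and I expect the splitting-preservation of the previous paragraph to be the only genuinely delicate point.
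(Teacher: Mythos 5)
The paper does not actually prove this proposition: it is imported verbatim from \cite{CMR} (Section 6.3) with no argument given, so there is no in-house proof to compare against. Your reconstruction is sound and fits the framework the paper does set up: the reduction to a single criterion --- a vector bundle isomorphism with $f\vert_{TQ_1}=Tf_0$ and $f(V_1)\subset V_2$ that intertwines the section brackets \eqref{corchetesecciones} automatically preserves $[,]$, $\omega$ and $\nabla$ by specializing the arguments --- is correct and is a genuine streamlining, consistent with the paper's remark that each $\alpha^{TQ\oplus V}_A$ is a Lie algebra homomorphism on sections for the structures of Theorem \ref{quotientLP}. Your identification of the tangent-summand compatibility as the only place where the hypothesis on the three connections is used, and the explicit trace of the $A_G$-horizontal lift through the four factors, matches how \cite{CMR} actually uses compatibility. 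One caveat worth making explicit: the $K$-equivariance of $\alpha^{TQ\oplus V}_{A_N}$ (needed for $[\alpha^{TQ\oplus V}_{A_N}]_{G/N}$ to exist at all) requires $A_N$ to be invariant under all of $G$, not just $N$; this is part of the standing assumptions of reduction by stages in \cite{CMR} but is not a consequence of the compatibility condition as stated here, so it should be recorded as a hypothesis rather than attributed to ``the setup''. With that assumption made explicit, your argument is complete.
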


Clearly,
$$L^{(G)}=(L^{(N)})^{(K)}\circ \beta^{TQ\oplus V}_{(A_N,A_{G/N},A_G)}$$ and Proposition \ref{isomorphism} concludes that $L^{(G)}$ and $(L^{(N)})^{(K)}$ pose equivalent problems. More accurately,


\begin{theorem} \label{stages}
Let $N<G$ be a normal subgroup and $K=G/H$. Given $G$-invariant Lagrangian $L:TQ\oplus V\to \mathbb{R}$ and a curve $\dot{q}(t)\oplus v(t)$ in $\ell\Omega(Q;q_0,q_1)\oplus \Omega (V;q_0,q_1)$, the following are equivalent:
\begin{enumerate} [\em(i)\em]
\item The curve $$\dot{y}(t)\oplus \bar{\eta}(t) \oplus [v]_N(t)=\alpha^{TQ\oplus V}_{A_N}\circ \pi_N(\dot{q}(t)\oplus v(t))$$ in $\ell\Omega(Q/N;y_0,y_1)\oplus \Omega(\tilde{\mathfrak{n}};y_0,y_1)\oplus \Omega (V/N;y_0,y_1)$ is a critical point of the action $\int^{t_1}_{t_0} L^{(N)}(\dot{y}(t),\bar{\eta}(t),[v]_N(t))dt$
with allowed variations $\Delta^{\ell}_{y\oplus \bar{\eta}\oplus [v]_N}$.
\item The curve $\dot{y}(t)\oplus \bar{\eta}(t) \oplus [v]_N(t)$ satisfies the Lagrange--Poincar\'e equations $\mathcal{LP}(L^{(N)})(\dot{y}\oplus \bar{\eta} \oplus [v]_N)=0.$
\item The curve $$\dot{z}(t)\oplus\bar{\kappa}(t)\oplus [\bar{\eta}]_K(t)\oplus [[v]_N]_K(t)=\alpha^{T(Q/N)\oplus \tilde{\mathfrak{n}}\oplus V/N}_{A_{G/N}}\circ \pi_{G/N}(\dot{y}(t)\oplus \bar{\eta}(t) \oplus [v]_N(t))$$ in $\ell\Omega((Q/N)/K;z_0,z_1) \oplus \Omega(\tilde{\mathfrak{k}};z_0,z_1)\oplus \Omega(\tilde{\mathfrak{n}}/K;z_0,z_1)\oplus\Omega((V/N)/K;z_0,z_1)$ is a critical point of the action $$\int^{t_1}_{t_0} (L^{(N)})^{(K)}(\dot{z}(t),\bar{\kappa}(t),[\bar{\eta}]_K(t),[[v]_N]_K(t))dt$$
with allowed variations $\Delta^{\ell}_{z\oplus \kappa \oplus [\bar{\eta}]_K,[[v]_N]_K}$.
\item The curve $\dot{z}(t)\oplus \bar{\kappa}(t) \oplus [\bar{\eta}]_K(t) \oplus [[v]_N]_K(t)$ satisfies the Lagrange--Poincar\'e equations $\mathcal{LP}((L^{(N)})^{(K)})(\dot{z}\oplus \bar{\kappa} \oplus [\bar{\eta}]_K \oplus [[v]_N]_K)=0.$
\end{enumerate}
\end{theorem}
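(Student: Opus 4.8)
The plan is to treat the four conditions as two pairs joined by a single cross-stage equivalence. The equivalences (i)$\Leftrightarrow$(ii) and (iii)$\Leftrightarrow$(iv) are not specific to reduction by stages: each is a direct instance of Theorem~\ref{ecs.lp}, which asserts that a curve satisfies the variational principle for an $\mathfrak{LP}$-Lagrangian precisely when it annihilates the Lagrange--Poincar\'e operator. To invoke it I would first record, using Theorem~\ref{quotientLP}, that both $T(Q/N)\oplus\tilde{\mathfrak{n}}\oplus(V/N)$ and $T((Q/N)/K)\oplus\tilde{\mathfrak{k}}\oplus(\tilde{\mathfrak{n}}/K)\oplus((V/N)/K)$ are genuine objects of $\mathfrak{LP}$ carrying the reduced Lagrangians $L^{(N)}$ and $(L^{(N)})^{(K)}$; Theorem~\ref{ecs.lp} applied to each then yields the two pairwise equivalences. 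Thus the whole statement reduces to proving, say, (i)$\Leftrightarrow$(iii).

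For the cross-stage step I would route through the original unreduced problem and the direct $G$-reduction, exploiting the isomorphism $\beta^{TQ\oplus V}_{(A_N,A_{G/N},A_G)}$. Concretely, applying Theorem~\ref{reduction} to $L$ with the normal subgroup $N$ and connection $A_N$ shows that the variational problem for $L$ on $\ell\Omega(Q;q_0,q_1)\oplus\Omega(V;q_0,q_1)$ is equivalent to condition~(i); applying the same theorem to $L$ with the full group $G$ and connection $A_G$ shows that this unreduced problem is also equivalent to the direct $G$-reduced problem for $L^{(G)}$ on $T(Q/G)\oplus\tilde{\mathfrak{g}}\oplus(V/G)$. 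Finally, the proposition preceding this theorem gives that $\beta^{TQ\oplus V}_{(A_N,A_{G/N},A_G)}$ is an $\mathfrak{LP}$-isomorphism satisfying $L^{(G)}=(L^{(N)})^{(K)}\circ\beta^{TQ\oplus V}_{(A_N,A_{G/N},A_G)}$, so Proposition~\ref{isomorphism} identifies the $G$-reduced problem with condition~(iii). Chaining these identifications produces (i)$\Leftrightarrow$[unreduced $L$]$\Leftrightarrow$[$G$-reduced $L^{(G)}$]$\Leftrightarrow$(iii), and combining with the two pairwise equivalences closes the loop among all four statements.

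The step I expect to be delicate is the bookkeeping of curves and variations through $\beta^{TQ\oplus V}_{(A_N,A_{G/N},A_G)}$: one must verify that $\beta$ sends the directly $G$-reduced curve $\alpha^{TQ\oplus V}_{A_G}\circ\pi_G(\dot{q}\oplus v)$ to precisely the iteratively reduced curve $\dot{z}\oplus\bar{\kappa}\oplus[\bar{\eta}]_K\oplus[[v]_N]_K$ of~(iii), and likewise that it matches the admissible-variation spaces $\Delta^{\ell}$. The curve identity follows by unwinding the definition of $\beta$ as the composite of the three reduction identifications and the natural map $i^{TQ\oplus V}_{(G/N)}$, while the variation identity is exactly the content of Proposition~\ref{isomorphism} once the curve correspondence is fixed. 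The compatibility hypothesis on $A_G$, $A_N$, $A_{G/N}$ is what makes the two routes agree: it guarantees that the horizontal and vertical splittings used in the single-step $G$-reduction coincide with those obtained by composing the $N$- and $K$-splittings, which is the geometric fact underlying the factorization $L^{(G)}=(L^{(N)})^{(K)}\circ\beta$.
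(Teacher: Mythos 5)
Your proposal is essentially correct, but your route for the cross-stage link is a detour compared to what the paper (implicitly) does, and it smuggles in a hypothesis that the theorem does not state. The paper offers no separate proof of Theorem \ref{stages}: conditions (i)--(ii) are Theorem \ref{reduction} applied to $L$ with the group $N$ and connection $A_N$, and conditions (iii)--(iv) are Theorem \ref{reduction} applied \emph{again}, this time to the $\mathfrak{LP}$-bundle $T(Q/N)\oplus\tilde{\mathfrak{n}}\oplus(V/N)$ of Theorem \ref{quotientLP}, the $K$-invariant Lagrangian $L^{(N)}$, and the connection $A_{G/N}$. That second application already gives (i)$\Leftrightarrow$(iii) directly; no reference to $L^{(G)}$ is needed. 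Your chain (i)$\Leftrightarrow$[unreduced]$\Leftrightarrow$[$G$-reduced]$\Leftrightarrow$(iii) instead passes through $\beta^{TQ\oplus V}_{(A_N,A_{G/N},A_G)}$ and therefore requires a third connection $A_G$ compatible with $A_N$ and $A_{G/N}$ --- a datum absent from the statement of Theorem \ref{stages}. This is repairable (given $A_N$ and $A_{G/N}$ one can always define $A_G$ by declaring $v_q$ horizontal iff $A_N(v_q)=0$ and $A_{G/N}(T\pi_N(v_q))=0$), but you should either supply that construction or, better, drop the detour: the $\beta$-isomorphism and Proposition \ref{isomorphism} are what relate the \emph{staged} reduction to the \emph{direct} $G$-reduction, which is the surrounding discussion in the paper but not literally one of the four conditions. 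What your longer route buys is precisely that extra conclusion (equivalence with $\mathcal{LP}(L^{(G)})=0$), so it is worth keeping as a remark. One point both you and the paper gloss over: for the second application of Theorem \ref{reduction} one needs $K=G/N$ to act on $T(Q/N)\oplus\tilde{\mathfrak{n}}\oplus(V/N)$ by $\mathfrak{LP}$-isomorphisms preserving the reduced structures $[,]^{\tilde{\mathfrak{n}}}$, $\omega^{\tilde{\mathfrak{n}}}$, $\nabla^{\tilde{\mathfrak{n}}}$, which requires $A_N$ to be chosen $G$-invariant, not merely $N$-invariant; it would strengthen your write-up to state this explicitly.
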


\section{Noether current and vertical equations} \label{Noethersec}
In this section, we prove that the standard Noether current is not a constant of motion for Lagrangians defined on $\mathfrak{LP}$-bundles. Yet, the drift of this current reduces to the new vertical equation appearing in each step of the reduction.

\begin{definition}
Let $L:TQ\oplus V \to \mathbb{R}$ be a Lagrangian defined on an object of the $\mathfrak{LP}$ category on which a Lie group $G$ acts. We define the Noether current as the function $J:TQ\oplus V\to \mathfrak{g}^*$
\begin{equation}
J(\dot{q}\oplus v)(\eta)=\left\langle \frac{\partial L}{\partial \dot{q}}(\dot{q}\oplus v),\eta_q^Q\right\rangle ,
\end{equation}
for any $\dot{q}\oplus v \in TQ \oplus V$ and any $\eta \in \mathfrak{g}$.
\end{definition}

\begin{proposition} \label{drift}
Let $L:TQ\oplus V\to \mathbb{R}$ be a Lagrangian invariant under the action of a Lie group $G$ in the Lagrange--Poincar\'e category, and $\dot{q}(t)\oplus v(t)$ be a curve in $TQ\oplus V$ satisfying the Lagrange--Poincar\'e equations. Then the derivative of the Noether current  along the critical curve satisfies
\begin{equation}
\label{NoetherCurrentD}
\frac{d}{dt}J(\dot{q}(t)\oplus v(t))(\eta)=-\left\langle\frac{\partial L}{\partial v}(\dot{q}(t)\oplus v(t)),\omega(\dot{q}(t),\eta_{q(t)}^Q)+\eta_{v(t)}^V\right\rangle
\end{equation}
for all $\eta \in \mathfrak{g}$.
\end{proposition}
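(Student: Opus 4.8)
The plan is to differentiate $J(\dot q(t)\oplus v(t))(\eta)$ directly along the critical curve, turning the time derivative into covariant derivatives and then eliminating the purely $TQ$-terms by means of the horizontal Lagrange--Poincar\'e equation together with the infinitesimal invariance of $L$. Fix the auxiliary connection on $TQ$ used to build $\mathcal{LP}(L)$ and let $\frac{D}{dt}$ denote both its induced covariant derivative on $T^*Q$ and the covariant derivative of $TQ$-valued curves; since these are dual, the Leibniz rule for the pairing holds and gives
\[
\frac{d}{dt}\left\langle \frac{\partial L}{\partial \dot q}, \eta^Q_{q(t)} \right\rangle = \left\langle \frac{D}{dt}\frac{\partial L}{\partial \dot q}, \eta^Q_{q(t)} \right\rangle + \left\langle \frac{\partial L}{\partial \dot q}, \frac{D}{dt}\eta^Q_{q(t)} \right\rangle.
\]

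Into the first summand I would substitute the $T^*Q$-valued Lagrange--Poincar\'e equation, namely $\frac{D}{dt}\frac{\partial L}{\partial \dot q}=\frac{\partial L}{\partial q}-\langle \frac{\partial L}{\partial v},\omega_q(\dot q,\cdot)\rangle$, so that pairing with $\eta^Q_q$ produces the curvature-type term $-\langle \frac{\partial L}{\partial v},\omega_q(\dot q,\eta^Q_q)\rangle$ together with the horizontal term $\langle \frac{\partial L}{\partial q},\eta^Q_q\rangle$. The crucial step is then to rewrite the second summand $\langle \frac{\partial L}{\partial \dot q},\frac{D}{dt}\eta^Q_{q(t)}\rangle$ using invariance: differentiating $L\circ\rho_{\exp(s\eta)}=L$ at $s=0$ yields $dL(\eta^{TQ}_{\dot q}\oplus\eta^V_v)=0$, and I would decompose this identity through the connection-induced splitting $T(TQ\oplus V)=H\oplus\mathrm{Ver}_{TQ}\oplus\mathrm{Ver}_{V}$. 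The base projection of the generator is $\eta^Q_q$, the $\mathrm{Ver}_{TQ}$-component of the tangent (complete) lift $\eta^{TQ}$ is exactly $\nabla_{\dot q}\eta^Q=\frac{D}{dt}\eta^Q$, and the $\mathrm{Ver}_V$-component is $\eta^V_v$; hence invariance reads
\[
\left\langle \frac{\partial L}{\partial q},\eta^Q_q\right\rangle + \left\langle \frac{\partial L}{\partial \dot q},\frac{D}{dt}\eta^Q\right\rangle + \left\langle \frac{\partial L}{\partial v},\eta^V_v\right\rangle = 0,
\]
which lets me solve for $\langle \frac{\partial L}{\partial \dot q},\frac{D}{dt}\eta^Q\rangle$.

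Substituting this back, the two occurrences of $\langle \frac{\partial L}{\partial q},\eta^Q_q\rangle$ cancel and only the $\frac{\partial L}{\partial v}$-terms remain, producing precisely $-\langle \frac{\partial L}{\partial v},\omega_q(\dot q,\eta^Q_q)+\eta^V_v\rangle$, as claimed. As consistency checks I would note that the result is independent of the auxiliary connection (although $\frac{\partial L}{\partial q}$ and $\frac{D}{dt}\eta^Q$ individually are not), and that for $V=0$ the right-hand side vanishes, recovering the classical conservation of the Noether current of Section \ref{noetherTQ}. The main obstacle I anticipate is the geometric identification of the vertical part of the tangent lift $\eta^{TQ}$ with the covariant derivative $\frac{D}{dt}\eta^Q$, and of the vertical part of the bundle-action generator $\eta^V$ with the element $\eta^V_v\in V_{q(t)}$ appearing in the statement; these are what convert the abstract invariance $dL(\eta^{TQ\oplus V})=0$ into the usable pointwise identity above, after which the argument is routine bookkeeping.
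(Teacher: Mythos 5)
Your proposal is correct and follows essentially the same route as the paper's proof: both differentiate the pairing $\langle \partial L/\partial \dot q,\eta^Q_{q(t)}\rangle$ via the Leibniz rule, identify the vertical part of the action generator $\eta^{TQ}_{\dot q}$ with $\tfrac{D}{dt}\eta^Q_{q(t)}$, invoke the splitting of the infinitesimal invariance identity $\langle \partial L/\partial q,\eta^Q_q\rangle+\langle \partial L/\partial \dot q,\eta^{TQ}_{\dot q}\rangle+\langle \partial L/\partial v,\eta^V_v\rangle=0$, and then cancel against the $T^*Q$-valued Lagrange--Poincar\'e equation. The only (immaterial) difference is the order in which the invariance identity and the Lagrange--Poincar\'e equation are substituted.
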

\begin{proof}
Since $L$ is invariant, choosing $\exp(s\eta)\in G$, we have $L(\dot{q}\oplus v)=L(\exp(s\eta)\dot{q}\oplus \exp(s\eta)v)$ for all $s\in \mathbb{R}$. Differentiating, we obtain
\begin{align*}
0&=\left\langle \frac{\partial L}{\partial q},\mathrm{Hor}(\eta_{\dot{q}\oplus v}^{TQ\oplus V})\right\rangle+ \left\langle \frac{\partial L}{\partial  \dot{q}},\mathrm{Ver^1}(\eta_{\dot{q}\oplus v}^{TQ\oplus V})\right\rangle+ \left\langle \frac{\partial L}{\partial  v},\mathrm{Ver^2}(\eta_{\dot{q}\oplus v}^{TQ\oplus V})\right\rangle \\ &=
\left\langle \frac{\partial L}{\partial q},\eta_q^Q\right\rangle+ \left\langle \frac{\partial L}{\partial  \dot{q}},\eta_{\dot{q}}^{TQ}\right\rangle+ \left\langle \frac{\partial L}{\partial  v},\eta_{v}^{V}\right\rangle
\end{align*}
Then, the evolution of the Noether current along $\dot{q}(t)\oplus v(t)$ is
\begin{align*}
\frac{d}{dt}J(\dot{q}(t)\oplus v(t))(\eta)=&\frac{d}{dt}\left\langle \frac{\partial L}{\partial \dot{q}},\eta_{q(t)}^Q\right\rangle=\left\langle \frac{D}{dt}\left( \frac{\partial L}{\partial \dot{q}}\right) ,\eta_{q(t)}^Q\right\rangle+\left\langle \frac{\partial L}{\partial \dot{q}},\frac{D\eta_{q(t)}^Q}{dt}\right\rangle \\
=&\left\langle \frac{D}{dt}\left( \frac{\partial L}{\partial \dot{q}}\right) ,\eta_{q(t)}^Q\right\rangle+\left\langle \frac{\partial L}{\partial \dot{q}},\eta_{\dot{q}(t)}^{TQ}\right\rangle \\=&\left\langle \frac{D}{dt}\left( \frac{\partial L}{\partial \dot{q}}\right) ,\eta_{q(t)}^Q\right\rangle-\left\langle \frac{\partial L}{\partial q},\eta_{q(t)}^Q\right\rangle- \left\langle \frac{\partial L}{\partial  v},\eta_{v(t)}^{V}\right\rangle\\
=&-\left\langle\frac{\partial L}{\partial v}(\dot{q}(t)\oplus v(t)),\omega(\dot{q}(t),\eta_{q(t)}^Q)+\eta_{v(t)}^V\right\rangle,
\end{align*}
where it has been used that $\mathrm{Hor}(\mathcal{LP})(L)(\dot{q}(t)\oplus v(t))(\eta_{q(t)}^Q)=0$.
\end{proof}

\begin{remark}
The proof of Proposition \ref{drift} does not make use of the vertical equation $\mathrm{Ver}(\mathcal{LP})(L)(\dot{q}(t)\oplus v(t))=0$. That is, the evolution of the Noether current described aboved can be applied to any curve  $\dot{q}(t)\oplus v(t)$ satisfying only the horizontal equation, $\mathrm{Hor}(\mathcal{LP})(L)(\dot{q}(t)\oplus v(t))=0$.

\end{remark}

\begin{definition}
The Noether current defined by an invariant Lagrangian is $G$-equivariant. Hence, one can define the reduced Noether current
$$
j:T(Q/G)\oplus\tilde{\mathfrak{g}}\oplus V/G\to \tilde{\mathfrak{g}}^*
$$
as
$$
j(\dot{x},\bar{\xi},[v])=[q,J(\dot{q},v)]_G,
$$
where $(\dot{x},\bar{\xi},[v])$ is any element of $T(Q/G)\oplus \tilde{\mathfrak{g}}\oplus V/G$, and $(\dot{q},v)$ projects to $(\dot{x},\bar{\xi},[v])$  by the projection from $TQ \oplus V$ to $T(Q/G)\oplus \tilde{\mathfrak{g}}\oplus V/G$.
\end{definition}
The drift \eqref{NoetherCurrentD} of the Noether current $J$ in $TQ \oplus V$ projects to $T(Q/G)\oplus \tilde{\mathfrak{g}}\oplus (V/G)$ to the condition
\begin{equation}
\label{redNoetherCurrent}
\frac{d}{dt}j(\dot{x},\bar{\xi},[v])\bar{\eta}=-\left\langle \frac{\partial L^{(G)} l}{\partial [v]},[\omega]_G(\dot{x}\oplus\bar{\xi},\bar{\eta})+[\eta^V _{v(t)}]_G \right\rangle
\end{equation}
along a solution curve $(\dot{x},\bar{\xi},[v])$ and $\bar{\eta} = [q(t),\eta ]_G$, $\eta \in \mathfrak{g}$.
We want to relate this reduced drift to the vertical equation on $T(Q/G)\oplus \tilde{\mathfrak{g}}\oplus V/G$. The vertical equation for the reduced lagrangian $L^{(G)}$ is
\begin{equation*}
\left\langle \frac{D^{\tilde{\mathfrak{g}}}}{dt}\left( \frac{\partial L^{(G)}}{\partial \bar{\xi}}\oplus\frac{\partial L^{(G)}}{\partial [v]}\right);\bar{\eta},[u]\right\rangle  =\left\langle \mathrm{ad}^*_{\bar{\xi}\oplus[v]}\left( \frac{\partial L^{(G)}}{\partial \bar{\xi}}\oplus\frac{\partial L^{(G)}}{\partial [v]}\right);\bar{\eta},[u]\right\rangle,
\end{equation*}
where $(\bar{\eta},[u])\in \tilde{\mathfrak{g}}\oplus V/G$, can be rewritten using the explicit expressions of Theorem \ref{quotientLP}
\begin{align*}
\left\langle \frac{D}{dt}\left( \frac{\partial L^{(G)}}{\partial \bar{\xi}}\right),\bar{\eta} \right\rangle +
\left\langle \left[ \frac{D^{(A,H)}}{dt}\right]\left( \frac{\partial L^{(G)}}{\partial [v]}\right), [u] \right\rangle +\left\langle \frac{\partial L^{(G)}}{\partial [v]},[\omega]_G(\dot{x},\bar{\eta})\right\rangle
\\=\left\langle \frac{\partial L^{(G)}}{\partial \bar{\xi}},[\bar{\xi},\bar{\eta}]\right\rangle +\left\langle \frac{\partial L^{(G)}}{\partial [v]}, [\nabla^{(A,V)}]_{G,\bar{\xi}}[u]-[\nabla^{(A,V)}]_{G,\bar{\eta}}[v]-[\omega]_G(\bar{\xi},\bar{\eta})+\left[ [v],[u]\right] \right\rangle.
\end{align*}
Taking alternatively, $[u]=0$ and $\bar{\eta}=0$, the vertical Lagrange--Poincar\'e equation splits into two: A new vertical equation coming from the reduction step
\begin{equation}
\label{reducedvertical}
\left\langle \frac{D}{dt}\left( \frac{\partial L^{(G)}}{\partial \bar{\xi}}\right),\bar{\eta} \right\rangle =\left\langle \mathrm{ad^*_{\bar{\xi}}}\frac{\partial L^{(G)}}{\partial \bar{\xi}},\bar{\eta}\right\rangle
+\left\langle \frac{\partial L^{(G)}}{\partial [v]},-[\nabla^{(A,V)}]_{G,\bar{\eta}}[v]-[\omega]_G(\dot{x}\oplus\bar{\xi},\bar{\eta}) \right\rangle
\end{equation}
and an equation coming from the unreduced vertical equation in $TQ\oplus V$,
\begin{align*}
\left\langle \left[ \frac{D^{(A,H)}}{dt}\right]\left( \frac{\partial L^{(G)}}{\partial [v]}\right), [u] \right\rangle
=\left\langle \mathrm{ad^*_{[v]}}\frac{\partial L^{(G)}}{\partial [v]},[u]\right\rangle+ \left\langle \frac{\partial L^{(G)}}{\partial [v]}, [\nabla^{(A,V)}]_{G,\bar{\xi}}[u]) \right\rangle.
\end{align*}

\begin{proposition}
The evolution of the reduced current given in \eqref{redNoetherCurrent} is equivalent to the group \eqref{reducedvertical} of the vertical Lagrange--Poincar\'e equation in $T(Q/G)\oplus  \tilde{\mathfrak{g}}\oplus V/G$ defined by the reduction step.
\end{proposition}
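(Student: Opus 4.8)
The plan is to rewrite both \eqref{redNoetherCurrent} and \eqref{reducedvertical} as formulas for the single scalar quantity $\frac{d}{dt}\langle j(\dot{x},\bar{\xi},[v]),\bar{\eta}\rangle$ and verify that they coincide. First I would identify the reduced Noether current with the $\bar{\xi}$-derivative of the reduced Lagrangian, exactly as in Subsection \ref{noetherTQ}. Varying the velocity argument of $L$ along the vertical generator $\eta_q^Q$ and pushing the result through $\alpha^{TQ\oplus V}_A$ shows that $(\dot{q}+s\eta_q^Q)\oplus v$ maps to $\dot{x}\oplus(\bar{\xi}+s\bar{\eta})\oplus[v]_G$, since $T\pi(\eta_q^Q)=0$ and $A(\eta_q^Q)=\eta$. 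Hence
$$\langle j(\dot{x},\bar{\xi},[v]),\bar{\eta}\rangle=\left\langle \frac{\partial L^{(G)}}{\partial\bar{\xi}},\bar{\eta}\right\rangle,\qquad \bar{\eta}=[q,\eta]_G.$$

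Next I would differentiate this identity along the curve. Applying the Leibniz rule for the covariant derivative induced by $\nabla^A$ on the dual bundle $\tilde{\mathfrak{g}}^*$, together with $\frac{D\bar{\eta}}{dt}=-[\bar{\xi},\bar{\eta}]$ (computed as in Subsection \ref{noetherTQ}, because $\eta$ is constant), yields
$$\frac{d}{dt}\langle j,\bar{\eta}\rangle=\left\langle \frac{D}{dt}\frac{\partial L^{(G)}}{\partial\bar{\xi}},\bar{\eta}\right\rangle-\left\langle \mathrm{ad}^*_{\bar{\xi}}\frac{\partial L^{(G)}}{\partial\bar{\xi}},\bar{\eta}\right\rangle.$$
The right-hand side is precisely the left-hand side of \eqref{reducedvertical} minus its first $\mathrm{ad}^*$ term, so \eqref{reducedvertical} holds along the curve if and only if
$$\frac{d}{dt}\langle j,\bar{\eta}\rangle=\left\langle \frac{\partial L^{(G)}}{\partial[v]},-[\nabla^{(A,V)}]_{G,\bar{\eta}}[v]_G-[\omega]_G(\dot{x}\oplus\bar{\xi},\bar{\eta})\right\rangle.$$

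It then remains to match this against \eqref{redNoetherCurrent}; the only discrepancy is the term $-[\nabla^{(A,V)}]_{G,\bar{\eta}}[v]_G$ versus $-[\eta_v^V]_G$. The crux of the argument, and the step I expect to be the main obstacle, is therefore the geometric identity
$$[\nabla^{(A,V)}]_{G,\bar{\eta}}[v]_G=[\eta_v^V]_G.$$
To establish it I would use that $[\nabla^{(A,V)}]_{G,\bar{\eta}}[v]_G=[\nabla_{X_{\bar{\eta}}}v]_G$ with $(X_{\bar{\eta}})_q=\eta_q^Q$, and that a $G$-invariant representative section $v$ satisfies $v(g\cdot q)=g\cdot v(q)$; differentiating this relation in $g$ along $\eta$ gives $Tv(\eta_q^Q)=\eta_{v(q)}^V$. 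Splitting the latter with the connection $\nabla$ identifies the vertical component of $\eta_{v(q)}^V$ with $\nabla_{\eta_q^Q}v$, which is exactly the fiber element paired with $\partial L^{(G)}/\partial[v]$ in both \eqref{redNoetherCurrent} and \eqref{NoetherCurrentD} (where, in each case, the pairing retains only the $\nabla$-vertical part of the generator). Substituting this identity makes the two displayed right-hand sides agree, and since every step above is reversible, \eqref{redNoetherCurrent} and \eqref{reducedvertical} are equivalent along solution curves.
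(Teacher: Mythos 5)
Your proof is correct and follows essentially the same route as the paper: identify $j$ with $\partial L^{(G)}/\partial\bar{\xi}$, differentiate with the dual covariant derivative and $D\bar{\eta}/dt=-[\bar{\xi},\bar{\eta}]$, and reduce everything to the identity $[\nabla^{(A,V)}]_{G,\bar{\eta}}[v]_G=[\eta^V_v]_G$, which is exactly the lemma the paper isolates (stated there with $\bar{\xi}$ in place of $\bar{\eta}$). The only cosmetic difference is that you verify that identity by differentiating the equivariance of an invariant section and splitting $Tv(\eta^Q_q)$ with $\nabla$, whereas the paper decomposes a curve over $q(t)$ as $\exp((t-t_0)\xi)\,v^h_{q_0}(t)$ and splits the covariant derivative along it; the two computations are equivalent.
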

\begin{proof}
As in section \ref{noetherTQ}, given a curve $\dot{q}(t)\oplus v(t)$ in $TQ\oplus V$ and its reduced curve $\dot{x}(t)\oplus \bar{\xi}(t)\oplus [v](t)$ in $T(Q/G)\oplus \tilde{\mathfrak{g}}\oplus V/G$, we have
\begin{align*}
\left\langle \frac{d}{dt}J(\dot{q}(t)),\eta\right\rangle =\frac{d}{dt}\left\langle j(\dot{x}(t),\bar{\xi}(t)),\bar{\eta}(t)\right\rangle
=\left\langle \frac{D}{dt}\left(  \frac{\partial L^{(G)}}{\partial \bar{\xi}}\right), \bar{\eta}(t)\right\rangle-\left\langle \mathrm{ad}^*_{\bar{\xi}(t)}\frac{\partial L^{(G)}}{\partial \bar{\xi}}, \bar{\eta}(t)\right\rangle.
\end{align*}
 It is known that for any curve $[v](t)$ in $V/G$, there exists a curve $x(t)=\tau_G([v](t))$ in $Q/G$. For a fixed $t_0$, denote $x_0=x(t_0)$ and choose $q_0\in\pi^{-1}(x_0)$. There exist a unique curve $v^h_{q_0}(t)$ in $V$ such that $\tau(v^h_{q_0}(t))=x_{q_0}^h(t)$, the horizontal lift of $x(t)$ at $q_0$, and $\pi_{V,G}(v^h_{q_0}(t))=[v](t)$. According to the definition of quotient covariant derivative on $T(Q/G)\oplus \tilde{\mathfrak{g}}\oplus V/G$,
\begin{align*}
\left[\frac{D^{(A)}}{dt}\right]_{\bar{\xi}}[v](t)&=\left[ \left.\frac{D^{(A)}}{dt}\right|_{t=t_0}\exp((t-t_0)\xi)v^h_{q_0}(t)\right]\\&=\left[ \left.\frac{D^{(A)}}{dt}\right|_{t=t_0}\exp((t-t_0)\xi)v^h_{q_0}(t_0)\right] +\left[ \left.\frac{D^{(A)}}{dt}\right|_{t=t_0}v^h_{q_0}(t)\right]\\&=[\xi_{v}^V]+\left[\frac{D^{(A,H)}}{dt}\right][v](t)
\end{align*}
Thus $\left[\frac{D^{(A,V)}}{dt}\right]_{\bar{\xi}}[v](t)=[\xi_{v}^V]$ and the drift equation
\begin{align*}
\frac{d}{dt}J(\dot{q}(t)\oplus v(t))(\eta)=-\left\langle\frac{\partial L^{(G)}}{\partial v}(\dot{q}(t)\oplus v(t)),\omega(\dot{q}(t),\eta_{q(t)}^Q)+\eta_{v(t)}^V\right\rangle
\end{align*}
reduces to
\begin{align*}
\left\langle \frac{D}{dt}\left(  \frac{\partial L^{(G)}}{\partial \bar{\xi}}\right), \bar{\eta}(t)\right\rangle-\left\langle \mathrm{ad}^*_{\bar{\xi}(t)}\frac{\partial L^{(G)}}{\partial \bar{\xi}}, \bar{\eta}(t)\right\rangle=-\left\langle \frac{\partial L^{(G)}}{\partial [v]},[\omega]_G(\dot{x}\oplus\bar{\xi},\bar{\eta})+[\nabla^{(A,V)}]_{G,\bar{\eta}}[v]\right\rangle,
\end{align*}
which is the new vertical equation obtained in the reduction process.
\end{proof}

\section{The Poisson category $\mathfrak{LP^*}$}
The objects of the category $\mathfrak{LP^*}$ are  bundles $\bar{\tau}_Q\oplus \bar{\tau}: T^*Q\oplus V^*\to Q$, such that $V^*\to Q$ is the dual of a Lie algebra vector bundle $V\to Q$, equipped with a linear connection $\nabla$, and $Q$ has a 2-form $\omega$ taking values in $V$. An element $T^*Q\oplus V^*\to Q\in \mathfrak{LP^*}$ can be thought as the dual of a vector bundle $TQ\oplus V\to Q\in \mathfrak{LP}$, with the same structures $\omega$, $[\cdot , \cdot ]$ and $\nabla$ (we use the same notation for a linear connection and its dual).

Elements of  $\mathfrak{LP^*}$  are Poisson manifolds as we now describe.


\begin{proposition} \label{propPoisson}
\cite{CMR}
If $\bar{\tau}_Q\oplus \bar{\tau}: T^*Q\oplus V^*\to Q$ is dual to an element $TQ\oplus V\to Q  \in \mathfrak{LP}$, then there is a unique Poisson bracket
\[
\{,\}:C^{\infty}(T^*Q\oplus V^*)\times C^{\infty}(T^*Q\oplus V^*) \to C^{\infty}(T^*Q\oplus V^*)
\]
characterized by its restriction to affine functions as
\begin{enumerate}[\indent {}]
\item $\{\bar{f},\bar{g}\}=0$ for all $f,g\in C^{\infty}(Q)$
\item $\{\bar{f},P(X\oplus w)\}=\overline{X[f]}$
\item $\{P(X_1\oplus w_1),P(X_2\oplus w_2)\}=-P([X_1\oplus w_1,X_2\oplus w_2])$
\end{enumerate}
where
\begin{itemize}
\item for $f$ in $C^{\infty}(Q)$, we define $\bar{f}=f\circ \bar{\tau}_Q\oplus \bar{\tau}$
\item for $X\oplus w \in \Gamma (TQ\oplus V)$ we define $$P(X\oplus w)(p\oplus \nu)= \langle p, X\rangle+\langle \nu, w \rangle.$$ for all $p\oplus \nu \in T^*Q\oplus V^*$
\end{itemize}
\end{proposition}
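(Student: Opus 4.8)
The plan is to recognise $TQ\oplus V$ as a Lie algebroid over $Q$, with anchor the projection $X\oplus w\mapsto X$ onto the tangent factor and bracket on sections given by the $\mathfrak{LP}$ bracket \eqref{corchetesecciones}, and then to equip the dual bundle $T^*Q\oplus V^*$ with the linear Poisson structure canonically associated to any Lie algebroid. First I would record the Leibniz property
$$[X_1\oplus w_1,\,f(X_2\oplus w_2)]=f\,[X_1\oplus w_1,X_2\oplus w_2]+X_1[f]\,(X_2\oplus w_2),\qquad f\in\mathcal{C}^\infty(Q),$$
which follows from the derivation property of $\nabla$ in its first slot together with the $\mathcal{C}^\infty(Q)$-bilinearity of $\omega$ and of the fibrewise bracket on $V$; combined with the Jacobi identity guaranteed by Proposition \ref{3cond}, this makes $TQ\oplus V$ a genuine Lie algebroid.

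For uniqueness I would use that a Poisson bracket is a biderivation, so that $\{F,G\}(\zeta)$ depends only on the differentials $dF_\zeta$ and $dG_\zeta$ at each $\zeta\in T^*Q\oplus V^*$. The affine functions $\bar f$, $f\in\mathcal{C}^\infty(Q)$, and $P(X\oplus w)$, $X\oplus w\in\Gamma(TQ\oplus V)$, have differentials spanning $T^*_\zeta(T^*Q\oplus V^*)$ at every $\zeta$; hence any bracket is completely determined by its values on them, and the three listed conditions prescribe exactly those values. This shows that at most one such bracket exists.

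For existence, the first genuine check is that the three conditions are consistent with the module relation $\bar f\,P(X\oplus w)=P(f(X\oplus w))$ under the Leibniz rule: computing $\{P(Z_1),P(fZ_2)\}$ both directly through condition 3 and through the derivation property yields the same answer precisely by the Leibniz identity for the algebroid bracket displayed above. One then extends the three conditions to a skew-symmetric biderivation on all of $C^\infty(T^*Q\oplus V^*)$, equivalently by writing down the associated bivector in a local frame in terms of the structure functions of $\nabla$, $\omega$ and $[\cdot,\cdot]$, skew-symmetry being immediate.

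The main obstacle is the Jacobi identity. As the Jacobiator is a triderivation it suffices to test it on triples of affine functions. Triples containing two functions $\bar f,\bar g$ vanish by condition 1 together with the fact that $\{\bar f,P(Z)\}=\overline{X[f]}$ is again basic; the remaining mixed triples reduce to the anchor being a bracket homomorphism, which holds because the tangent component of \eqref{corchetesecciones} is simply $[X_1,X_2]$. The decisive case is three linear functions $P(Z_1),P(Z_2),P(Z_3)$: applying condition 3 twice converts the cyclic sum into $P\bigl([Z_1,[Z_2,Z_3]]+[Z_2,[Z_3,Z_1]]+[Z_3,[Z_1,Z_2]]\bigr)$, which vanishes exactly because the $\mathfrak{LP}$ bracket obeys the Jacobi identity (Proposition \ref{3cond}). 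Thus the Poisson Jacobi identity is equivalent to, and is supplied by, the Lie algebroid axioms already in hand, completing the construction.
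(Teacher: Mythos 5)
Your proof is correct, and it takes a more conceptual route than the paper. The paper states Proposition \ref{propPoisson} as imported from \cite{CMR} and, in the unnumbered proposition that follows it, effectively supplies the existence half by writing the bracket explicitly in local coordinates, verifying the three characterizing conditions on affine functions, and then checking the Jacobi identity only on affine functions by the same three-case analysis you use (two basic functions, one basic and two linear, three linear reducing to the Jacobi identity of \eqref{corchetesecciones}); uniqueness is dispatched there by the same observation you make, namely that a biderivation is determined pointwise by differentials and that the differentials of $\bar{f}$ and $P(X\oplus w)$ span every cotangent space of $T^*Q\oplus V^*$. What you do differently is to package $TQ\oplus V$ as a Lie algebroid --- anchor $X\oplus w\mapsto X$, bracket \eqref{corchetesecciones} --- and appeal to the canonical linear Poisson structure on the dual of a Lie algebroid. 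The two ingredients you supply that the paper does not make explicit, both correct, are (i) the Leibniz identity $[Z_1,fZ_2]=f[Z_1,Z_2]+X_1[f]\,Z_2$, which indeed follows from the derivation property of $\nabla$ and the $\mathcal{C}^{\infty}(Q)$-bilinearity of $\omega$ and of the fibrewise bracket, and (ii) the consistency check that condition 3 is compatible with the module relation $P(fZ)=\bar{f}\,P(Z)$, which is exactly what guarantees that the three conditions extend to a well-defined biderivation without first exhibiting the bivector in coordinates. Your approach buys the recognition that the whole statement is an instance of the standard duality between Lie algebroids and linear Poisson structures (so that Proposition \ref{3cond} is precisely the algebroid Jacobi axiom); the paper's approach buys the explicit coordinate formula for the bracket, which it needs anyway for the Hamilton--Poincar\'e equations.
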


This Poisson bracket behaves well under reduction by stages, in particular we have:

\begin{proposition}
\cite{CMR}
A $\mathfrak{LP}$ action of a Lie group on an element $TQ\oplus V\in \mathfrak{LP}$ naturally induces an action on  $T^*Q\oplus V^*\in \mathfrak{LP^*}$ such that the projection $T^*Q\oplus V^* \to T^*(Q/G)\oplus \tilde{\mathfrak{g}}^*\oplus V^*$ is a Poisson map with respect to the Poisson brackets defined in Proposition \ref{propPoisson}.
\end{proposition}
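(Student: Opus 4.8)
The plan is to prove the statement by checking Poisson compatibility on generators, exploiting that both Poisson brackets are completely determined by their values on affine functions through Proposition \ref{propPoisson}. First I would make the induced action explicit: since each $\rho_g:TQ\oplus V\to TQ\oplus V$ is an $\mathfrak{LP}$-isomorphism, the natural action on the dual is the inverse transpose $\hat{\rho}_g:T^*Q\oplus V^*\to T^*Q\oplus V^*$, determined by $\langle\hat{\rho}_g\mu,e\rangle=\langle\mu,\rho_{g^{-1}}e\rangle$. It is an $\mathfrak{LP^*}$-isomorphism because it preserves $\omega$, $[\cdot,\cdot]$ and $\nabla$, which are $G$-invariant by hypothesis. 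On functions this reads $\hat{\rho}_g^*\bar f=\overline{f\circ\rho_{g,0}}$ on base pullbacks (with $\rho_{g,0}$ the action on $Q$) and $\hat{\rho}_g^*P(s)=P(\rho_{g^{-1}}\cdot s)$ on fiberwise-linear functions, where $\rho_{g^{-1}}\cdot s$ denotes the pushforward of the section $s\in\Gamma(TQ\oplus V)$.

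Next I would verify that this action is by Poisson diffeomorphisms. Because the three defining relations of Proposition \ref{propPoisson} involve only the derivation $X[f]$ and the Lie bracket on $\Gamma(TQ\oplus V)$, and both intertwine with the action (the bracket being invariant since each $\rho_g$ is an $\mathfrak{LP}$-isomorphism), a direct check on the generators yields $\hat{\rho}_g^*\{F,G\}=\{\hat{\rho}_g^*F,\hat{\rho}_g^*G\}$; for instance $\hat{\rho}_g^*\{P(s_1),P(s_2)\}=-P(\rho_{g^{-1}}\cdot[s_1,s_2])$ equals $-P([\rho_{g^{-1}}\cdot s_1,\rho_{g^{-1}}\cdot s_2])$ precisely by invariance of the $\mathfrak{LP}$-bracket. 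Consequently the $G$-invariant functions form a Poisson subalgebra, and the quotient $(T^*Q\oplus V^*)/G$ inherits a unique Poisson bracket for which $\pi_G$ is a Poisson map.

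It then remains to identify this quotient bracket, under the projection $p=(\alpha^{TQ\oplus V}_A)^*\circ\pi_G$ landing in the dual of the reduced bundle $T(Q/G)\oplus\tilde{\mathfrak{g}}\oplus(V/G)$ of Theorem \ref{quotientLP}, with the bracket that Proposition \ref{propPoisson} assigns to that reduced $\mathfrak{LP}$-bundle. I would test on its affine functions, using the correspondence $\Gamma(T(Q/G)\oplus\tilde{\mathfrak{g}}\oplus(V/G))\cong\Gamma^G(TQ\oplus V)$ furnished by $\alpha^{TQ\oplus V}_A$, which gives $\bar h\circ p=\overline{h\circ\pi}$ and $P(Z)\circ p=P(\bar Z)$ for the invariant section $\bar Z$ corresponding to $Z$. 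Then $\{\overline{h_1},\overline{h_2}\}$ matches trivially (both sides vanish); $\{\bar h,P(Z)\}$ matches because the base field of $\bar Z$ is $\pi$-related to that of $Z$, so $\bar Z_Q[h\circ\pi]=(Z_{Q/G}[h])\circ\pi$; and $\{P(Z_1),P(Z_2)\}$ matches because $\alpha^{TQ\oplus V}_A$ is a Lie algebra homomorphism on sections, whence $[\bar Z_1,\bar Z_2]$ corresponds to the reduced bracket $[Z_1,Z_2]$.

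The main obstacle I anticipate is bookkeeping rather than conceptual: one must verify the identity $P(Z)\circ p=P(\bar Z)$, that is, that the cotangent-side projection $p$ is exactly dual to the section correspondence on the tangent side, which requires carefully matching the vector-bundle quotient $\pi_{V,G}$ and the pairings under $\alpha^{TQ\oplus V}_A$. Once this duality is pinned down, the heart of the argument is the already-established fact (recorded after Theorem \ref{variations}) that $\alpha^{TQ\oplus V}_A$ intertwines the quotient Lie bracket of $\Gamma((TQ\oplus V)/G)$ with the reduced $\mathfrak{LP}$-bracket, which makes the only nontrivial bracket relation fall into place.
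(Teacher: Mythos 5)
Your argument is correct. Note that the paper itself offers no proof of this proposition: it is stated with a citation to \cite{CMR}, so there is no internal proof to compare against. That said, your route is the natural one and matches the machinery the paper does set up. The two ingredients you lean on are both available in the text: the bracket of Proposition \ref{propPoisson} is \emph{characterized} by its values on the affine functions $\bar f$ and $P(X\oplus w)$, whose differentials span the cotangent spaces, so a biderivation identity need only be checked on them; and the statement (recorded in the subsection on reduction of variations) that $\alpha^{TQ\oplus V}_A$ on sections is a Lie algebra homomorphism from $\Gamma((TQ\oplus V)/G)$ with the quotient bracket onto $\Gamma(T(Q/G)\oplus\tilde{\mathfrak{g}}\oplus(V/G))$ with the reduced structures of Theorem \ref{quotientLP} is exactly what makes the relation $\{P(Z_1),P(Z_2)\}=-P([Z_1,Z_2])$ survive the projection. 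The bookkeeping point you flag, namely $P(Z)\circ p=P(\bar Z)$, does hold: the fiberwise identifications defining the quotient vector bundle and $\alpha^{TQ\oplus V}_A$ dualize so that pairing a covector with the reduced section equals pairing its representative with the invariant section $\bar Z$, and the $TQ$-component of $\bar Z$ is $\pi$-related to that of $Z$, which settles the mixed relation $\{\bar h,P(Z)\}=\overline{X[h]}$. One small caveat: the target of the projection in the statement should be read as $T^*(Q/G)\oplus\tilde{\mathfrak{g}}^*\oplus(V/G)^*$ (or $(V^*)/G$), as you implicitly do; the $V^*$ in the displayed statement is a typo, and your identification of the quotient with the dual of the reduced $\mathfrak{LP}$-bundle is the correct reading.
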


The explicit expression of the Poisson bracket of Proposition \ref{propPoisson} was given for Lagrange--Poincar\'e bundles of the type $T(Q/G)\oplus \tilde{\mathfrak{g}}$ in \cite{moscow}. The generalization to the whole $\mathfrak{LP}$ category is given hereunder:

\begin{proposition}
If $\bar{\tau}_Q\oplus \bar{\tau}: T^*Q\oplus V^*\to Q$ is an element of the $\mathfrak{LP^*}$ category dual to an element $TQ\oplus V\to Q  \in \mathfrak{LP}$, then the Poisson bracket characterized in Proposition \ref{propPoisson}
is the following
\begin{equation*}
\label{poisson_bracket}
\{f,g\}=\frac{\partial f}{\partial q}\frac{\partial g}{\partial p}-\frac{\partial g}{\partial q}\frac{\partial f}{\partial p} +\left\langle \nu,\omega\left(\frac{\partial f}{\partial p},\frac{\partial g}{\partial p}\right)\right\rangle+\left\langle \nu,\left[ \frac{\partial g}{\partial \nu},\frac{\partial f}{\partial \nu}\right] \right\rangle .
\end{equation*}
\end{proposition}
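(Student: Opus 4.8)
The plan is to exploit the uniqueness asserted in Proposition \ref{propPoisson}: the abstract bracket is the \emph{only} Poisson bracket whose restriction to affine functions obeys (i)--(iii). Hence it suffices to check that the proposed closed formula, call it $\{\,,\,\}_0$, (a) is a biderivation, so that it is governed by a bivector field and its value on any pair of functions depends only on their differentials, and (b) reproduces (i)--(iii) on the affine functions $\bar{f}$ and $P(X\oplus w)$. Since the differentials of these affine functions span $T^*_{p\oplus\nu}(T^*Q\oplus V^*)$ at every point — the $d\bar{f}$ exhaust the horizontal covectors while the $dP(X\oplus w)$, as $X\oplus w$ ranges over $(TQ\oplus V)_q$, exhaust the fibrewise (vertical) covectors — agreement on them forces the two bivectors to coincide. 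In particular $\{\,,\,\}_0$ is then Poisson and independent of the auxiliary linear connection on $TQ$ used to make sense of $\partial/\partial q$.

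First I would record the elementary structural facts. The right-hand side is visibly $\mathbb{R}$-bilinear and a first-order derivation in each slot (each summand is linear in one derivative $\partial/\partial q$, $\partial/\partial p$ or $\partial/\partial\nu$ of its argument, and these operators obey Leibniz); thus it defines a bivector field. Skew-symmetry follows termwise: the first two terms interchange under $f\leftrightarrow g$, while the $\omega$-term and the bracket-term change sign because $\omega$ is a $2$-form and the fibre bracket $[\,,\,]$ is skew (the reversed order $[\partial g/\partial\nu,\partial f/\partial\nu]$ supplies exactly the needed sign).

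Next I would fix the meaning of the operators. Choosing any linear connection on $TQ$ and combining it with $\nabla$ on $V$ splits $T(T^*Q\oplus V^*)$; then $\partial/\partial p$ and $\partial/\partial\nu$ are the fibre derivatives (valued in $TQ$ and in $V\cong V^{**}$), while $\partial/\partial q$ is the horizontal derivative. On a fibre-linear function $P(X\oplus w)(p\oplus\nu)=\langle p,X\rangle+\langle\nu,w\rangle$ one has $\partial P/\partial p=X$, $\partial P/\partial\nu=w$, and, since moving horizontally parallel-transports $p$ and $\nu$, $(\partial P/\partial q)(Y)=\langle p,\nabla_Y X\rangle+\langle\nu,\nabla_Y w\rangle$. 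With these identifications (i) and (ii) are immediate: for $\bar f,\bar g$ every term vanishes because $\partial\bar f/\partial p=\partial\bar f/\partial\nu=0$; and for $\{\bar f,P(X\oplus w)\}_0$ only the first term survives, giving $df(X)=\overline{X[f]}$.

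The crux is condition (iii). Taking a \emph{torsion-free} connection on $TQ$ (permissible, since the outcome is connection-independent), I would evaluate the formula on $f=P(X_1\oplus w_1)$, $g=P(X_2\oplus w_2)$ term by term. The first two terms give $\langle p,\nabla_{X_2}X_1-\nabla_{X_1}X_2\rangle+\langle\nu,\nabla_{X_2}w_1-\nabla_{X_1}w_2\rangle$, and torsion-freeness turns the $TQ$-part into $-\langle p,[X_1,X_2]\rangle$; the $\omega$-term contributes $\langle\nu,\omega(X_1,X_2)\rangle$ and the bracket-term contributes $\langle\nu,[w_2,w_1]\rangle=-\langle\nu,[w_1,w_2]\rangle$. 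Collecting everything and comparing with the $\mathfrak{LP}$ bracket \eqref{corchetesecciones} shows the sum equals $-\langle p,[X_1,X_2]\rangle-\langle\nu,\nabla_{X_1}w_2-\nabla_{X_2}w_1-\omega(X_1,X_2)+[w_1,w_2]\rangle=-P([X_1\oplus w_1,X_2\oplus w_2])$, which is exactly (iii). The only real subtleties to watch are the covariant interpretation of $\partial/\partial q$ and the sign bookkeeping in the pairings; everything else is routine, and the uniqueness of Proposition \ref{propPoisson} then finishes the proof.
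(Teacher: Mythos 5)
Your proposal is correct, and it reaches the same essential computation as the paper (verifying properties (i)--(iii) of Proposition \ref{propPoisson} on the affine functions $\bar f$ and $P(X\oplus w)$, with the first two terms of the formula producing $-\langle p,[X_1,X_2]\rangle + \langle\nu,\nabla_{X_2}w_1-\nabla_{X_1}w_2\rangle$ and the $\omega$- and bracket-terms supplying the rest of $-P([X_1\oplus w_1,X_2\oplus w_2])$). The two arguments diverge in how they close the loop. The paper treats the verification as a self-contained existence-plus-identification proof: it checks skew-symmetry and Leibniz, argues that the Jacobi identity need only be tested on affine functions because the bracket depends only on differentials, and then runs through the four cases $\{\bar f,\{\bar g,\bar h\}\}$, $\{\bar f,\{\bar g,P\}\}$, $\{\bar f,\{P,P\}\}$, $\{P,\{P,P\}\}$ explicitly, the last resting on the Jacobi identity for the $\mathfrak{LP}$ bracket \eqref{corchetesecciones}; the computation of $\partial P(X\oplus w)/\partial q$ is done in local coordinates with the Christoffel symbols of $\nabla$. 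You instead invoke the existence and uniqueness already asserted in Proposition \ref{propPoisson}: since your formula is a biderivation (hence a bivector) and the differentials of the functions $\bar f$ and $P(X\oplus w)$ span the cotangent space pointwise, agreement on those functions forces your bivector to coincide with the one underlying the abstract Poisson bracket, so the Jacobi identity comes for free and no case analysis is needed; you also replace the coordinate computation by an intrinsic one using an auxiliary torsion-free connection on $TQ$, with connection-independence following a posteriori from the identification. Your route is logically leaner and makes transparent why only the values on affine functions matter; the paper's route buys a verification of the Jacobi identity that does not presuppose the existence statement of Proposition \ref{propPoisson} and exhibits concretely where condition 1.(d) of the $\mathfrak{LP}$ axioms enters. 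The only point to state carefully in your write-up is the spanning claim: the differentials $dP(X\oplus w)$ are not purely vertical, but their vertical components exhaust the fibre directions and, together with the $d\bar f$, they do span $T^*_{p\oplus\nu}(T^*Q\oplus V^*)$, which is all the argument requires.
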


\begin{proof}
It is necessary to prove that this expression defines a Poisson bracket on $T^*Q\oplus V^*$ and that it has the required properties for affine functions. Skew-symmetry and alternativity of $\{,\}$ are a consequence of the skew-symmetry and alternativity of $\omega$ and the Lie bracket on $V$. Proving the Jacobi identity for general functions $f,g\in C^{\infty}(T^*Q\oplus V^*)$ directly from the formula can be excruciating, yet, since $\{,\}$ does only depend on the differential of $f,g$ it suffices to prove it for affine funtions.

We first prove that $\{,\}$ has the required properties for affine functions in order to coincide with the bracket above. Since for any $\bar{f}$, with $f\in C^{\infty}(Q)$, $\frac{\partial \bar{f}}{\partial p}=\frac{\partial \bar{f}}{\partial \nu}=0$ then $\{\bar{f},\bar{g}\}=0$ for all $f,g\in C^{\infty}(Q)$. For any $P(X\oplus w)$, $\frac{\partial P(X\oplus w)}{\partial p}=X$ and $\frac{\partial P(X\oplus w)}{\partial \nu}=w$. Thus,
$$\{\bar{f},P(X\oplus w)\}=\frac{\partial f}{\partial q}\frac{\partial P(X\oplus w)}{\partial p}=X[\bar{f}]=\overline{X[f]}.$$ Finally, to obtain $\{P(X_1\oplus w_1),P(X_2\oplus w_2)\}$ is necessary to calculate $\frac{\partial P(X\oplus w)}{\partial q}$. This can be done using local coordinates in $T^*Q\oplus V^*$, $P(X\oplus w)=X^i(q^1\dots q^n)p_i+w^{\alpha}(q^1\dots q^n)\nu_{\alpha}$ where $(q^i,p_i), i=1\dots n=\mathrm{dim}Q$ are coordinates in $T^*Q$ and $\nu_{\alpha}, \alpha=1\dots m$ are independent local sections on $V$. Then, $$\frac{\partial P(X\oplus w)}{\partial q^j}=\frac{\partial X^i}{\partial q^j}p_i+\left( \frac{\partial w^{\alpha}}{\partial q^j}+\Gamma^{\alpha}_{\beta j}w^{\beta}\right) \nu_{\alpha}.$$ From where it follows that $$\left\langle \frac{\partial P(X_1\oplus w_1)}{\partial q},X_2\right\rangle = P(X_1\circ X_2)+ P(\nabla_{X_2}w_1).$$ Hence,
\begin{align*}
&\{P(X_1\oplus w_1),P(X_2\oplus w_2)\}\\&=\left\langle \frac{\partial P(X_1\oplus w_1)}{\partial q},X_2\right\rangle-\left\langle \frac{\partial P(X_2\oplus w_2)}{\partial q},X_1\right\rangle+\left\langle \nu, \omega(X_1,X_2)-[w_1,w_2]\right\rangle\\&=-P([X_1,X_2])(p,\nu)+P(\nabla_{X_2}w_1-\nabla_{X_1}w_2)(p,\nu)+P(\omega(X_1,X_2)-[w_1,w_2])(p,\nu)\\&=-P([X_1\oplus w_1,X_2\oplus w_2])
\end{align*}

These properties imply the Jacobi identity for affine functions, and hence, for all. In fact
$$\{\bar{f},\{\bar{g},\bar{h}\}\}+\{\bar{g},\{\bar{h},\bar{f}\}\}+\{\bar{h},\{\bar{f},\bar{g}\}\}=0+0+0=0;$$
\begin{align*}
\{\bar{f},\{\bar{g},P(X\oplus w)\}\}+\{\bar{g},\{P(X\oplus w),\bar{f}\}\}+\{P(X\oplus w),\{\bar{f},\bar{g}\}\}\\=\{\bar{f},\overline{X[g]}\}+\{\bar{g},-\overline{X[f]}\}+0=0;
\end{align*}
\begin{align*}
&\{\bar{f},\{P(X_1\oplus w_1),P(X_2\oplus w_2)\}\}\\&+\{P(X_1\oplus w_1),\{P(X_2\oplus w_2),\bar{f}\}\}+\{P(X_2\oplus w_2),\{\bar{f},P(X_1\oplus w_1)\}\}\\
=&\{\bar{f},-P([X_1\oplus w_1,X_2\oplus w_2])\}+\{P(X_1\oplus w_1),-\overline{X_2[f]}\}+ \{P(X_2\oplus w_2),\overline{X_1[f]}\}\\=&-\overline{[X_1,X_2][f]}+\overline{X_1[X_2[f]]}-\overline{X_2[X_1[f]]}=0;
\end{align*}
For three functions $P(X_i\oplus w_i)$, the Jacobi identity is based on the Jacobi identity for the Lie bracket on $\Gamma(TQ\oplus V)$
\begin{align*}
\{P(X_1\oplus w_1),\{P(X_2\oplus w_2),P(X_3\oplus w_3)\}\}\\=-\{P(X_1\oplus w_1),P([X_2\oplus w_2,X_3\oplus w_3])\}=P([X_1\oplus w_1,[X_2\oplus w_2,X_3\oplus w_3]])
\end{align*}
At last, the Leibniz identity is easily obtained from the expression.
\end{proof}

From this last result we can give an intrinsic definition of the $\mathfrak {LP}^*$ without an explicit notion of duality with respect to the Lagrange--Poincar\'e category. More precisely:

\begin{definition}
The objects of $\mathfrak{LP}^*$ are bundles $T^*Q\oplus V^*\to Q$, such that $V^*\to Q$ is the dual of a Lie algebra vector bundle $V\to Q$, equipped with a linear connection $\nabla$, $Q$ has a 2-form $\omega$ taking values in $V$, and the bracket given in \eqref{poisson_bracket} is a Poisson bracket.
\end{definition}

From this definition, it is not difficult to see that $T^*Q\oplus V^*\to Q\in \mathfrak{LP}^*$ if and only if $TQ\oplus V \in \mathfrak{LP}$ with the same structures.

We finally study the dynamical equations defined by the Poisson bracket in the dual Lagrange--Poincar\'e  category. Let $H:T^*Q\oplus V^*\to \mathbb{R}$ be a Hamilltonian on an element of $\mathfrak{LP}^*$. A simple computation shows that the  Hamiltonian field defined by the bracket \eqref{poisson_bracket} is given by
$$X_H=\left( \frac{\partial H}{\partial p},-\frac{\partial H}{\partial q}+\left\langle \nu, \omega\left( \cdot,\frac{\partial H}{\partial p}\right) \right\rangle, \mathrm{ad}^*_{\frac{\partial H}{\partial \nu}}\nu \right) $$
A curve $(p(t),\nu(t))$ in $T^*Q\oplus V^*$, projecting to a curve $q(t)$, is an integral of $X_H$ if and only if
\begin{eqnarray*}
\dot{q}&=&\frac{\partial H}{\partial p}\\
\frac{Dp}{dt}&=&-\frac{\partial H}{\partial q}+\left\langle \nu, \omega\left( \cdot,\frac{\partial H}{\partial p}\right) \right\rangle\\
\frac{\nabla \nu}{dt}&=&\mathrm{ad}^*_{\frac{\partial H}{\partial \nu}}\nu.
\end{eqnarray*}
These equations are called the Hamilton-Poincar\'e equations for $H$.

Given a Lagrangian $L:TQ\oplus V\to \mathbb{R}$ in the Lagrange--Poincar\'e category, we define the Legendre map
$\mathbb{F}L:TQ\oplus V \to T^*Q\oplus V^*$  in the usual way as the fiber derivative of $L$. We write this as
\[
(q,\dot{q},v)\mapsto (q,p=\partial L/\partial \dot{q}, \nu = \partial L /\partial v).
\]
If $\mathbb{F}L$ is a diffeomorphism, we define the Hamiltonian $H:T^*Q\oplus V^*\to \mathbb{R}$ as
\begin{align*}
H:T^*Q\oplus V^*\to &\mathbb{R} \\
(p,\nu) \mapsto & \langle p,\dot{q}\rangle + \langle \nu,v\rangle- L(\dot{q}\oplus v),
\end{align*}
where $(\dot{q},v)=\mathbb{F}L^{-1}(p,\nu)$. Then  Lagrange--Poincar\'e equations and Hamilton-Poincar\'e equations are equivalent.
Reduction using momentum techniques as in \cite{RedstagesPoisson}, that is, tracking the symplectic leaves structure,  should be a subject of future work. Specially, under the light of results in Section \ref{Noethersec}.

\section{Examples}
\subsection{Examples outside of $\mathfrak{RI}$}

Let $Z$ be an abelian Lie group, $\mathfrak{z}$ be its abelian Lie algebra, and $P\to M$ be a $Z$-principal bundle. Since the adjoint action is trivial, the adjoint bundle $\mathfrak{\tilde{z}}\to M$ is a trivial Lie algebra vector bundle. Let $V \to M$ be a non-trivial vector bundle equipped with a trivial fiberwise Lie bracket. Then the bundle $TM\oplus V$ cannot belong to the subcategory $\mathfrak{RI}$ of reduced tangent bundles. However, it can be seen as an object in $\mathfrak{LP}$, considering the adequate triple $[,]$, $\omega$ and $\nabla$. Firstly, we choose a flat connection $\nabla$ on $V$. There are instances of non-trivial vector bundles with flat connections, all of them over non-simply connected manifolds. As $\nabla$ is flat and the Lie bracket on the fibers is trivial, conditions $1.(e')$ and $1.(f')$ are clearly satisfied. With respect to  $1.(d')$, we take any closed 2-form $\omega$ with respect to the covariant derivative defined by $\nabla$ (that is, a representative of the cohomology with values in $V$ defined by the covariant differential). In particular, we can even choose $\omega =0$. In short, the problems posed in $TM\oplus V$ with trivial $[,]$, flat connection $\nabla$ and a form $\omega$ chosen as above are set in $\mathfrak{LP}$ and not in the subcategory $\mathfrak{RI}$.

We specify an example within this context. Let $L: TM\oplus V \to \mathbb{R}$ be the Lagrangian given by
\[
L(q,\dot{q},v)=g(\dot{q}, \dot{q}) + h(v,v),
\]
where $g$ is a Riemannian metric on $M$ and $h$ is a vector bundle metric on $V$. The Lagrange--Poincar\'e equations are
\begin{eqnarray*}
\frac{\nabla \dot{q}}{dt}&=& h(v, \omega (\dot{q},\cdot)),\\
\frac{\nabla v}{dt}&=&0.
\end{eqnarray*}
The first equation provides the Newtonian dynamics of a particle on $M$ under a force defined by $v$ and $\omega$. This equation reduces to the simple geodesic equation when $\omega =0$. The second equation is just the parallel transport of $v$ along $q(t)$.

A similar construction can be given in a wider class of fiber bundles. Let $G$ be any non-abelian Lie group, $\mathfrak{g}$ be its Lie algebra, and $\mathfrak{z}$ be the center of $\mathfrak{g}$. If $P\to M$ is a $G$-principal bundle, then the adjoint bundle $\mathfrak{\tilde{g}}\to M$ has a trivial subbundle with fiber dimension equal to the dimension of the center. Indeed, the subbundle
\[
\mathfrak{\tilde{z}}=\{ [p,B]_G: p\in P, B\in \mathfrak{z}\}
\]
can be identified with $M\times \mathfrak{z}$ as the adjoint action in the center is trivial. In addition, at each point, the subbundle is the center of the Lie algebra on the fibers of $\mathfrak{\tilde{g}}$. Furthermore, suppose that $\mathfrak{g}$ is a reductive Lie algebra, that is, $\mathfrak{g}=\mathfrak{z}\oplus \mathfrak{s}$ where $\mathfrak{s}$ is semisimple. Accordingly, the adjoint bundle can be decomposed as
$\mathfrak{\tilde{g}}= \mathfrak{\tilde{z}}\oplus \mathfrak{\tilde{s}}$. If we replace $\mathfrak{\tilde{z}}$ by a non-trivial vector bundle $V\to M$ equipped with a trivial bracket, the bundle $TQ\oplus V\oplus\tilde{\mathfrak{s}}\to M$ does not belong to the subcategory $\mathfrak{RI}$ even though it is an element of $\mathfrak{LP}$ with a convenient choice of connection $\nabla$ and 2-form $\omega$.

\subsection{Lagrangian Depending on a Parameter}

Let $L:T(G\times Q)\times V^*\to \mathbb{R}$ be a Lagrangian function where $G$ is a Lie group, $Q$ is a manifold, and $V^*$ is the dual of a vector space $V$, for which the variable $ V^*$ is understood as a  parameter. More precisely, for each $a_0 \in V^*$, we are looking for curves $(g(t),q(t),a_0)$ in $G\times Q \times V^*$ which are critical points of the action
\begin{equation*}
\int_{t_0}^{t_1} L([g]^{(1)},[q]^{(1)},a_0) dt
\end{equation*}
with restrictions on variations given by $\delta g(t_i)=\delta q(t_i)=0$ for $i=0,1$ and $a_0$ fixed ($\delta a_0=0$). For the sake of simplicity, hereafter $[g]^{(1)}$ and $[q]^{(1)}$ will be respectively denoted $(g,\dot{g})$ and $(q,\dot{q})$. We note that we are in a similar setting to the semi-direct product Lagrangian in \cite{HMR98A}.

We consider a representation of $G$ on $V$ as well as the induced dual representation on $V^*$, so that $G$ acts on $T(G\times Q)\times V^*$ as follows
$$h(g,q,\dot{g},\dot{q},a_0)=(hg,q,h\dot{g},\dot{q},ha_0)$$
for all $h\in G$ and $(g,q,\dot{g},\dot{q},a_0)\in T(G\times Q)\times V^*$. We assume that $L$ is invariant with respect to that action and we define a reduced Lagrangian
\begin{equation*}
\begin{split}
l:\mathfrak{g}\times TQ\times V^*\to & \mathbb{R} \\
(\xi,q,\dot{q},a)\mapsto & l(\xi,q,\dot{q},a)=L(e,q,\xi,\dot{q},a)
\end{split}
\end{equation*}
Therefore, for all $g\in G$, all $q\in Q$ and all $a_0\in V^*$,
$$L(g,q,\dot{g},\dot{q},a_0)=l(\xi,q,\dot{q},a),$$
where $\xi=g^{-1}\dot{g}$ and $a=g^{-1}a_0$.

The reduction of the variational problem of $L$ defined above is as follows. Solutions of the reduced problem are curves $(\xi(t), q(t),a(t))$ in $\mathfrak{g}\times TQ\times V^*$, whith $\xi(t)=g^{-1}(t)\dot{g}(t)$ and $a(t)=g^{-1}(t)a_0$, that are critical elements for the action defined by $l$ with restricted variations
$$\delta\xi=\dot{\eta}+[\xi,\eta]$$ where $\eta$ is a curve in $\mathfrak{g}$ such that $\delta \eta(t_i)=0$ for $i=0,1$;
$\delta q(t_i)=0,$ for $i=0,1$; and
$$\delta a=- \eta_a^{V^*},$$ satisfying the additional condition \begin{equation}\label{paramred}
\dot{a}+ \eta_a^{V^*}=0
\end{equation} coming from $\dot{a_0}=0$. This equivalence of principles inmediately leads to equations
\begin{equation}\label{VerLPV}
-\frac{D}{dt}\left(\frac{\partial l}{\partial \xi}\right)+\text{ad}^*_{\xi}\frac{\partial l}{\partial \xi}+\frac{\partial l}{\partial a}\diamond a=0,
\end{equation}
\begin{equation}\label{HorLPV}
 \frac{\partial l}{\partial q}-\frac{D}{dt}\left(\frac{\partial l}{\partial \dot{q}}\right)=0,
\end{equation}
where we define
$(b\diamond a) (\eta)=-\langle \eta_a^{V^*}, b\rangle,$
for all $\eta\in\mathfrak{g},a\in V^*$, and $b\in V$. Thus, these Lagrange--Poincar\'e equations together with condition, $\dot{a}+\eta^{V^*}_a=0$ solve, directly from the variational principle, the problem of an invariant Lagrangian depending on a parameter.

However, it is interesting to obtain these equations from different perspectives. For example, in \cite{HMR98A}, Euler-Poincar\'e reduction is performed when $Q$ is a point and $L:TG \times V^*\to \mathbb{R}$.  In general, the equations can be also obtained combining Lagrange--Poincar\'e reduction and Lagrange multiplyers (see \cite{CMR} and \cite{CM87}) for Lagrangians
\begin{equation*}
\begin{split}
L^V:T(G\times Q\times V^* \times V)\to &\mathbb{R}\\
(g,q,a,b,\dot{g},\dot{q},\dot{a},\dot{b})\mapsto & L(g,q,\dot{g},\dot{q},a)+\langle\dot{a}+g^{-1}\dot{g}a,b \rangle .
\end{split}
\end{equation*}
We are going to give here a different approach within the Lagrange--Poincar\'e category $\mathfrak{LP}$.

For that, we define the Lagrangian
\begin{equation*}
\begin{split}
\bar{L}:T(G\times Q) \oplus \tilde{V}^* \oplus \tilde{V}\to &\mathbb{R}\\
(g,q,\dot{g},\dot{q},a_0,b_0)\mapsto & L(g,q,\dot{g},\dot{q},a_0)+\langle a_0,b_0 \rangle,
\end{split}
\end{equation*}
where $\tilde{V}=(G\times Q)\times V$ is a trivial vector bundle endowed with the correspondingly trivial connection, $\nabla$; and a trivial Lie bracket, $[\cdot,\cdot]=0$, in the fibers. Furthermore, $\tilde{V}^*$ accounts for the dual of this bundle similarly equipped with a trivial Lie bracket. This, together with the null $\tilde{V}^* \oplus \tilde{V}$-valued $2$-form on $G\times Q$ makes $T(G\times Q) \oplus \tilde{V}^* \oplus \tilde{V}$ a Lagrange--Poincar\'e bundle and, hence, there is a notion of Lagrange--Poincar\'e equations for $\bar{L}$.

Since the $2$-form of this $\mathfrak{LP}$-bundle is zero, the horizontal Lagrange--Poincar\'e equation of $\bar{L}$ depends only on horizontal derivatives of $\bar{L}$. These coincide with the horizontal derivatives of $L$, and consequently, the horizontal Lagrange--Poincar\'e equation of $\bar{L}$ coincide with the Euler--Lagrange equations for the original Lagrangian $L$. On the other part, there are two vertical Lagrange--Poincar\'e equations as $\tilde{V}^* \oplus \tilde{V}$ has two factors. The vertical equation coming from $\tilde{V}$ imposes that $a_0$ is a fixed parameter
\begin{equation*}
0=\frac{D}{dt}\left(\frac{\partial \bar{L}}{\partial b_0}\right)=\dot{a}_0,
\end{equation*}
while the vertical Lagrange--Poincar\'e equation coming from $\tilde{V}^*$ gives the evolution of the auxiliary variable $b_0$:
\begin{equation*}
0=\frac{D}{dt}\left(\frac{\partial \bar{L}}{\partial a_0}\right)=\frac{D}{dt}\left(\frac{\partial L}{\partial a_0}\right)+\dot{b}_0,
\end{equation*}

To reduce the Lagrange--Poincar\'e equations of $\bar{L}$ to equations \eqref{paramred}, \eqref{VerLPV}, and \eqref{HorLPV}, we first discuss the action of $G$ on $T(G\times Q) \oplus \tilde{V}^* \oplus \tilde{V}$ and the resulting quotient $\mathfrak{LP}$-bundle. Since $T(G\times Q) \oplus \tilde{V}^* \oplus \tilde{V}\cong T(G\times Q) \oplus (G\times Q\times V^*) \oplus (G\times Q\times V)$ the action of $h\in G$ can be explicited as
$$h\cdot\left( (g,q,\dot{g},\dot{q})\oplus(g,q,b_0)\oplus(g,q,a_0)\right) =(hg,q,h\dot{g},\dot{q})\oplus(hg,q,hb_0)\oplus(hg,q,ha_0).$$ On the other hand, we have the natural isomorphism for the quotient bundle
\begin{align*}
TQ \oplus \tilde{\mathfrak{g}} \oplus \tilde{V}^* /G \oplus \tilde{V} /G \hspace{2mm} \cong \hspace{2mm}& TQ \oplus (Q\times \mathfrak{g}) \oplus (Q\times V^*) \oplus (Q\times V) \\
(q,\dot{q})\oplus[(e,q),\xi]_G\oplus[g,q,b_0]_G\oplus[g,q,a_0]_G \longleftrightarrow & (q,\dot{q})\oplus(q,\xi)\oplus(q,b)\oplus (q,a),
\end{align*}
where $b=g^{-1}b_0$, and $a=g^{-1}a_0$.

The bundle $G \times Q \to Q$ is equipped with the trivial connection that induces in the adjoint bundle $\tilde{\mathfrak{g}} \cong Q\times \mathfrak{g}$ the trivial covariant derivative
$$\frac{D}{dt}(q(t),\xi(t))=(q(t),\dot{\xi}(t)).$$ This connection is flat, that is, $\tilde{B}=0$, and $Q\times \mathfrak{g}$ has a fiber-wise Lie bracket given by $[(q,\xi_1),(q,\xi_2)]=(q,[\xi_1,\xi_2])$. The factor $(Q\times V^*) \oplus (Q\times V)\cong Q\times V\times V^*$ has null Lie bracket and $(Q\times V\times V^*)$-valued 2 form on $Q$ coming from the respective structures in $\tilde{V}^* \oplus \tilde{V}$. However, reducing the trivial connection of $\tilde{V}^* \oplus \tilde{V}$,
$$\frac{D}{dt}(g(t),q(t),a_0(t),b_0(t))=(g(t),q(t),\dot{a}_0(t),\dot{b}_0(t)).$$
to a connection in $\tilde{V}^* /G \oplus \tilde{V} /G \cong (Q\times V^*) \oplus (Q\times V)$ is somewhat trickier: It requires to separate the horizontal and vertical component as explained in section \ref{Sect.LPcategory}. The explicit calculation of the vertical component of the covariant derivative of a curve $v(t)=(g(t),q(t),a_0(t),b_0(t))$ in $\tilde{V}^* \oplus \tilde{V}$ is done as in \cite{CMR}. For a fixed $t_0$, $\tau(v(t))=(g(t),q(t))=h(t)(g_0,q(t))$ where $g_0=g(t_0)$ and $h(t)=g(t)g_0^{-1}$, then the horizontal component of $Dv(t)/dt$ is
\begin{align*}
\left. \frac{D^H}{dt}\right|_{t=t_0}v(t)=\left. \frac{D}{dt}\right|_{t=t_0}(h(t)^{-1}v(t))=\left. \frac{D}{dt}\right|_{t=t_0}(g_0,q(t),g_0a(t),g_0b(t))=\\
=(g_0,q(t),g_0\dot{a}(t_0),g_0\dot{b}(t_0))=(g(t),q(t),g(t)\dot{a}(t),g(t)\dot{b}(t))_{\vert t=t_0}.
\end{align*}
Differenciating $a=g^{-1}a_0$, we obtain $\dot{a}+\xi^{V^*}_a=g^{-1}\dot{a_0}$, and consequently,
\begin{align*}
\frac{D^V}{dt}v(t)=\frac{D}{dt}v(t)-\frac{D^H}{dt}v(t)&=
(g(t),q(t),\dot{a_0}(t)-g(t)\dot{a}(t),\dot{b_0}(t)-g(t)\dot{b}(t))\\&=(g(t),q(t),g(t)\xi^V_{a(t)},g(t)\xi^{V^*}_{b(t)}).
\end{align*}
It follows that for a section $[v]_G=(q,a(q),b(q))$ of $Q\times V\times V^*\to Q$, the horizontal quotient connection is
$$\left[ \nabla^{(H)}\right] _{G,(q,\dot{q},\xi)}[v]_G=(q,\dot{q}[a],\dot{q}[b]),$$
and the vertical quotient connection is
$$\left[ \nabla^{(V)}\right] _{G,(q,\dot{q},\xi)}[v]_G=(q,\xi^V_{a(q)},\xi^V_{b(q)}).$$
Direct application of Theorem \ref{quotientLP} gives the $\mathfrak{LP}$-bundle structure on the quotient bundle, $$TQ \oplus (Q\times \mathfrak{g}) \oplus (Q\times V^*) \oplus (Q\times V)\cong TQ \oplus (Q\times \mathfrak{g} \times V^*\times V));$$
determined by
$$\nabla^{\tilde{\mathfrak{g}}}_{(q,\dot{q})}(q,\xi, a, b)=\nabla^{A}_{(q,\dot{q})}(q,\xi)\oplus\left[\nabla^{(H)}\right]_{G,(q,\dot{q})}(q,a,b)=(q,\dot{q}[\xi], \dot{q}[a], \dot{q}[b]);$$
$$\omega^{\tilde{\mathfrak{g}}}=\tilde{B}\oplus [\omega]_G=0;$$
$$[(q,\xi_1, a_1, b_1),(q,\xi_2, a_2, b_2)]^{\tilde{\mathfrak{g}}}=(q,[\xi_1,\xi_2],(\xi_1)^{V^*}_{a_2}-(\xi_2)^{V^*}_{a_1},(\xi_1)^V_{b_2}-(\xi_2)^V_{b_1}).$$

Finally, since $L$ is $G$-invariant and $\langle ga_0,gb_0\rangle=\langle a_0,b_0\rangle$, $\bar{L}$ is $G$-invariant and the reduced Lagrangian is
\begin{align*}
\bar{l}:TQ \oplus (Q\times \mathfrak{g} \times V^*\times V))\to &\mathbb{R}\\
(q,\dot{q},\xi,a,b)\mapsto & l(q,\dot{q},\xi,a)+\langle a,b \rangle.
\end{align*}
Its horizontal Lagrange--Poincar\'e equation is
\begin{equation*}
0=\frac{D}{dt}\left(\frac{\partial \bar{l}}{\partial \dot{q}}\right)-\frac{\partial \bar{l}}{\partial q}=\frac{D}{dt}\left(\frac{\partial l}{\partial \dot{q}}\right)-\frac{\partial l}{\partial q},
\end{equation*}
which coincides with equation \eqref{HorLPV}, while its vertical Lagrange--Poincar\'e equation is
\begin{equation*}0=-\frac{D}{dt}\left(\frac{\partial \bar{l}}{\partial (\xi,a,b)}\right)+\text{ad}_{(\xi,a,b)}\left(\frac{\partial \bar{l}}{\partial (\xi,a,b)}\right)
\end{equation*}
Applying this expression to any variation $(\delta\xi,\delta a,\delta b)\in \mathfrak{g}\times V^*\times V$ we have
\begin{align*}
0=&\left\langle -\frac{D}{dt}\left(\frac{\partial \bar{l}}{\partial (\xi,a,b)}\right)+\text{ad}_{(\xi,a,b)}\left(\frac{\partial \bar{l}}{\partial (\xi,a,b)}\right);\delta\xi,\delta a,\delta b\right\rangle \\
=& -\left\langle\frac{D}{dt}\left(\frac{\partial \bar{l}}{\partial \xi}\right),\delta\xi\right\rangle -\left\langle \frac{D}{dt}\left(\frac{\partial \bar{l}}{\partial a}\right),\delta a\right\rangle -\left\langle \frac{D}{dt}\left(\frac{\partial \bar{l}}{\partial b}\right),\delta b\right\rangle \\
&+\left\langle \frac{\partial \bar{l}}{\partial (\xi,a,b)};[\xi,\delta\xi],(\xi)^{V^*}_{\delta a}-(\delta\xi)^{V^*}_{a},(\xi)^V_{\delta b}-(\delta \xi)^V_{b}\right\rangle \\
=&-\left\langle\frac{D}{dt}\left(\frac{\partial l}{\partial \xi}\right),\delta\xi\right\rangle-\left\langle \frac{D}{dt}\left(\frac{\partial l}{\partial a}\right),\delta a\right\rangle-\left\langle \dot{b},\delta a\right\rangle-\left\langle \frac{D}{dt}\left(\frac{\partial l}{\partial b}\right),\delta b\right\rangle \\
&+\left\langle \frac{\partial l}{\partial \xi},[\xi,\delta\xi]\right\rangle +\left\langle \frac{\partial l}{\partial a},(\xi)^{V^*}_{\delta a}-(\delta\xi)^{V^*}_{a}\right\rangle \\
&+\left\langle b,(\xi)^{V^*}_{\delta a}-(\delta\xi)^{V^*}_{a}\right\rangle +\left\langle \frac{\partial l}{\partial b},(\xi)^V_{\delta b}-(\delta \xi)^V_{b}\right\rangle \\
=&\left\langle -\frac{D}{dt}\left(\frac{\partial l}{\partial \xi}\right)+\text{ad}_{\xi}\left(\frac{\partial l}{\partial \xi}\right)-\frac{\partial l}{\partial a}\diamond a,\delta\xi\right\rangle \\
&+\left\langle -\frac{D}{dt}\left(\frac{\partial l}{\partial a}\right)-\dot{b}-\xi_{\frac{\partial l}{\partial a}}^V-\xi_{b}^V,\delta a\right\rangle -\left\langle \dot{a}+\xi_{a}^V,\delta b\right\rangle
\end{align*}
where it has been repeatedly used that $\langle a,\xi^V_b\rangle+\langle\xi^{V^*}_a,b\rangle=0$ for all $\xi\in \mathfrak{g}$ and the abuse of notation $\langle a,b\rangle=\langle b,a\rangle$. As the variations $\delta\xi,\delta a,\delta b$ are free, we recover \eqref{paramred}, \eqref{VerLPV} as well as the evolution of the auxiliary variable $b$.


\begin{thebibliography}{9}

\bibitem{abraham}
R. Abraham, J.E. Marsden, \emph{Foundations of Mechanics}. Benjamin/Cummings Publishing, Advanced
Book Program, Reading (1978).

\bibitem{arnold}
V.I. Arnold, \emph{Mathematical Methods of Classical Mechanics}, 2nd edn. Graduate Texts in Mathematics,
vol. 60. Springer, New York (1989).

\bibitem{Bl}  A. Bloch, L. Colombo, F. Jim\'enez, \emph{The variational discretization of the constrained higher-order Lagrange-Poincar\'e equations}, Discrete Contin. Dyn. Syst. {\bf 39} (2019), no. 1, 309–-344.

\bibitem{marco}

M. Castrill\'on L\'opez, P. L. Garc\'ia P\'erez, T.S. Ratiu: \emph{Euler-Poincar\'e Reduction on Principal Bundles}.  Lett. Math. Phys {\bf58} (2001), 167--180.
\bibitem{CM87} H. Cendra, J.E. Marsden, \emph{Lin constraints, Clebsch potentials and variational principles}%
, Physica D. \textbf{27} (1987), 63-89.

\bibitem{moscow} H. Cendra, J.E. Marsden, S. Pekarsky, T.S. Ratiu, \emph{Variational principles for Lie-Poisson
and Hamilton-Poincar\'e equations}, Moscow Mathematical Journal .3 \textbf{3} (2003), 833--867.

\bibitem{CMR} H. Cendra, J.E. Marsden, T.S. Ratiu, \emph{Lagrangian reduction by stages}, Mem. Amer. Math. Soc. \textbf{152} (2001), no. 722.

\bibitem{Fer} J. Fern\'andez, C. Tori, M. Zuccalli, \emph{Lagrangian reduction of discrete mechanical systems by stages}, J. Geom. Mech. {\bf 8} (2016), no. 1, 35–70. 

\bibitem{Gay} F. Gay-Balmaz, M. Monastyrsky, T.S. Ratiu, \emph{Lagrangian reductions and integrable systems in condensed matter}, Comm. Math. Phys. {\bf 335} (2015), no. 2, 609–636.

\bibitem{GS} S.D. Grillo, L.M. Salomone, M. Zuccalli, \emph{Variational reduction of Hamiltonian systems with general constraints}, J. Geom. Phys. {\bf 144} (2019), 209–234.

\bibitem{LM} N.E. Leonard, J. E. Marsden, \emph{Stability and drift of underwater vehicledynamics:  mechanical  systems  with  rigid  motion  symmetry}, Physica D {\bf 105} (1997), 130-–162.

\bibitem{HMR98A} D.D. Holm, J.E. Marsden, T.S. Ratiu, \emph{The Euler-Poincar\'e equations and semidirect products with applications to continuum theories}, Adv. in Math. \textbf{137} (1998), 1--8.

\bibitem{RedstagesPoisson} J.E. Marsden, G. Misiolek,  J.P. Ortega, M. Perlmutter, T.S. Ratiu, \emph{Hamiltonian reduction by stages}%
 Lecture Notes in Mathematics \textbf{1913}, Springer (2007).
 
\bibitem{MRbook}
J.E. Marsden, T.S. Ratiu, \emph{Introduction to Mechanics and Symmetry: A Basic Exposition of Classical
Mechanical Systems}, 2nd edn. Texts in Applied Mathematics, vol. 17. Springer, New York (1999).

\bibitem{algebroids2017} S. Li, A. Stern, X. Tang, \emph{Lagrangian mechanics and reduction
on fibered manifolds},SIGMA Symmetry Integrability Geom. Methods Appl.\textbf{13} (2017).



\end{thebibliography}
\end{document}